\newtheorem{thm}{Theorem}
\newtheorem{prop}[thm]{Proposition}
\newtheorem{lem}[thm]{Lemma}
\newtheorem{cor}[thm]{Corollary}
\newtheorem*{thmnn}{Theorem}
\theoremstyle{definition}
\newtheorem{defn}[thm]{Definition}
\newtheorem{rmk}[thm]{Remark}
\begin{document}

\title{Polynomial Divided Difference Operators Satisfying the Braid Relations}

\author{Shaul Zemel}

\maketitle


\section*{Introduction}

Many interesting families of polynomials are indexed by permutations, and are constructed using divided difference operators and their generalizations. The most classical one is the Schubert polynomials, as introduced in \cite{[LS1]}, and their doubled version from \cite{[Mac]}. The former show up naturally in the cohomology theory of flag varieties, see, e.g., \cite{[FP]}, yields the quantum generalization from \cite{[FGP]}, and the universal one from \cite{[F]} (the quantum and universal Schubert polynomials cannot be constructed using the usual divided difference operators, and require variants which are not considered in the current paper, but their double version of the former can be obtained in this way by acting on the second set of variables---see Appendix J of \cite{[FP]}). A recent generalization is the twisted Schubert polynomials from \cite{[Li]}, which is constructed using modified divided difference operators, in the form considered here.

Another generalized version of the Schubert polynomials consists of the Grothendieck polynomials, originally defined in \cite{[LS2]} (see also \cite{[La1]}). They can also be constructed using divided difference operators, but their operation is combined with multiplication by additional polynomials. They too have a double version, as well as a quantum version (see \cite{[LM]}) and a universal one (as in \cite{[BKTY]}). We also mention the Grothendieck $H$-polynomials from \cite{[LS3]}, which admit a similar definition. Note that like the Stanley symmetric functions (originally defined in \cite{[S]}) show up as a stable limit of the Schubert polynomials, there are also stable Grothendieck polynomials (defined in \cite{[FK1]}), and when the permutation has only one descent, so that the Schubert polynomial is a Schur polynomial (and the Stanley symmetric function is a Schur symmetric function), the stable Grothendieck polynomials admit duals, as considered in \cite{[LP]}. Generalizations of the latter are investigated in \cite{[Ye]}.

The Schubert and Grothendieck polynomials associated with a permutation $w$, with their doubled versions, are the generating functions, or partition functions with appropriate weights, of many interesting objects. To name a few, we mention on one side pipe dreams (or RC-graphs, or compatible sequences), as in \cite{[FK3]}, \cite{[BJS]}, \cite{[BB]}, and \cite{[FS]} (see also \cite{[KM]}, \cite{[Kn]}, and the appendix of \cite{[Hud]}), and on the other hand bumpless pipe dreams---see \cite{[LLS]}, \cite{[W]}, and \cite{[Hua]}. The resulting matching between the two objects is far from obvious, and was completed in the reduced (or Schubert) case in \cite{[GH]}. There is also a lattice model interpretation as given in \cite{[BS]}. The recent preprint \cite{[LOTRZ]} establishes a bumpless pipe dream model also for the quantum Grothendieck polynomials. We also mention a further generalization of both the Grothendieck polynomials and their $H$-versions, namely the biaxial double Grothendieck polynomials of \cite{[BFHTW]}, involving a more complicated modification of the divided difference operators, which are also shown there to be the partition functions of appropriate colored lattice models (yet another generalization is defined in \cite{[GZJ]}).

\medskip

Another family of polynomials, whose indices are compositions but which are constructed similarly using divided difference operators (with polynomials) via the permutation yielding the composition from the associated partition, is the family of key polynomials. They were defined in \cite{[D1]} and \cite{[D2]}, and they are constructed using the Demazure atoms from \cite{[LS4]} (defined by yet another modification of the divided difference operators), which also related them to Young tableaux. Many properties of these polynomials are presented in the thesis \cite{[P]}. Other related objects are skyline fillings (see \cite{[Mas1]} and \cite{[Mas2]}) and Gelfand--Tsetlin polytopes (as in \cite{[KST]}). In fact, the key polynomials and the Demazure atoms are connected as special cases of the non-symmetric Macdonald polynomials (see \cite{[AS]} and the papers cited therein).

The same type of generalization that produces the Grothendieck polynomials from the Schubert polynomials yields, when applied to the key polynomials and Demazure atoms, the Lascoux polynomials from \cite{[La2]} and the Lascoux atoms defined in \cite{[Mo]} respectively. The divided difference operators in this case involve quadratic polynomials. They admit, by \cite{[BSW]}, associated colored lattice models, and \cite{[Yu]} relates them to set-valued tableaux. A conjecture relating the Lascoux polynomials to the Grothendieck polynomials was posed in \cite{[RY]} and answered soon after in \cite{[SY]}.

\medskip

All these interesting families of polynomials are based, in their definitions via the divided difference operators and their modifications, on the braid relations required for making well-defined indices in terms of permutations. The closely related question, of operators satisfying the Yang--Baxter equation, more specifically in the exponential setting, was considered in \cite{[FK1]}, \cite{[FK2]}, and \cite{[FK3]}. The paper \cite{[Ki]} introduces more general divided difference operators, and considers more general families of operators and polynomials, including those mentioned above, the ones from \cite{[dFZJ]}, and others. The most general family of operators there involves five parameters, which must satisfy a quadratic equation in order for a relation similar to the braid relation to hold (see Lemma 4.14 of that reference).

\medskip

This motivates the question---which modifications of the divided difference operators by arbitrary polynomials satisfy the braid relations, in order to be possibly used for defining families of polynomials with indices that are permutations? This paper answers this question in full. Under a certain condition of non-degeneracy, the family from \cite{[Ki]} is almost the only possible choice, though there are a few smaller families of possible operators. 

\medskip

We will now explain the notions and results of this paper in more detail. For a function $f$ of variables including $x_{i}$ and $x_{i+1}$, the $i$th transposition operator $s_{i}$ takes $f$ to the function $s_{i}f$ obtained by interchanging the values of $x_{i}$ and $x_{i+1}$. Recall the \emph{divided difference operator} $\partial_{i}$, taking such a function $f$ to $\frac{f-s_{i}f}{x_{i}-x_{i+1}}$. While the fact that $f=s_{i}f$ at points where $x_{i}=x_{i+1}$ makes the expression $\partial_{i}f$ for a general smooth function $f$ well-defined at such points by taking limits and derivatives, in most applications $f$ is a polynomial, and then this property implies that $\partial_{i}f$ is also a polynomial. Moreover, as the quotient of two expressions that are anti-symmetric with respect to interchanging $x_{i}$ and $x_{i+1}$, the dependence of the function $\partial_{i}f$ on these two variables is symmetric.

Fix now some $n\geq3$, and consider functions $f$ that depend, among perhaps some extra parameters, on the variables $\{x_{i}\}_{i=1}^{n}$. On any such function $f$ we have an action of every operator $\partial_{i}$ with $1 \leq i \leq n-1$. It then well-known, and straightforward to check (and will also follow from the calculations carried out in this paper), that the operators $\{\partial_{i}\}_{i=1}^{n-1}$ satisfy the \emph{braid relations}
\begin{equation}
\partial_{i}\partial_{k}=\partial_{k}\partial_{i}\mathrm{\ if\ }|k-i|\geq2\qquad\mathrm{and}\qquad\partial_{i}\partial_{i+1}\partial_{i}=\partial_{i+1}\partial_{i}\partial_{i+1}\mathrm{\ for\ }1 \leq i \leq n-2. \label{braid}
\end{equation}
We call the first relations in Equation \eqref{braid} the \emph{quadratic braid relations}, and the second ones the \emph{cubic braid relations}. Of course, for $n=1$ the family is empty and for $n=2$ no relations are imposed, so we assume $n\geq3$ throughout.

\medskip

For various applications a modification of $\partial_{i}$ is required, usually involving combining the action on $\partial_{i}$ with multiplication by a polynomial in $x_{i}$ and $x_{i+1}$. We gather these operations, recall that $f$ and $s_{i}f$ are involved in the definition of such an operator on a function $f$, and we define in general a \emph{polynomial divided difference operator} to be an operator of the form
\begin{equation}
\pi_{i}:f\mapsto\partial_{i}\big(P(x_{i},x_{i+1})f\big)+Q(x_{i},x_{i+1})\partial_{i}f+R(x_{i},x_{i+1})f+S(x_{i},x_{i+1})s_{i}f. \label{poldifop}
\end{equation}
We allow $P$, $Q$, $R$, and $S$ in Equation \eqref{poldifop} to depend on additional parameters that are not the $x_{i}$'s (as can the function $f$ on which they operate), but for fixed $i$ the only dependence of the expression in that equation on $x_{j}$ with $j$ that equals neither $i$ nor $i+1$ is through $f$.

The most interesting case is that of the operators which are \emph{non-degenerate}, in the sense that the action of $\pi_{i}$ on $f$ is not just a multiple of $f$ or of $s_{i}f$ by some polynomial (see Remark \ref{degnondeg} for the precise definition). As a consequence of our results, we obtain a motivating characterization for the general family of operators from \cite{[Ki]}, as follows.
\begin{thmnn}
Given $n\geq3$, assume that $\{\pi_{i}\}_{i=1}^{n-1}$ is a family of non-degenerate polynomial divided difference operators as in Equation \eqref{poldifop}, with $R=S=0$ for each $i$ and $P$ and $Q$ being the same polynomial for all $i$, for which the braid relations hold. Then there are five constants $a$, $b$, $c$, $d$, and $e$ such that the first four are not all zero and satisfy $ad-bc=0$, such that for every $1 \leq i \leq n-1$ and function $f$ we have \[\pi_{i}f=(b-c-e)\partial_{i}(x_{i}f)+[ax_{i}x_{i+1}+(c+e)x_{i}+cx_{i+1}+d]\partial_{i}f.\]
\end{thmnn}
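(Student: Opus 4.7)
The plan is to analyze the cubic braid relation $\pi_{i}\pi_{i+1}\pi_{i}=\pi_{i+1}\pi_{i}\pi_{i+1}$, since under the hypotheses of the theorem the quadratic braid relations $\pi_{i}\pi_{k}=\pi_{k}\pi_{i}$ for $|k-i|\geq2$ are automatic: the multiplication operators by $P$ and $Q$ act on disjoint pairs of variables, and $\partial_{i}$ commutes with $\partial_{k}$ in this range. The entire weight of the hypothesis is therefore concentrated in the cubic relation, together with non-degeneracy.

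The first simplification is to rewrite the operator using the Leibniz rule $\partial_{i}(Pf)=(\partial_{i}P)f+(s_{i}P)\partial_{i}f$, yielding $\pi_{i}=A+B\partial_{i}$ with $A:=\partial_{i}P$, which is automatically symmetric in $x_{i}$ and $x_{i+1}$, and $B:=s_{i}P+Q$. Since adding a polynomial symmetric in $x_{i},x_{i+1}$ to $P$ and subtracting it from $Q$ leaves $\pi_{i}$ unchanged, the operator is in fact determined by the pair $(A,B)$ with $A$ symmetric, and the problem becomes the classification of such pairs for which the cubic braid relation holds.

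The main step is to expand each side of the cubic braid relation in the operator basis $\{1,\partial_{i},\partial_{i+1},\partial_{i}\partial_{i+1},\partial_{i+1}\partial_{i},\partial_{i}\partial_{i+1}\partial_{i}\}$, using $\partial_{i}^{2}=0$, the Leibniz rule, and the braid identity for $\partial$ itself to reduce everything to canonical form. Matching the six coefficients yields a system of polynomial identities in $x_{i},x_{i+1},x_{i+2}$ that $A$ and $B$, regarded as polynomials in two adjacent variables, must satisfy. A degree-by-degree inspection of that system should force $A$ to be a scalar constant and $B$ to be biaffine, of the form $ax_{i}x_{i+1}+\beta x_{i}+\gamma x_{i+1}+d$, subject to one quadratic relation among the remaining parameters. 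Under the reparametrization $A=b-c-e$, $\beta=c+e$, $\gamma=b-e$ this relation reads $(A+\beta)(\gamma-A)=ad$, which is exactly $ad-bc=0$.

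The main obstacle is the combinatorial bookkeeping of the cubic braid expansion: each composition produces interleaved factors like $B_{i}(s_{i}B_{i+1})(s_{i}s_{i+1}A_{i})$ whose clean reduction to canonical form, and comparison across the two sides, is delicate. The non-degeneracy hypothesis enters only at the end, to discard the trivial branch $B=0$ (in which $\pi_{i}$ is pure multiplication by $A$) and other branches where $A$ and $B$ conspire to make $\pi_{i}f$ a polynomial multiple of $f$ or of $s_{i}f$; these would otherwise enlarge the family beyond the stated five-parameter one.
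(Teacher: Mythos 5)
Your setup is sound and runs parallel to the paper's: the pair $(A,B)=(\partial_{i}P,\,s_{i}P+Q)$ is exactly the first canonical form $(R_{0},Q_{0})$ of Proposition \ref{canform}, the quadratic relations are indeed automatic (Proposition \ref{quadcom}), and your reparametrization check $(A+\beta)(\gamma-A)=bc$ is consistent with the paper's $T=Q_{0}+(x_{i}-x_{i+1})R_{0}=ax_{i}x_{i+1}+bx_{i}+cx_{i+1}+d$ and $ad=bc$. Your choice of normal form differs from the paper's: you expand in the left module basis of reduced $\partial$-words (keeping polynomial coefficients, at the price of repeated Leibniz pushes and of justifying linear independence of the six words), whereas the paper rewrites $\pi_{i}f=\frac{T f-Q_{0}s_{i}f}{x_{i}-x_{i+1}}$ and expands in the six permutation images of $f$ (Lemma \ref{expansions}), where independence is checked with a test monomial. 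The two are equivalent, and either yields a workable system.

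The genuine gap is that the entire deductive core is compressed into ``a degree-by-degree inspection of that system should force $A$ to be a scalar constant and $B$ to be biaffine.'' That is precisely what has to be proved, and it is not a routine inspection. The paper needs: (i) a separation-of-variables argument, applied twice, in which a polynomial symmetric in $x_{i},x_{i+1}$ and independent of $x_{i+2}$ is equated with one symmetric in $x_{i+1},x_{i+2}$ and independent of $x_{i}$, forcing both to be constants --- this is what makes $\partial_{i}T$ constant and, in your special case $R_{+}=0$, what turns the $f$-coefficient identity into $A^{2}+(c-b)A=\nu$ and hence $A$ constant; (ii) a variable-degree bound on the symmetric part of $T$ extracted from Equation \eqref{relswithT} to conclude that $T$ is biaffine (Proposition \ref{sfsigmaf}); and (iii) the explicit computation showing that \eqref{relswithT} is then equivalent to $ad=bc$ (Lemma \ref{coeffT}). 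None of these steps appears in your outline. A secondary inaccuracy: non-degeneracy does not enter ``only at the end'' to discard branches; the non-vanishing of $Q_{0}$ and of $T$ is used repeatedly in the middle of the derivation to cancel them as common factors from the coefficient identities (e.g.\ cancelling $T_{xz}$ in \eqref{relswithT} and $Q^{0}_{xy}$, $\widetilde{Q}^{0}_{yz}$ from the $s_{i}f$- and $s_{i+1}f$-coefficients), without which the system does not reduce to the equations you need. You do benefit from one real simplification the proposal does not mention: since $R=S=0$ makes $R_{0}=\partial_{i}P$ symmetric, its $\partial_{i}$-positive part vanishes, so the hardest ingredient of the general classification (Lemma \ref{vanprod} and the bulk of Proposition \ref{lastcoeff}) is not needed here.
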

(note that our choice of parametrization differs slightly from that of \cite{[Ki]}, but is more convenient for our analysis).

Recall that the \emph{Hecke algebra} $\mathcal{H}_{\mu,\nu}$ with parameters $\mu$ and $\nu$, that is associated with $S_{n}$ (or the root system $A_{n}$) is generated by elements $\{u_{i}\}_{i=1}^{n-1}$ that satisfy the braid relations from Equation \eqref{braid}, plus the quadratic relation $u_{i}^{2}=\mu u_{i}+\nu$ for every $i$. Such an algebra is called \emph{double affine nilCoxeter algebra of type A} in \cite{[Ki]}, and generalizes the notions of the nilCoxeter and Demazure algebras from other references mentioned above. The family of operators from the last theorem form a Hecke algebra $\mathcal{H}_{b-c,e(e+c-b)}$.

\medskip

The results in this paper are more general, and for describing them in more detail we need some notions. First, the form of the operator in Equation \eqref{poldifop} is not unique, and one may wish to normalize it in a more canonical form. We describe in Proposition \ref{canform} three ways to do it, two of whose require a certain ``canonical lift'' of a polynomial that is symmetric in $x_{i}$ and $x_{i+1}$ to an appropriate pre-image under $\partial_{i}$, which we call \emph{$\partial_{i}$-positive} (see Definition \ref{dipos}). This produces two canonical polynomials associated with each such operator, namely $Q_{0}$ from Proposition \ref{canform} and $T$ from Equation \eqref{Tpol}, which for the operators from the theorem mentioned above are \[ax_{i}x_{i+1}+(c+e)x_{i}+(b-e)x_{i+1}+d\quad\mathrm{and}\quad ax_{i}x_{i+1}+bx_{i}+cx_{i+1}+d\] (the latter independently of $e$) respectively. The non-degeneracy assumption is equivalent to neither $Q_{0}$ nor $T$ vanishing.

In the most general setting, we do not require that the operators from Equation \eqref{poldifop} are the same for all $i$. However, under the non-degeneracy assumption, the braid relations imply that the polynomial $T$ must be the same for all $i$, and for $Q_{0}$ this is almost correct, namely the polynomials $Q_{0}$ associated with different indices $i$ in such a family are ``the same up to replacing one variable in a univariate divisor of the polynomial by the other variable'' (see Definition \ref{almosteq} for the precise notion). Back to our family of polynomials, $T$ is always a product of two non-zero linear (or constant) polynomials, one in each variable (by the equality $ad=bc$), and $Q_{0}$ is also such when $e=0$ or $e=b-c$, but is irreducible and contains both variables otherwise. Hence for a generic value of $e$ the conditions that all the operators are based on the same polynomials becomes redundant, but the operators with $e$-values 0 and $b-c$ are interchangeable, and they are also interchangeable with the operators taking $f$ to \[\partial_{i}[(ax_{i}^{2}+bx_{i})f]+(cx_{i+1}+d)\partial_{i}f-ax_{i}f\mathrm{\ or\ }\partial_{i}[(cx_{i+1}+d)f]+(ax_{i}^{2}+bx_{i})\partial_{i}f-ax_{i}f.\] Note that the interchangeable options show up precisely when the parameter $\nu=e(e+c-b)$ of the Hecke algebra structure vanishes.

\medskip

We also determine the families satisfying the braid relations in which some of the operators may be degenerate (but not 0, for trivial reasons). First we show that if the $T$ polynomial vanishes for one operator then it does for all of them, and the families arising in this way are precisely those in which every $\pi$ is the transposition operator $s_{i}$ multiplied by a polynomial $S_{i}$, and the $S_{i}$'s are all almost equal (see Proposition \ref{degenT}). When the $Q_{0}$-ones are allowed to vanish, the $T$-polynomials cannot vanish, and the set of indices $i$ for which $\pi_{i}$ is non-degenerate is divided into a separated collection of consecutive intervals and isolated operators. We prove the following result.
\begin{thmnn}
Assume that $n\geq4$, and that some of the $\pi_{i}$'s have vanishing $Q_{0}$ polynomials. Then all the non-degenerate $\pi_{i}$'s are $\mu\operatorname{Id}$ for some non-zero constant $\mu$, the consecutive intervals are as in the non-degenerate case with a Hecke algebra $\mathcal{H}_{\mu,0}$ (for some smaller value of $n$), and the isolated operators are of the form $f\mapsto\varphi_{i}\partial_{i}(\psi_{i}f)$ where $\partial_{i}(\varphi_{i}\psi_{i})=\mu$.
\end{thmnn}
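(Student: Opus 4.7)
Let $D=\{i:Q_{0}^{(i)}=0\}$ and $N=\{1,\ldots,n-1\}\setminus D$; by hypothesis $D\neq\emptyset$ and $T^{(i)}\neq 0$ throughout. The plan is to first pin down the shape of each $\pi_{i}$ with $i\in D$, then to use the cubic braid relation to scalarize its non-degenerate neighbors, propagate the scalar to a single common $\mu$, and finally to read off the identity $\partial_{i}(\varphi_{i}\psi_{i})=\mu$.

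For the first part I would invoke Proposition \ref{canform}: combined with $Q_{0}=0$ and $T\neq 0$, the canonical form forces $\pi_{i}f=\varphi_{i}\partial_{i}(\psi_{i}f)$ for each $i\in D$, with polynomials $\varphi_{i},\psi_{i}$ in $x_{i},x_{i+1}$. I would then record, for later use, the relation $\pi_{i}^{2}=\partial_{i}(\varphi_{i}\psi_{i})\cdot\pi_{i}$: since $\partial_{i}(\psi_{i}f)$ is $s_{i}$-symmetric, the Leibniz rule together with $\partial_{i}^{2}=0$ give $\partial_{i}\big(\psi_{i}\varphi_{i}\cdot\partial_{i}(\psi_{i}f)\big)=\partial_{i}(\varphi_{i}\psi_{i})\cdot\partial_{i}(\psi_{i}f)$, and multiplication by $\varphi_{i}$ yields the claim.

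The hard part is the next step, in which I take $i\in D$ and an adjacent $j=i\pm 1\in N$ and expand the cubic braid relation $\pi_{i}\pi_{j}\pi_{i}=\pi_{j}\pi_{i}\pi_{j}$ using the rigid form of $\pi_{i}$ above and the canonical form of $\pi_{j}$ from Equation \eqref{poldifop}. Matching both sides in a monomial basis built from $f,\partial_{i}f,\partial_{j}f,s_{i}s_{j}f$ and their companions, with polynomial coefficients restricted to the two-variable rings prescribed by the canonical form, I expect to force each of $P_{j},Q_{j},R_{j},S_{j}$ to be constant, giving $\pi_{j}=\mu_{j}\operatorname{Id}$ for some scalar (non-zero by non-degeneracy of $\pi_{j}$). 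The hypothesis $n\geq 4$ is most naturally used here, through the availability of a third variable $x_{i+2}$ or $x_{i-1}$ that separates enough independent monomials to kill all non-constant coefficients of $\pi_{j}$.

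The remaining parts are propagation. For each maximal consecutive interval $[a,b]\subseteq N$, the non-degenerate theorem already proved applies to the sub-family indexed by $[a,b]$ (a smaller-$n$ case) and forces a common Hecke-algebra structure; combined with the scalar values produced at the endpoints by the previous step, every $\pi_{j}$ in $[a,b]$ equals the same $\mu\operatorname{Id}$, and the Hecke relation $(\mu\operatorname{Id})^{2}=\mu\cdot(\mu\operatorname{Id})+0$ identifies the algebra as $\mathcal{H}_{\mu,0}$. Independence of $\mu$ across different intervals follows by chaining through the intermediate degenerate operators, each of which receives both neighboring scalars via its cubic braid. Finally, for each $i\in D$ with a non-degenerate neighbor $j$, the braid relation reduces to $\mu\pi_{i}^{2}=\mu^{2}\pi_{i}$, whence $\pi_{i}^{2}=\mu\pi_{i}$, and the identity from the second paragraph gives $\partial_{i}(\varphi_{i}\psi_{i})=\mu$; when an $i\in D$ has only degenerate neighbors, the same conclusion is reached by matching the cubic braid between two adjacent degenerate operators and invoking the common $\mu$ just established.
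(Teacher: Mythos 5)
There is a genuine gap, and it starts at the very first step: the roles of the degenerate and non-degenerate operators are swapped. If $Q_{0}=0$ then the canonical form of Proposition \ref{canform} (see Remark \ref{degnondeg}) gives $\pi_{i}f=R_{0}\cdot f$, a pure multiplication operator --- not $\varphi_{i}\partial_{i}(\psi_{i}f)$. An operator of the form $f\mapsto\varphi\,\partial_{i}(\psi f)$ has $Q_{0}=\varphi\cdot s_{i}\psi$ and $T=\varphi\psi$, both non-zero whenever the operator is, so these are precisely the \emph{non-degenerate} operators that appear at isolated indices adjacent to the degenerate ones; your identity $\pi^{2}=\partial_{i}(\varphi_{i}\psi_{i})\pi$ is correct but applies to them, not to the $i\in D$. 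The inversion makes the ``hard part'' internally inconsistent: you aim to show that the non-degenerate neighbors $\pi_{j}$, $j\in N$, equal $\mu_{j}\operatorname{Id}$, but $\mu\operatorname{Id}$ has $Q_{0}=\mu x_{j}-x_{j}\mu=0$ and so would lie in $D$. In the paper it is the operators with $Q_{0}=0$ that are forced to be $\mu\operatorname{Id}$, while their non-degenerate neighbors are either members of intervals governed by Case (2) of Theorem \ref{main} or isolated operators of the form $\varphi\,\partial_{i}(\psi\,\cdot)$.

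The second gap is that a single cubic braid relation with one neighbor does \emph{not} scalarize a degenerate operator, so the mechanism you propose for using $n\geq4$ would fail. Lemma \ref{Q0univ} only shows that the multiplier $R_{0}$ of a degenerate $\pi_{i}$ is univariate in the variable it shares with that one neighbor, and Proposition \ref{degenQ} exhibits a genuine family (Case (2) there, built from a primitive sixth root of unity $\zeta$) in which the degenerate operator is multiplication by a non-constant linear polynomial $a(y+b)$; this family really exists for $n=3$. The paper's use of $n\geq4$ is therefore different from what you describe: for interior degenerate indices $2\leq i\leq n-2$ the braid relations with \emph{both} neighbors force $R_{0}$ to be univariate in each of its two variables, hence a constant, and for boundary indices a chain of three consecutive operators is needed (via Lemma \ref{Qneq0}, Proposition \ref{sfsigmaf}, and the shape $a(x+b)^{2}$ of the $T$-polynomial) to exclude the $\zeta$-family. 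Your proposal never confronts this exceptional case, and the subsequent propagation argument, while structurally reasonable, rests on the mis-assigned forms and so does not go through as written.
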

The operators in the latter theorem also generate a Hecke algebra $\mathcal{H}_{\mu,0}$. The ones with vanishing $T$-polynomials generate a Hecke algebra only when $\pi_{i}=rs_{i}$ for a constant $r$, and then the Hecke algebra is $\mathcal{H}_{0,r^{2}}$. We remark that the hypothesis $n\geq4$ is important in the last theorem, and for $n=3$ there are additional families---see Proposition \ref{degenQ} and Remark \ref{Qdegen}  (none of which produces a Hecke algebra though). We conclude with some remarks about commutation relations between the operators in the families that we found.

\medskip

The paper is divided into 5 sections. Section \ref{CanQuad} defines the notions required for describing the canonical forms of polynomial divided difference operators, as well as discusses the quadratic braid relations. Section \ref{CubandDeg} establishes the basic equalities associated with the cubic braid relations, and determines the families with vanishing $T$-polynomials. Section \ref{NonDeg} investigates the families of non-degenerate polynomial divided difference operators satisfying the braid relations (thus establishing the first main theorem), and Section \ref{NonDeg}, and Section \ref{WithVanQ0} presents the results involving also operators for which the $Q_{0}$ polynomials might vanish (yielding the second main theorem). Finally, Section \ref{HecandCom} contains the connections with Hecke algebras and the comments about commutation relations.

\medskip

I am grateful to B. Brubaker for his encouragement to pursue the investigation of these questions, as well as to P. Alexandersson, A. Weigandt, and D. Huang for interesting discussions around related topics.

\section{Canonical Forms and the Quadratic Relations \label{CanQuad}}

We fix a number $n\geq3$ (smaller $n$'s give trivial results), and we work throughout with an unspecified field of coefficients (of arbitrary characteristic), over which all of our polynomials and operators are defined.

One of the elementary but crucial property of the divided difference operator $\partial_{i}$ is the modified Leibniz rule, as follows.
\begin{lem}
We have the equality $\partial_{i}(fg)=\partial_{i}f \cdot g+s_{i}f\cdot\partial_{i}g$ for any two functions $f$ and $g$. In particular, if $g$ is symmetric in $x_{i}$ and $x_{i+1}$ then multiplication by $g$ commutes with $\partial_{i}$, namely $\partial_{i}(fg)=\partial_{i}f \cdot g$. \label{Leibniz}
\end{lem}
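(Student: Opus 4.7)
The plan is to prove this by a direct computation from the definition $\partial_i h = \frac{h - s_i h}{x_i - x_{i+1}}$, using the obvious fact that $s_i$ is a ring homomorphism on the polynomial ring (so $s_i(fg) = (s_i f)(s_i g)$) and the standard telescoping trick of adding and subtracting a mixed term.

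Concretely, I would write
\[
\partial_i(fg) = \frac{fg - s_i(fg)}{x_i - x_{i+1}} = \frac{fg - (s_i f)(s_i g)}{x_i - x_{i+1}},
\]
and then insert $\pm (s_i f) g$ in the numerator to split it as
\[
\frac{(f - s_i f)\, g \;+\; (s_i f)\,(g - s_i g)}{x_i - x_{i+1}} \;=\; \partial_i f \cdot g \;+\; s_i f \cdot \partial_i g,
\]
which is exactly the claimed identity. (One could equally well insert $\pm f (s_i g)$ to obtain the symmetric variant $\partial_i f \cdot s_i g + f \cdot \partial_i g$; it is worth noting in passing that both forms are valid, though only the one stated is asserted.) The ``in particular'' clause is then immediate: if $g$ is symmetric in $x_i$ and $x_{i+1}$ then $s_i g = g$, so $\partial_i g = 0$ and the second summand vanishes, leaving $\partial_i(fg) = \partial_i f \cdot g$.

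There is essentially no obstacle here; the only subtlety is making sure that the manipulation is legitimate as an identity of functions (or polynomials) rather than just formal fractions, but this is guaranteed because $f - s_i f$ and $g - s_i g$ are both divisible by $x_i - x_{i+1}$ (they vanish on $x_i = x_{i+1}$ and are anti-symmetric under $s_i$), so each of the two summands is individually well-defined. Thus the proof is one short display plus a one-line corollary.
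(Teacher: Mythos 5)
Your proof is correct and is essentially identical to the paper's: both add and subtract the mixed term $s_{i}f\cdot g$ (equivalently $\tfrac{s_{i}f\cdot g}{x_{i}-x_{i+1}}$) in the expression for $\partial_{i}(fg)$, using $s_{i}(fg)=s_{i}f\cdot s_{i}g$, and both derive the second claim from $s_{i}g=g$ forcing $\partial_{i}g=0$. No issues.
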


\begin{proof}
The first equality is obtained by adding and subtracting $\frac{s_{i}f \cdot g}{x_{i}-x_{i+1}}$ to the expression $\frac{fg-s_{i}f \cdot s_{i}g}{x_{i}-x_{i+1}}$ defining $\partial_{i}(fg)$ (recall that $s_{i}(fg)=s_{i}f \cdot s_{i}g$) and using the definition of $\partial_{i}f$ and $\partial_{i}g$. By noting that the symmetry condition on $g$ implies that $s_{i}g=g$ and hence $\partial_{i}g=0$, the second equality follows from the first one. This proves the lemma.
\end{proof}

Lemma \ref{Leibniz} implies that given a polynomial divided difference operator, the expression from Equation \eqref{poldifop} describing it is not unique. In order to obtain a unique presentation, we make the following definition.
\begin{defn}
A monomial $x_{i}^{r}x_{i+1}^{s}$, times a monomial in other variables perhaps, is \emph{$\partial_{i}$-positive} is $r>s$. A general polynomial in $x_{i}$ and $x_{i+1}$ (among other variables) is \emph{$\partial_{i}$-positive} if it is a linear combination of $\partial_{i}$-positive monomials. \label{dipos}
\end{defn}
The reason for the terminology in Definition \ref{dipos} is that the $\partial_{i}$-positive monomials are precisely the monomials in $x_{i}$ and $x_{i+1}$ to which applying $\partial_{i}$ yields a non-zero combination of monomials with positive coefficients. Since the $\partial_{i}$-positive monomials $x_{i}^{r}x_{i+1}^{s}$ combine with the symmetric ones $x_{i}^{r}x_{i+1}^{s}+x_{i}^{s}x_{i+1}^{r}$ and $x_{i}^{r}x_{i+1}^{r}$ to produce a basis for all polynomials, it is clear that every polynomial in $x_{i}$ and $x_{i+1}$ can be uniquely presented as the sum of a polynomial that is symmetric in these variables and a $\partial_{i}$-positive one.

\begin{rmk}
Recall that if a polynomial in $x_{i}$ and $x_{i+1}$ is symmetric then its degree in $x_{i}$ and in $x_{i+1}$ separately is the same. We call this degree the \emph{variable degree} of this polynomial. Thus, for a polynomial like $x_{i}^{r}x_{i+1}^{s}+x_{i}^{s}x_{i+1}^{r}$ with $r>s$ mentioned above, its homogeneity degree is $r+s$ and its variable degree is $r$. It is clear that if such a polynomial is homogeneous of degree $m$ and has variable degree $r$ then $2r \geq m$. \label{vardeg}
\end{rmk}

Now, it is clear from the definition that the kernel of $\partial_{i}$ consists of the polynomials that are symmetric in $x_{i}$ and $x_{i+1}$ (as in the proof of Lemma \ref{Leibniz}), and since both the numerator and denominator in its definition are anti-symmetric, its image also lands in the symmetric polynomials. These properties are complemented by the following result.
\begin{lem}
The map $\partial_{i}$ is a bijection from the $\partial_{i}$-positive polynomial in $x_{i}$ and $x_{i+1}$ onto those that are symmetric in these variables. \label{symimdi}
\end{lem}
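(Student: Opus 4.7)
The plan is to grade the polynomial ring by total homogeneity degree in $x_{i}$ and $x_{i+1}$ (treating the remaining variables as coefficients). Since $\partial_{i}$ lowers this degree by one and both the symmetric and the $\partial_{i}$-positive subspaces are graded, it suffices to prove that for each $m\geq1$, $\partial_{i}$ restricts to a bijection from $\partial_{i}$-positive polynomials of degree $m$ onto symmetric polynomials of degree $m-1$.

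Injectivity falls out immediately from the two facts established in the paragraphs preceding the lemma: the kernel of $\partial_{i}$ is exactly the subspace of polynomials symmetric in $x_{i}$ and $x_{i+1}$, and every polynomial decomposes uniquely as a symmetric part plus a $\partial_{i}$-positive part. Hence a $\partial_{i}$-positive polynomial $f$ with $\partial_{i}f=0$ is simultaneously symmetric and $\partial_{i}$-positive, so it must vanish.

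For surjectivity I would carry out the direct computation
\[\partial_{i}\bigl(x_{i}^{r+1}x_{i+1}^{s}\bigr)=\sum_{k=s}^{r}x_{i}^{k}x_{i+1}^{r+s-k}\qquad(r\geq s),\]
observe that the right-hand side is symmetric in $x_{i},x_{i+1}$ of total degree $r+s$, and note that its unique basis element of maximal variable degree (in the sense of Remark \ref{vardeg}) is $x_{i}^{r}x_{i+1}^{s}+x_{i}^{s}x_{i+1}^{r}$ when $r>s$, or $x_{i}^{s}x_{i+1}^{s}$ when $r=s$, appearing with coefficient one, all other symmetric summands having strictly smaller variable degree. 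Ordering the natural bases of the $\partial_{i}$-positive monomials of degree $m$ and of the symmetric polynomials of degree $m-1$ both by decreasing variable degree, the matrix of $\partial_{i}$ in degree $m$ is therefore upper triangular with ones on the diagonal, which yields surjectivity (and incidentally re-proves injectivity). The only bookkeeping that requires care---and the sole (minor) obstacle---is the parity matching between source and target: when $m$ is odd one must verify that the smallest $\partial_{i}$-positive monomial $x_{i}^{(m+1)/2}x_{i+1}^{(m-1)/2}$ maps onto the ``diagonal'' symmetric basis element $x_{i}^{(m-1)/2}x_{i+1}^{(m-1)/2}$ of the target, while for $m$ even no such diagonal element is present; once this indexing is squared away the claim is immediate.
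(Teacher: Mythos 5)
Your proposal is correct and follows essentially the same route as the paper: the key computation $\partial_{i}(x_{i}^{r+1}x_{i+1}^{s})=\sum_{k=s}^{r}x_{i}^{k}x_{i+1}^{r+s-k}$ is the same, and your triangular-matrix formulation (ordering bases by variable degree) is just a repackaging of the paper's induction on the variable degree of the symmetric target polynomial. The injectivity argument (kernel equals the symmetric polynomials, which meet the $\partial_{i}$-positive ones trivially) is also identical, and your parity bookkeeping checks out.
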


\begin{proof}
We saw that the $\partial_{i}$-positive monomials intersect the symmetric ones trivially, whence the injectivity of the map. We also know that the image consists only of symmetric polynomials, so it remains to prove surjectivity onto these polynomials.

We now note that if a polynomial is homogeneous of degree $m+1$ in $x_{i}$ and $x_{i+1}$ then the degree of homogeneity of its $\partial_{i}$-image in these variables is $m$. It thus suffice suffices to consider a polynomial $\varphi$ that is symmetric and homogeneous of degree $m$ in these variables, and find a $\partial_{i}$-positive pre-image under $\partial_{i}$ that is homogeneous of degree $m+1$. We then argue by induction on the variable degree $r$ of $\varphi$, which was seen to satisfy $2r \geq m$ (see Remark \ref{vardeg}).

The variable degree condition implies that $\varphi$ is a non-zero multiple of $x_{i}^{r}x_{i+1}^{d-r}$ plus a linear combination of monomials involving powers of $x_{i}$ that are smaller than $r$. We can thus write $\varphi$ as $a\sum_{s=d-r}^{r}x_{i}^{s}x_{i+1}^{d-s}$ for the appropriate $a\neq0$, plus a symmetric polynomial that is homogeneous of degree $m$ and has variable degree smaller than $r$ (in particular the remainder is 0 in case $2r-1 \leq m$). Now, the induction hypothesis implies that remainder is the $\partial_{i}$-image of some $\partial_{i}$-positive polynomial (which is unique and homogeneous of degree $m+1$, where for 0 we just take 0), and as $\partial_{i}(x_{i}^{r+1}x_{i+1}^{d-r})$ equals $\sum_{s=d-r}^{r}x_{i}^{s}x_{i+1}^{d-s}$, adding the $\partial_{i}$-positive monomial $ax_{i}^{r+1}x_{i+1}^{d-r}$ to that pre-image produces a pre-image of $\varphi$, as desired. This proves the lemma.
\end{proof}
The uniqueness of the $\partial_{i}$-positive pre-image of $\varphi$ under $\partial_{i}$ is also visible in the fact that the multiple of each $\partial_{i}$-positive monomial $x_{i}^{r+1}x_{i+1}^{d-r}$ in the construction of that pre-image in the proof of Lemma \ref{symimdi} was determined by $\varphi$.

A polynomial divided difference operator can have many presentations, with different polynomials $P$, $Q$, $R$, and $S$. However, there are three specific forms of the operator that are canonical, each with its own normalization.
\begin{prop}
Let $\pi_{i}$ be a polynomial divided difference operator as in Equation \eqref{poldifop}. Then there is a unique way of writing $\pi_{i}$ with $P=S=0$, another unique way of writing $\pi_{i}$ with $S=0$ and $Q$ and $R$ that are $\partial_{i}$-positive as in Definition \ref{dipos}, and a third unique way in which $S=0$ and $\partial_{i}$-positive $P$ and $R$. The polynomial $R$ in the two last presentations is the same. \label{canform}
\end{prop}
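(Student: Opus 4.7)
The plan is to exploit the unique decomposition of any polynomial in $x_i$ and $x_{i+1}$ as the sum of a $\partial_i$-positive polynomial and a symmetric one (noted just before the statement), together with the bijection of Lemma \ref{symimdi}.

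First I would reduce to $S=0$ via the identity $s_i f = f - (x_i-x_{i+1})\partial_i f$, which is immediate from the definition of $\partial_i$: any term $S \cdot s_i f$ is absorbed by replacing $R$ with $R+S$ and $Q$ with $Q - S(x_i - x_{i+1})$. Then Lemma \ref{Leibniz} rewrites $\partial_i(Pf)$ as $\partial_i P \cdot f + s_i P \cdot \partial_i f$, so every polynomial divided difference operator can be put in the shape $\pi_i f = A f + C \partial_i f$ with $A = R + \partial_i P$ and $C = Q + s_i P$. This already gives form~1, and its uniqueness follows by evaluating $\pi_i$ on $f=1$ to recover $A$ (since $\partial_i 1 = 0$), and then on $f=x_i$ to recover $C = \pi_i x_i - A x_i$ (since $\partial_i x_i = 1$).

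To pass from the canonical pair $(A,C)$ to forms~2 and~3, I would solve the system $A = R + \partial_i P$, $C = Q + s_i P$ under the respective normalizations. Since $\partial_i P$ is automatically symmetric and $R$ must be $\partial_i$-positive, the unique decomposition $A = A^+ + A^{\text{sym}}$ forces $R = A^+$ and $\partial_i P = A^{\text{sym}}$ in both forms; by Lemma \ref{symimdi} there is a unique $\partial_i$-positive pre-image $P_0$ of $A^{\text{sym}}$, while all other pre-images differ from $P_0$ by a symmetric polynomial. This already establishes that $R$ equals $A^+$ in both forms~2 and~3, proving the last assertion of the proposition. Form~3 requires $P$ itself to be $\partial_i$-positive, singling out $P = P_0$ and hence $Q = C - s_i P_0$. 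For form~2 the remaining freedom $P^{\text{sym}}$ is fixed by demanding that $Q = C - s_i P_0 - P^{\text{sym}}$ be $\partial_i$-positive: using the decomposition $s_i P_0 = -P_0 + (P_0 + s_i P_0)$ into $\partial_i$-positive and symmetric parts (each monomial $x_i^r x_{i+1}^s$ of $P_0$, with $r>s$, contributes $x_i^s x_{i+1}^r = -x_i^r x_{i+1}^s + (x_i^r x_{i+1}^s + x_i^s x_{i+1}^r)$ to $s_i P_0$), the requirement that $Q$ have vanishing symmetric part pins $P^{\text{sym}}$ down uniquely as $(C - s_i P_0)^{\text{sym}}$. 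The main bookkeeping step is precisely this matching of the symmetric freedom in $P$ with the symmetric constraint on $Q$; once done, both $P$ and $Q$ are determined, and the proof is complete.
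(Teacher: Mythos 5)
Your proposal is correct and follows essentially the same route as the paper: eliminate $S$ via $s_{i}f=f-(x_{i}-x_{i+1})\partial_{i}f$, get the first form from Lemma \ref{Leibniz} with uniqueness by evaluating at $f=1$ and $f=x_{i}$, and then obtain the other two forms from the unique decomposition into $\partial_{i}$-positive plus symmetric parts together with the bijection of Lemma \ref{symimdi} (your $A$ and $C$ are exactly the paper's $R_{0}=\pi_{i}(1)$ and $Q_{0}$, and your observation that $R$ must be the $\partial_{i}$-positive part of $A$ in both normalizations is precisely how the paper gets the final assertion). The only difference is organizational---you solve for forms 2 and 3 starting from the canonical pair $(A,C)$ rather than from an arbitrary presentation---which does not change the substance of the argument.
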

We denote the polynomials showing up in the first canonical form of $\pi_{i}$ by $Q_{0}$ and $R_{0}$, those in the second one by $P_{+}$, $Q^{+}$, and $R_{+}$, and the ones showing up in the third form by $P^{+}$, $Q_{+}$, and $R_{+}$. Indeed, we get the same $R=R_{+}$ in the latter two presentations, and we keep the notation $P_{+}$ and $Q_{+}$ to indicate $\partial_{i}$-positive polynomials, while $Q^{+}$ and $P^{+}$ indicate that the other polynomial in the presentation is $\partial_{i}$-positive.

\begin{proof}
The definition of $\partial_{i}$ implies that $s_{i}f=f-(x_{i}-x_{i+1})\partial_{i}f$. Thus we can replace $R$ by $R+S$, $Q$ by $Q-(x_{i}-x_{i+1})S$, and $S$ by 0 and get a presentation of the same polynomial divided difference operator. We may thus restrict attention to presentations in which $S=0$.

Recall now from Lemma \ref{Leibniz} that we can write $\partial_{i}(Pf)$ as $\partial_{i}P \cdot f+s_{i}P\cdot\partial_{i}f$. Thus, by replacing $Q$ by $Q_{0}:=s_{i}P+Q$, $R$ by $R_{0}:=R+\partial_{i}P$, and $P$ by 0 we obtain the first asserted form. Now, if $\pi_{i}$ is given in such a form then by substituting $f=1$ (hence $\partial_{i}f=0$) we get $\pi_{i}(1)=R_{0}$. Moreover, we have $\pi_{i}(x_{i})=Q_{0}+R_{0}x_{i}$, which determines $Q_{0}$ once $R_{0}$ is known. The canonicity of this form follows.

Next, write $R$ as $R_{+}+R_{s}$, with $R_{s}$ symmetric with respect to interchanging $x_{i}$ and $x_{i+1}$, and such that $R_{+}$ is $\partial_{i}$-positive (which can be done in a unique manner). Lemma \ref{symimdi} produces a polynomial $\widetilde{R}$ such that $\partial_{i}\widetilde{R}=R_{s}$. We write $\partial_{i}(\widetilde{R}f)$ as $R_{s}f+s_{i}\widetilde{R}\cdot\partial_{i}f$ (Lemma \ref{Leibniz}), so that replacing $P$ by $P+\widetilde{R}$, $Q$ by $Q-s_{i}\widetilde{R}$, and $R$ by $R_{+}$ yields a presentation of the same operator but in which $R=R_{+}$ is $\partial_{i}$-positive. Moreover, if $\pi_{i}$ is given in any form with $S=0$ then $\pi_{i}(1)=\partial_{i}P+R$, so that $R_{+}$ is determined as the $\partial_{i}$-positive part of $\pi_{i}(1)$.

Knowing that $S=0$ and $R=R_{+}$ is the uniquely determined $\partial_{i}$-positive option, we consider the decompositions for $P=P_{+}+P_{s}$ and $Q=Q_{+}+Q_{s}$ in the same manner, and the second assertion of Lemma \ref{Leibniz} implies that the symmetric parts $P_{s}$ and $Q_{s}$ can be taken in and out of the action of $\partial_{i}$. The second form is thus with this $P_{+}$ and with $Q^{+}:=Q+P_{s}$, and the third one involves $Q_{+}$ and $P^{+}:=P+Q_{s}$. This implies that $P_{+}+Q^{+}$ and $P^{+}+Q_{+}$ both equal $P+Q=P_{+}+Q_{+}+P_{s}+Q_{s}$.

Finally, recalling that $\pi_{i}(1)=\partial_{i}P+R$ with $R=R_{+}$, Lemma \ref{symimdi} implies that $P_{+}$ is the unique $\partial_{i}$-positive polynomial whose $\partial_{i}$-image is the symmetric part of $\pi_{i}(1)$. We also have $\pi_{i}(x_{i})=\partial_{i}(Px_{i})+Q+Rx_{i}$, from which we extract the value of $Q^{+}$ in case $R=R_{+}$ and $P=P_{+}$. This also shows that $Q_{+}$ is the $\partial_{i}$-positive part of $\pi_{i}(x_{i})-x_{i}R_{+}$, and once $P_{+}$, $Q^{+}$, and $Q_{+}$ are known, the value of $P^{+}$ is given by $P_{+}+Q^{+}-Q_{+}$ from above. Hence the second and third forms of any polynomial divided difference operator are also canonical. This proves the proposition.
\end{proof}

It follows from the constructions in Proposition \ref{canform} that $Q_{0}(x_{i},x_{i+1})$ and $R_{0}(x_{i},x_{i+1})$ can be given by $P(x_{i+1},x_{i})+Q(x_{i},x_{i+1})-(x_{i}-x_{i+1})S(x_{i},x_{i+1})$ and $R(x_{i},x_{i+1})+S(x_{i},x_{i+1})+\partial_{i}P(x_{i},x_{i+1})$ respectively, given any presentation of $\pi_{i}$ as in Equation \eqref{poldifop} (the ones with $+$ require a decomposition, and thus cannot be written directly using such formulae). These combinations are thus independent of the presentation. We now introduce a third combination that will turn out useful later. We define
\begin{equation}
T(x_{i},x_{i+1}):=P(x_{i},x_{i+1})+(x_{i}-x_{i+1})R(x_{i},x_{i+1})+Q(x_{i},x_{i+1}), \label{Tpol}
\end{equation}
and prove the following consequence of Proposition \ref{canform}, or rather of its proof.
\begin{cor}
The polynomial $T$ from Equation \eqref{Tpol} is independent of the presentation of $\pi_{i}$. \label{Tcan}
\end{cor}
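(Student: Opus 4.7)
The plan is to exhibit $T$ as a fixed polynomial combination of the two canonical polynomials $Q_{0}$ and $R_{0}$ attached to $\pi_{i}$ by Proposition~\ref{canform}. Those polynomials have already been shown, in the paragraph preceding Equation~\eqref{Tpol}, to be expressible from an arbitrary presentation as $Q_{0}=s_{i}P+Q-(x_{i}-x_{i+1})S$ and $R_{0}=R+S+\partial_{i}P$, and to be intrinsic invariants of $\pi_{i}$. Consequently, anything one can write down using only $Q_{0}$ and $R_{0}$ is automatically presentation-independent.

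The key identity I would verify is
\[
T \;=\; Q_{0}+(x_{i}-x_{i+1})R_{0}.
\]
Substituting the formulas above, the two $\pm(x_{i}-x_{i+1})S$ contributions cancel on the spot, and the term $s_{i}P$ appearing in $Q_{0}$ combines with $(x_{i}-x_{i+1})\partial_{i}P$ coming from $(x_{i}-x_{i+1})R_{0}$ via the basic identity $(x_{i}-x_{i+1})\partial_{i}P=P-s_{i}P$, which is simply the definition of $\partial_{i}$ cleared of denominators. After this cancellation, what survives is exactly $P+(x_{i}-x_{i+1})R+Q$, which is the definition of $T$ in Equation~\eqref{Tpol}.

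There is no real obstacle here: the corollary reduces to a one-line algebraic bookkeeping, and the only content is spotting the right combination of $Q_{0}$ and $R_{0}$ to write down. An alternative and slightly longer route would be to check directly that $T$ is unchanged under each of the elementary moves used in the proof of Proposition~\ref{canform} (absorbing $S$ into $R$ and $Q$; moving a symmetric part of $P$ into $Q$; and the $\widetilde{R}$-move that shifts the symmetric part of $R$ through $\partial_{i}$), each of which turns out to leave $T$ invariant by the same identity $(x_{i}-x_{i+1})\partial_{i}P=P-s_{i}P$. The packaged formula $T=Q_{0}+(x_{i}-x_{i+1})R_{0}$, however, makes the invariance immediate and, as a bonus, records explicitly how $T$ is recovered from the first canonical form of $\pi_{i}$.
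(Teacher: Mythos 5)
Your proof is correct and follows essentially the same route as the paper: the paper's proof observes that $T=\pi_{i}(x_{i})-x_{i+1}\pi_{i}(1)$, and since $R_{0}=\pi_{i}(1)$ and $Q_{0}=\pi_{i}(x_{i})-x_{i}\pi_{i}(1)$, this is exactly your identity $T=Q_{0}+(x_{i}-x_{i+1})R_{0}$ in different packaging (and your alternative route of checking invariance under the elementary moves is the paper's ``first way''). The algebraic verification you give, with the cancellation of the $\pm(x_{i}-x_{i+1})S$ terms and the identity $(x_{i}-x_{i+1})\partial_{i}P=P-s_{i}P$, is accurate.
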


\begin{proof}
One way of obtaining this property is by verifying that $T$ remains invariant under the operations in the proof of Proposition \ref{canform}. A second way one is by observing that the values of $\pi_{i}(1)$ and $\pi_{i}(x_{i})$ (to which we have to add $S$ and $Sx_{i+1}$ respectively in case $S\neq0$) imply, after expanding $\partial(Pf)$ as in Lemma \ref{Leibniz} and using similar arguments, that $T(x_{i},x_{i+1})$ is given by $\pi_{i}(x_{i})-x_{i+1}\pi_{i}(1)$ (in a similar manner we get $Q_{0}=\pi_{i}(x_{i})-x_{i}\pi_{i}(1)$ as well). A third way is via the fact that similar considerations express $\pi_{i}f$ as $\frac{T(x_{i},x_{i+1})f-Q_{0}(x_{i},x_{i+1})s_{i}f}{x_{i}-x_{i+1}}$, a formula that will also be useful below. This proves the corollary.
\end{proof}
In fact, in Proposition \ref{canform} one could consider the normalization in which $R=0$, and then $P=0$ as well. In this normalization the coefficient of $\partial_{i}f$ is $T$ from Equation \eqref{Tpol}, and $R_{0}$ shows up again, as the multiplier of $s_{i}f$.

\medskip

As Proposition \ref{canform} reduces the presentation of a general polynomial divided difference operator by the last term in Equation \eqref{poldifop}, we can simplify some calculations. First we establish the quadratic braid relations from Equation \eqref{braid} for our operators, which hold in general.
\begin{prop}
Let $i$ and $k$ be indices with $|k-i|\geq2$, and set $\pi_{i}$ and $\pi_{k}$ to be as in Proposition \ref{canform}, with polynomials that may be different for the two indices. Then we have $\pi_{i}\pi_{k}=\pi_{k}\pi_{i}$. \label{quadcom}
\end{prop}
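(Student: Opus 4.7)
The plan is to exploit the disjointness of the index sets $\{i,i+1\}$ and $\{k,k+1\}$: every constituent operation of $\pi_{i}$ (multiplication by its coefficient polynomials, application of $s_{i}$, application of $\partial_{i}$) acts only on the variable slots $x_{i},x_{i+1}$, while every constituent of $\pi_{k}$ acts only on $x_{k},x_{k+1}$, so each pair of constituents commutes and the full operators commute as a consequence.

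First I would fix the first canonical presentation of Proposition \ref{canform}, writing $\pi_{i}f=Q_{i}\partial_{i}f+R_{i}f$ and $\pi_{k}f=Q_{k}\partial_{k}f+R_{k}f$, where $Q_{i},R_{i}$ are polynomials in $x_{i},x_{i+1}$ (possibly with extra parameters independent of the $x_{j}$'s) and similarly $Q_{k},R_{k}$ involve only $x_{k},x_{k+1}$. Then I would establish two commutation facts. The first is that multiplication by $Q_{k}$ or $R_{k}$ commutes with $\partial_{i}$ and with $s_{i}$, and symmetrically with $i$ and $k$ interchanged: this is immediate from Lemma \ref{Leibniz}, since $Q_{k}$ and $R_{k}$ are trivially $s_{i}$-symmetric (they do not involve $x_{i}$ or $x_{i+1}$ at all). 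The second is that $\partial_{i}\partial_{k}=\partial_{k}\partial_{i}$: the denominator $x_{k}-x_{k+1}$ appearing in the definition of $\partial_{k}$ is $s_{i}$-symmetric, so Lemma \ref{Leibniz} lets me move it outside of $\partial_{i}$; and $s_{i}$ and $s_{k}$ commute, as permutations on disjoint index pairs, which implies $\partial_{i}s_{k}=s_{k}\partial_{i}$. Combining these, I get
\[\partial_{i}\partial_{k}f=\partial_{i}\tfrac{f-s_{k}f}{x_{k}-x_{k+1}}=\tfrac{\partial_{i}f-s_{k}\partial_{i}f}{x_{k}-x_{k+1}}=\partial_{k}\partial_{i}f.\]

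With these two facts in hand, I would expand $\pi_{k}\pi_{i}f$ as $Q_{k}\partial_{k}(Q_{i}\partial_{i}f+R_{i}f)+R_{k}(Q_{i}\partial_{i}f+R_{i}f)$, then push $Q_{i}$ and $R_{i}$ past $\partial_{k}$ using the first commutation fact and use the second to replace $\partial_{k}\partial_{i}$ by $\partial_{i}\partial_{k}$. The result is $Q_{i}Q_{k}\partial_{i}\partial_{k}f+R_{i}Q_{k}\partial_{k}f+Q_{i}R_{k}\partial_{i}f+R_{i}R_{k}f$, and an identical expansion of $\pi_{i}\pi_{k}f$ produces the same four terms, proving the equality.

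The main obstacle is not mathematical but organizational: I need to verify, before moving polynomials across divided difference operators, that the symmetry hypothesis of Lemma \ref{Leibniz} is indeed met, and that the permutations $s_{i}$ and $s_{k}$ act on disjoint letters. Both are genuine consequences of $|k-i|\geq2$, so once these book-keeping points are made explicit the commutation is a direct term-by-term match with no subtleties, which is why the quadratic braid relations (in contrast to the cubic ones treated later) impose no constraints on the polynomials $P$, $Q$, $R$, $S$ at all.
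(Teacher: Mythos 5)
Your proposal is correct and follows essentially the same route as the paper: both use the first canonical form from Proposition \ref{canform} (so $\pi_{i}f=Q_{i}\partial_{i}f+R_{i}f$), both invoke Lemma \ref{Leibniz} to move the coefficient polynomials of one operator past the divided difference of the other, both deduce $\partial_{i}\partial_{k}=\partial_{k}\partial_{i}$ from the commutation of $s_{i}$ and $s_{k}$ on disjoint index pairs, and both conclude by matching the resulting four-term expansions. No gaps.
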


\begin{proof}
The assumption on $i$ and $k$ implies, via the second part of Lemma \ref{Leibniz} and the fact that functions that are independent of $x_{i}$ and $x_{i+1}$ (or $x_{k}$ and $x_{k+1}$) are symmetric with respect to them, that the polynomials showing up in $\pi_{k}$ commute with $\partial_{i}$ and vice versa. For simplicity we shall use the normalization with $Q_{0}$ and $R_{0}$ (so that $P=S=0$) for both operators, and we shorthand $Q_{i,0}(x_{i},x_{i+1})$, $Q_{k,0}(x_{k},x_{k+1})$, $R_{i,0}(x_{i},x_{i+1})$ and $R_{k,0}(x_{k},x_{k+1})$ to $Q^{0}_{i}$, $Q^{0}_{k}$, $R^{0}_{i}$, and $R^{0}_{k}$ respectively. The commutation relations allow us to write \[\pi_{i}\pi_{k}f=Q^{0}_{i}Q^{0}_{k}\partial_{i}\partial_{k}f+Q^{0}_{i}R^{0}_{k}\partial_{i}f+R^{0}_{i}Q^{0}_{k}\partial_{k}f+R^{0}_{i}R^{0}_{k}f,\] and $\pi_{k}\pi_{i}f$ is given by the same expression with $i$ and $k$ interchanged. As the assumption on $i$ and $k$ implies that $s_{i}s_{k}g=s_{k}s_{i}g$ for any function $g$ that depends on $x_{i}$, $x_{i+1}$, $x_{k}$, and $x_{k+1}$, similar considerations show that $\partial_{i}\partial_{k}g=\partial_{k}\partial_{i}g$ for any such $g$, which yields the equality of our expressions for $\pi_{i}\pi_{k}f$ and $\pi_{k}\pi_{i}f$ as desired. This proves the proposition.
\end{proof}
One could, in fact, prove Proposition \ref{quadcom} without invoking Proposition \ref{canform}, but the expressions in the proof would have contained more terms but still lead to the same result by the same reasoning.

\section{The Cubic Braid Relations and Degeneracy \label{CubandDeg}}

For the cubic braid relations in Equation \eqref{braid}, only the three variables $x_{i}$, $x_{i+1}$, and $x_{i+2}$ are concerned. To ease notation we replace them by $x$, $y$, and $z$ respectively. To remove more indices, we denote the operator $s_{i}$, interchanging $x$ and $y$, by simply $s$, and write $\sigma$ for the operator $s_{i+1}$ interchanging $y$ and $z$. The cubic braid relation for permutations thus becomes $s\sigma s=\sigma s\sigma$ in this notation. We also write $\pi$ for $\pi_{i}$ and $\varpi$ for $\pi_{i+1}$, and recalling that we allow different polynomials for them in Equation \eqref{poldifop}, we denote those of $\pi=\pi_{i}$ as in that equation, and the ones showing up in $\varpi=\pi_{i+1}$ by adding tildes. We shall also write $P_{xy}$ for $P(x,y)$, $Q_{yz}$ for $Q(y,z)$, and similarly for the other polynomials, including those of the tildes. Note that $sP_{xy}$ thus equals $P_{yx}$, while $\sigma Q_{xz}=Q_{xy}$, $sR_{yz}=R_{xz}$ and so on, and pnce again for the polynomials $Q_{0}$ and $R_{0}$ we shall add the superscript 0 (like in the proof of Proposition \ref{quadcom}).

The expressions that we must consider for the cubic braid relation are the following ones.
\begin{lem}
Let $f$ be any function of $x$, $y$, and $z$ (among possibly other variables). Then $\pi\varpi\pi f$ equals \[\frac{[T_{xy}^{2}\widetilde{T}_{yz}(x-z)\!-\!\widetilde{T}_{xz}Q^{0}_{xy}Q^{0}_{yx}(y-z)]f\!-\!Q^{0}_{xy}[T_{xy}\widetilde{T}_{yz}(x-z)\!-\!T_{yx}\widetilde{T}_{xz}(y-z)]sf}{(x-y)^{2}(x-z)(y-z)}-\] \[-\frac{T_{xy}\widetilde{Q}^{0}_{yz}[T_{xz}\sigma f-Q^{0}_{xz}\sigma sf]-Q_{xy}^{0}\widetilde{Q}_{xz}^{0}[T_{yz}s\sigma f-Q^{0}_{yz}s\sigma sf]}{(x-y)(x-z)(y-z)},\] while the expression for $\varpi\pi\varpi f$ is given by  \[\frac{[T_{xy}\widetilde{T}_{yz}^{2}(x-z)\!-\!T_{xz}\widetilde{Q}^{0}_{yz}\widetilde{Q}^{0}_{zy}(x-y)]f\!-\!\widetilde{Q}^{0}_{yz}[T_{xy}\widetilde{T}_{yz}(x-z)\!-\!\widetilde{T}_{zy}T_{xz}(x-y)]\sigma f}{(x-y)(x-z)(y-z)^{2}}-\] \[-\frac{\widetilde{T}_{yz}Q^{0}_{xy}[\widetilde{T}_{xz}sf-\widetilde{Q}^{0}_{xz}s\sigma f]-\widetilde{Q}^{0}_{yz}Q^{0}_{xz}[\widetilde{T}_{xy}\sigma sf-\widetilde{Q}^{0}_{xy}\sigma s\sigma f]}{(x-y)(x-z)(y-z)}.\] Moreover, the two compositions coincide for every function $f$ if and only if the polynomials multiplying $f$, $sf$, $\sigma f$, $s\sigma f$, and $\sigma sf$ are the same on both sides, and the one of $s\sigma sf$ coincides with that of $\sigma s\sigma f$. \label{expansions}
\end{lem}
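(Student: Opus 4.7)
The plan is to iterate the compact presentation
\[
\pi f = \frac{T_{xy}f - Q^0_{xy}\,sf}{x-y}, \qquad \varpi g = \frac{\widetilde{T}_{yz}g - \widetilde{Q}^0_{yz}\,\sigma g}{y-z}
\]
supplied by Corollary \ref{Tcan}. The whole computation is a two-step substitution, and the main effort is bookkeeping rather than an inventive argument.

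First I would compute $\varpi\pi f$ by plugging the formula for $\pi f$ into the formula for $\varpi$. Since $\sigma$ only affects the variables $y$ and $z$, its action on $\pi f$ substitutes $y$ by $z$ in every polynomial coefficient and composes with the inner $s$ to produce $\sigma s$ on $f$, yielding
\[
\varpi\pi f = \frac{\widetilde{T}_{yz}(T_{xy}f - Q^0_{xy}sf)(x-z) - \widetilde{Q}^0_{yz}(T_{xz}\sigma f - Q^0_{xz}\sigma sf)(x-y)}{(x-y)(x-z)(y-z)}.
\]
This is a combination of $f$, $sf$, $\sigma f$, $\sigma sf$ with rational-function coefficients over the common denominator $D:=(x-y)(x-z)(y-z)$.

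Second I would apply $\pi$ to this result. The operator $s$ acts on $\varpi\pi f$ by swapping $x\leftrightarrow y$ in every polynomial coefficient (which sends $D$ to $-D$) and by composing with the inner permutations, so that $f,\ sf,\ \sigma f,\ \sigma sf$ are replaced by $sf,\ f,\ s\sigma f,\ s\sigma sf$. Combining the two contributions over the enlarged denominator $(x-y)^{2}(x-z)(y-z)$, and noticing that in the $\sigma f$, $\sigma sf$, $s\sigma f$, $s\sigma sf$ terms the numerator carries an explicit factor $(x-y)$ that cancels one copy in the denominator, produces the four numerator groups of the first displayed formula after collecting like terms. The expression for $\varpi\pi\varpi f$ follows from the same procedure under the symmetry $s\leftrightarrow\sigma$, $\pi\leftrightarrow\varpi$, $x\leftrightarrow z$, $T\leftrightarrow\widetilde{T}$, $Q_{0}\leftrightarrow\widetilde{Q}_{0}$ between the two sides of the cubic braid relation, and reproduces the second formula directly.

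For the last assertion, the six elements of $S_{3}=\langle s,\sigma\rangle$ act by six distinct variable permutations on a generic function $f$ of $x,y,z$, so the functions $\{wf\}_{w\in S_{3}}$ are linearly independent over the field of rational functions in the other parameters (indeed for any single generic polynomial $f$ the six functions are already linearly independent). Hence two combinations of them with rational-function coefficients agree for all $f$ if and only if corresponding coefficients agree. Since the cubic braid identity $s\sigma s=\sigma s\sigma$ holds as permutations, the coefficient of $s\sigma sf$ on the left must match that of $\sigma s\sigma f$ on the right, yielding the five explicit coefficient equalities plus the single merged one in the statement. The main obstacle, purely technical, is the careful tracking of the sign changes in $D$ under $s$ and of the $x\leftrightarrow y$ substitutions in all tilded and untilded polynomials, but no conceptual step is required beyond this.
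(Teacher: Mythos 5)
Your derivation of the two expansions is correct and follows the paper's own route exactly: iterate the compact form $\pi g=\frac{T_{xy}g-Q^{0}_{xy}sg}{x-y}$ from Corollary \ref{Tcan}, compute $\varpi\pi f$ (respectively $\pi\varpi f$) over the common denominator $(x-y)(x-z)(y-z)$, and then apply the outer operator, using that the denominator of the $\sigma f,\sigma sf$ (resp.\ $sf,s\sigma f$) block is $s$-invariant (equivalently, that its numerator carries the factor $x-y$ you point out) while the other block picks up the sign from $sD=-D$. The forward direction of the last assertion, via $s\sigma s=\sigma s\sigma$, is also fine.

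The gap is in your justification of the converse. You claim that ``for any single generic polynomial $f$ the six functions $\{wf\}_{w\in S_{3}}$ are already linearly independent'' over the relevant field and deduce that two combinations with rational-function coefficients agreeing for all $f$ must have equal coefficients. This does not work as stated: the coefficients being compared (built from $T_{xy}$, $Q^{0}_{xy}$, etc.) are rational functions of $x,y,z$, so the independence you need is over the field $K(x,y,z)$ --- and over that field any two nonzero polynomials are linearly dependent, so no single polynomial $f$ can have $\{wf\}_{w}$ linearly independent in the required sense. Independence over the constants (or over parameters other than $x,y,z$), which is what your parenthetical actually establishes for, say, a monomial, is not sufficient: $x$ and $y$ are independent over constants, yet $y\cdot x-x\cdot y=0$ is a nontrivial relation with coefficients in $K[x,y]$. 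What you must prove is that the six permutation \emph{operators} are linearly independent over $K(x,y,z)$, i.e.\ that no nontrivial combination $\sum_{w}e_{w}\cdot w$ with $e_{w}\in K(x,y,z)$ annihilates every $f$. This is true, but requires either Dedekind's independence of distinct field automorphisms, or the paper's device: test against $f=x^{2N}y^{N}$ for $N$ large, so that after clearing denominators the contributions of the six images occupy disjoint exponent ranges and cannot cancel against one another, forcing each coefficient to vanish. Supplying either of these arguments closes the gap.
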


\begin{proof}
Recall from the proof of Corollary \ref{Tcan} that for every function $g$ we have $\pi g=\frac{T_{xy}g-Q^{0}_{xy}sf}{x-y}$ and thus also $\varpi g=\frac{\widetilde{T}_{yz}g-\widetilde{Q}^{0}_{yz}sf}{y-z}$. This expresses $\varpi\pi f$ as \[\widetilde{T}_{yz}\frac{T_{xy}f-Q^{0}_{xy}sf}{(x-y)(y-z)}-\widetilde{Q}^{0}_{yz}\frac{T_{xz}\sigma f-Q^{0}_{xz}\sigma sf}{(x-z)(y-z)},\] where in the second term the denominator is symmetric in $x$ and $y$, so that the action of $\pi$ on it yields the second asserted fraction. We divide and multiply the first term by $x-z$, yielding a denominator which is multiplied by $-1$ under the action of $s$. Applying the action of $\pi$, and gathering the multipliers of $f$ and $sf$, then produces the first term in the desired formula.

In a similar manner we get that $\pi\varpi f$ equals \[T_{xy}\frac{\widetilde{T}_{yz}f-\widetilde{Q}^{0}_{yz}\sigma f}{(x-y)(y-z)}-Q^{0}_{xy}\frac{\widetilde{T}_{xz}sf-\widetilde{Q}^{0}_{xz}s\sigma f}{(x-y)(x-z)},\] with the second denominator being $\sigma$-invariant so that the $\varpi$-image of that term is the last required term. Expanding the first fraction by $x-z$ to get a denominator that is multiplied by $-1$ under $\sigma$, letting $\varpi$ act, and gather the terms containing $f$ and $\sigma f$ yields the remaining asserted expression.

For the comparison, we recall that $\sigma s\sigma=s\sigma s$, so that if the coefficients coincide as stated then so do the operators. Conversely, take $f(x,y)$ to be $x^{2N}y^{N}$ for some very large $N$, so that its images under the combinations of $s$ and $\sigma$ are $x^{2N}z^{N}$, $y^{2N}x^{N}$, $y^{2N}z^{N}$, $z^{2N}x^{N}$, and $z^{2N}y^{N}$. When $N$ is large enough, we can recognize the contribution of $f$ to each side as the part in which the exponent of $x$ is at least $2N$ and that of $y$ is at least $N$, and similarly for the contributions of all the images of $f$ under permutations. Thus comparing both sides with such a function $f$ implies the equality of the coefficients as desired. This proves the lemma.
\end{proof}

\medskip

\begin{rmk}
When we make the comparisons of the coefficients below, we will have many instances of a common multiplier that is some $Q_{0}$ or $T$. These multipliers can be cancelled out only when they are non-zero. Note, however, that if $Q_{0}=0$ then our operator $\pi$ simply multiplies $f$ by $R_{0}$ (see Equation \eqref{poldifop} and Proposition \ref{poldifop}), and when $T$ vanishes, Equation \eqref{Tpol} with $Q_{0}$ and $R_{0}$ (and $P=0$ yields $Q_{0}(x,y)=-(x-y)R_{0}(x,y)$, and $\pi f$ reduces to $R_{0} \cdot sf$ (see, e.g., the proof of Corollary \ref{Tcan}). We thus call the operator $\pi$ \emph{degenerate} if $Q_{0}=0$ or $T=0$, and \emph{non-degenerate} otherwise. The 0 operator is, of course, degenerate, but we exclude it from our calculations as it breaks the family $\{\pi_{i}\}_{i=1}^{n-1}$ of polynomial divided difference operators into one or more families that are associated with smaller permutation groups. \label{degnondeg}
\end{rmk}

Some of the polynomials that we shall encounter below will be related by the following property.
\begin{defn}
Two non-zero polynomials $Q$ and $\widetilde{Q}$ in two variables $x$ and $y$ (among possibly others) are called \emph{almost equal} if there exist a polynomial $\widehat{Q}$ in two variables, and four univariate polynomials $q_{l}$, $q_{r}$, $\widetilde{q}_{l}$, and $\widetilde{q}_{r}$ that satisfy the \emph{product property} $q_{l}q_{r}=\widetilde{q}_{l}\widetilde{q}_{r}$, such that $Q(x,y)=q_{l}(x)q_{r}(y)\widehat{Q}(x,y)$ and $\widetilde{Q}(x,y)=\widetilde{q}_{l}(x)\widetilde{q}_{r}(y)\widehat{Q}(x,y)$. In case more variables are involved, we say that $Q$ and $\widetilde{Q}$ are \emph{almost equal in $x$ and $y$} in case confusion in terms of the variables involved may arise. \label{almosteq}
\end{defn}
Note that the condition from Definition \ref{almosteq} is equivalent to the existence of such (non-zero) $q_{l}$, $q_{r}$, $\widetilde{q}_{l}$, and $\widetilde{q}_{r}$ (with the product property) such that the equality $\widetilde{q}_{l}(x)\widetilde{q}_{r}(y)Q(x,y)=q_{l}(x)q_{r}(y)\widetilde{Q}(x,y)$ holds (and almost equality is an equivalence relation). This is easily verified by using the UFD property of the ring of polynomials. We also observe that among the irreducible polynomials in that ring, some may depend both on $x$ and on $y$, and others may depend only on one of them (ignoring the ones that depend on neither). We shall call the first ones \emph{pure} irreducible polynomials (or irreducible polynomials that are \emph{pure in $x$ and $y$} when there may be confusion as to the variables considered), and the second ones \emph{univariate} irreducible polynomials. Finally, we remark that the decomposition in that definition is not unique, as if $\widehat{Q}$ has univariate divisors then they can be put into $q_{l}$ and $\widetilde{q}_{l}$ or $q_{r}$ and $\widetilde{q}_{r}$ instead. Conversely, any common divisor of $q_{l}$ and $\widetilde{q}_{l}$ and any one of $q_{r}$ and $\widetilde{q}_{r}$ can be canceled out from them and rather put into $\widehat{Q}$.

We begin with the following observation, which will be useful for both the degenerate and the non-degenerate cases, as defined in Remark \ref{degnondeg}.
\begin{lem}
Let $\pi$ and $\varpi$ be non-zero polynomial divided difference operators as in Lemma \ref{expansions} for which the cubic braid relation holds, and assume that $Q_{0}$ and $\widetilde{Q}_{0}$ do not vanish. Then the polynomials $T$ and $\widetilde{T}$ coincide, and $Q_{0}$ and $\widetilde{Q}_{0}$ are almost equal, as in Definition \ref{almosteq}. \label{Qneq0}
\end{lem}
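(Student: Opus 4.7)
The plan is to extract both conclusions by matching coefficients on the two sides of the cubic braid relation, using the explicit expansions provided by Lemma \ref{expansions}, and then to exploit the UFD structure of the polynomial ring.

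For $T=\widetilde{T}$, I would compare the coefficients of $\sigma sf$ on both sides. From the expansion of $\pi\varpi\pi f$ this coefficient equals $T_{xy}\widetilde{Q}^{0}_{yz}Q^{0}_{xz}$ over the common denominator $(x-y)(x-z)(y-z)$, and from the expansion of $\varpi\pi\varpi f$ it equals the analogous expression with $T_{xy}$ replaced by $\widetilde{T}_{xy}$. Since $\widetilde{Q}^{0}_{yz}$ and $Q^{0}_{xz}$ are nonzero polynomials, cancelling them in the integral domain of polynomials yields $T_{xy}=\widetilde{T}_{xy}$.

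For the almost equality, I would compare the coefficients of $s\sigma sf$ (in $\pi\varpi\pi f$) and $\sigma s\sigma f$ (in $\varpi\pi\varpi f$), which correspond to the same function via $s\sigma s=\sigma s\sigma$. This produces the polynomial identity
\[
Q^{0}(x,y)\,\widetilde{Q}^{0}(x,z)\,Q^{0}(y,z) \;=\; \widetilde{Q}^{0}(x,y)\,Q^{0}(x,z)\,\widetilde{Q}^{0}(y,z).
\]
Following the pure/univariate dichotomy described after Definition \ref{almosteq}, I would factor $Q^{0}(x,y)=a(x)\,b(y)\,\widehat{Q}(x,y)$ and $\widetilde{Q}^{0}(x,y)=\widetilde{a}(x)\,\widetilde{b}(y)\,\widehat{\widetilde{Q}}(x,y)$, with $\widehat{Q}$ and $\widehat{\widetilde{Q}}$ products of pure irreducibles.

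Now I would argue that for any pure irreducible $\alpha(x,y)$, its multiplicity on each side of the identity can only be fed by $\widehat{Q}(x,y)$ on the left and $\widehat{\widetilde{Q}}(x,y)$ on the right, since the remaining factors involve variable pairs $(x,z)$ or $(y,z)$ and the univariate factors carry no pure content. Matching these multiplicities (and symmetrically those of $\alpha(x,z)$ and $\alpha(y,z)$) forces $\widehat{\widetilde{Q}}=c\widehat{Q}$ for some nonzero scalar $c$, which I absorb into $\widetilde{a}$ so that $\widehat{\widetilde{Q}}=\widehat{Q}$. Substituting back and cancelling the nonzero factors $a(x)\widetilde{a}(x)$, $b(z)\widetilde{b}(z)$, $\widehat{Q}(x,y)$, $\widehat{Q}(x,z)$, $\widehat{Q}(y,z)$ from both sides collapses the identity to $a(y)\,b(y)=\widetilde{a}(y)\,\widetilde{b}(y)$. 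Setting $q_{l}=a$, $q_{r}=b$, $\widetilde{q}_{l}=\widetilde{a}$, $\widetilde{q}_{r}=\widetilde{b}$, this is precisely the product property of Definition \ref{almosteq}, establishing the almost equality. The main subtlety is the UFD multiplicity-matching step, but once the pure factors are isolated it proceeds essentially as bookkeeping.
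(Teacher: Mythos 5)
Your proposal is correct and follows essentially the same route as the paper: the equality $T=\widetilde{T}$ is read off from the coefficient of $\sigma sf$ (the paper also notes the $s\sigma f$ option), and the almost equality comes from the identity $Q^{0}_{xy}\widetilde{Q}^{0}_{xz}Q^{0}_{yz}=\widetilde{Q}^{0}_{xy}Q^{0}_{xz}\widetilde{Q}^{0}_{yz}$ obtained from the $s\sigma sf=\sigma s\sigma f$ coefficients, followed by the same pure-versus-univariate UFD analysis. The only (cosmetic) difference is that you match multiplicities of pure irreducibles all at once, whereas the paper cancels them iteratively; the final cancellation yielding the product property $q_{l}q_{r}=\widetilde{q}_{l}\widetilde{q}_{r}$ is identical.
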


\begin{proof}
We note either that the coefficients of $s\sigma f$ on the two sides of Lemma \ref{expansions} are $Q^{0}_{xy}\widetilde{Q}^{0}_{xz}T_{yz}$ and $Q^{0}_{xy}\widetilde{Q}^{0}_{xz}\widetilde{T}_{yz}$, or that those of $\sigma sf$ are $Q^{0}_{xz}\widetilde{Q}^{0}_{yz}\widetilde{T}_{xy}$ and $Q^{0}_{xz}\widetilde{Q}^{0}_{yz}\widetilde{T}_{xy}$. Since the $Q_{0}$-polynomials are the same on both sides and are assumed not to vanish, the first assertion follows.

For the second one, note that comparing the coefficients of $s\sigma sf=\sigma s\sigma f$ yields the equality $Q^{0}_{xy}\widetilde{Q}^{0}_{xz}Q^{0}_{yz}=\widetilde{Q}^{0}_{xy}Q^{0}_{xz}\widetilde{Q}^{0}_{yz}$, with none of the multipliers vanishing. Consider now a pure irreducible polynomial $H$ in the variables $x$ and $y$ (for which we write $H_{xy}$ for $H(x,y)$ and similarly for its values in other pairs of variables). By purity, the polynomials $H_{xy}$, $H_{xz}$, and $H_{yz}$ are all distinct irreducible polynomials, and every polynomial that one of them divides must involve both variables showing up in it. Assume that $H$ divides $Q_{0}$, and then $H_{xz}$ divides $Q^{0}_{xz}$ and thus it divides the right hand side, hence it divides the left hand side as well. As it cannot divide $Q_{xy}$ or $Q_{yz}$ (as neither involve both $x$ and $z$), it divides $\widetilde{Q}^{0}_{xz}$ and thus $H$ divides $\widetilde{Q}_{0}$. Similarly if such $H$ divides $\widetilde{Q}_{0}$ then it divides $Q_{0}$, and by canceling out $H_{xy}H_{xz}H_{yz}$ for such $H$ from both sides and applying the same argument until no pure irreducible polynomial divides $Q_{0}$ and $\widetilde{Q}_{0}$, we deduce that the parts of $Q_{0}$ and $\widetilde{Q}_{0}$ that are based on produces of powers of pure irreducible polynomials are the same.

The remaining irreducible polynomials dividing $Q_{0}$ and $\widetilde{Q}_{0}$ are thus univariate (or involve neither $x$ nor $y$ nor $z$), so that by gathering the product of the pure irreducible polynomials into a polynomial $\widehat{Q}$ and the univariate ones as well, we obtain non-zero univariate polynomials $q_{l}$, $q_{r}$, $\widetilde{q}_{l}$, and $\widetilde{q}_{r}$ such that the presentation in Definition \ref{almosteq} holds (the irreducible polynomials that depend on neither variable can be put arbitrarily into the univariate products). Substituting these into the equality $Q^{0}_{xy}\widetilde{Q}^{0}_{xz}Q^{0}_{yz}=\widetilde{Q}^{0}_{xy}Q^{0}_{xz}\widetilde{Q}^{0}_{yz}$, and noting that $\widehat{Q}_{xy}\widehat{Q}_{xz}\widehat{Q}_{yz}$ cancels from both sides as do $q_{l}(x)\widetilde{q}_{l}(x)$ and $q_{r}(z)\widetilde{q}_{r}(z)$, we obtain the equality $q_{l}(y)q_{r}(y)=\widetilde{q}_{l}(y)\widetilde{q}_{r}(y)$, establishing the product property as well. This proves the lemma.
\end{proof}
It is clear from the proof of Lemma \ref{Qneq0} that if $\widetilde{T}=T$ and $Q_{0}$ and $\widetilde{Q}_{0}$ are almost equal, then the coefficients multiplying $s\sigma f$, $\sigma sf$, and $s\sigma sf=\sigma s\sigma f$ on both sides of Lemma \ref{expansions} are the same.

Using Lemma \ref{Qneq0}, we can immediately deduce the form involving one type of degenerate polynomial divided difference operator.
\begin{prop}
Let $\{\pi_{i}\}_{i=1}^{n-1}$ be a family of non-zero polynomial divided difference operators that satisfy the cubic braid relations. If the polynomial $T$ associated with one of these operators vanishes then it does for all of them. In this case each $\pi_{i}f$ equals $R_{i} \cdot s_{i}f$ for a polynomial $R_{i}$, such that $R_{i}$ and $R_{i+1}$ are almost equal in the variables $x_{i}$ and $x_{i+1}$ for every $1 \leq i \leq n-2$. \label{degenT}
\end{prop}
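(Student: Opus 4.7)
The plan is to prove the two assertions in sequence: first that vanishing of $T$ propagates across the family, and then, after reducing each operator to the form $\pi_{i}f = R_{i} \cdot s_{i}f$, that almost equality of consecutive $R_{i}$'s follows from a direct analysis of the cubic braid relation.

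For the propagation step I would apply Lemma \ref{expansions} to the pair $\pi = \pi_{i}$ (with $T = 0$) and $\varpi = \pi_{i+1}$, and compare the coefficients of $f$ on the two sides of the cubic braid relation. Every term in the $f$-coefficient of $\varpi\pi\varpi f$ carries an explicit factor of $T_{xy}$ or $T_{xz}$, so it vanishes identically. On the $\pi\varpi\pi$ side the $T_{xy}^{2}$ summand drops and the coefficient reduces to $-\widetilde{T}_{xz}Q^{0}_{xy}Q^{0}_{yx}(y-z)/[(x-y)^{2}(x-z)(y-z)]$. Remark \ref{degnondeg} applied to the non-zero operator $\pi$ with vanishing $T$ gives $Q_{0}(x,y) = -(x-y)R_{0}(x,y)$ with $R_{0}\neq 0$, so $Q^{0}_{xy}$ and $Q^{0}_{yx}$ are both non-zero polynomials, and the equality forces $\widetilde{T} = 0$. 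The same argument applied to the pair $\pi_{i-1}$, $\pi_{i}$ propagates the vanishing of $T$ in the other direction, and induction yields $T = 0$ for every $\pi_{j}$.

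Once all the $T$-polynomials vanish, Remark \ref{degnondeg} gives $\pi_{j}f = R_{j} \cdot s_{j}f$ with $R_{j}\neq 0$ for every $j$. To verify the second assertion I would compute both sides of the cubic braid relation directly in the $(x,y,z)$ notation. Using that $s$ and $\sigma$ are ring automorphisms interchanging the pairs $(x,y)$ and $(y,z)$ respectively, three successive applications yield
\[
\pi\varpi\pi f = R(x,y)R(y,z)\widetilde{R}(x,z) \cdot s\sigma s f \qquad \text{and} \qquad \varpi\pi\varpi f = \widetilde{R}(x,y)\widetilde{R}(y,z)R(x,z) \cdot \sigma s\sigma f.
\]
Since $s\sigma s = \sigma s\sigma$ as permutations, the cubic braid relation reduces to the polynomial identity
\[
R(x,y)R(y,z)\widetilde{R}(x,z) = \widetilde{R}(x,y)\widetilde{R}(y,z)R(x,z).
\]

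The remaining work is a factor analysis of this identity, running in exact parallel to the argument from Lemma \ref{Qneq0}. For any pure irreducible polynomial $H$ in two variables dividing $R$, $H_{xy}$ divides the left side and cannot divide $\widetilde{R}_{yz}$ or $R_{xz}$, so $H \mid \widetilde{R}$; matching the multiplicities of $H_{xy}$, $H_{xz}$, $H_{yz}$ on both sides then forces equal exponents. The pure parts of $R$ and $\widetilde{R}$ may therefore be gathered into a common polynomial $\widehat{R}$, and writing $R(x,y) = q_{l}(x)q_{r}(y)\widehat{R}(x,y)$ and $\widetilde{R}(x,y) = \widetilde{q}_{l}(x)\widetilde{q}_{r}(y)\widehat{R}(x,y)$, substituting into the identity and cancelling the non-zero factors $\widehat{R}$ as well as $q_{l}(x)\widetilde{q}_{l}(x)$ and $q_{r}(z)\widetilde{q}_{r}(z)$ leaves the univariate equality $q_{l}(y)q_{r}(y) = \widetilde{q}_{l}(y)\widetilde{q}_{r}(y)$, which is exactly the product property of Definition \ref{almosteq}. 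The main obstacle is the careful bookkeeping in this factor analysis, but its structure is essentially identical to that already carried out for $Q_{0}$ and $\widetilde{Q}_{0}$ in Lemma \ref{Qneq0}.
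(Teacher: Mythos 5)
Your proof is correct, and its overall architecture is parallel to the paper's: propagate the vanishing of $T$ by comparing coefficients in Lemma \ref{expansions}, then extract the product identity $R_{xy}R_{yz}\widetilde{R}_{xz}=\widetilde{R}_{xy}\widetilde{R}_{yz}R_{xz}$ and run the UFD factor analysis from Definition \ref{almosteq}. There are two genuine (if small) deviations. For the propagation you compare the coefficients of $f$; this works because $Q^{0}_{xy}Q^{0}_{yx}=-(x-y)^{2}R^{0}_{xy}R^{0}_{yx}$ is a non-zero polynomial once $\pi\neq0$ and $T=0$, so the surviving term $\widetilde{T}_{xz}Q^{0}_{xy}Q^{0}_{yx}(y-z)$ forces $\widetilde{T}=0$ (and symmetrically in the other direction). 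The paper instead notes that when $T=0$ the terms in $sf$, $\sigma f$, $s\sigma f$, and $\sigma sf$ disappear entirely from $\pi\varpi\pi f$, while $\varpi\pi\varpi f$ still contains $\widetilde{T}_{yz}Q^{0}_{xy}[\widetilde{T}_{xz}sf-\widetilde{Q}^{0}_{xz}s\sigma f]$, which forces $\widetilde{T}=0$ without needing to know that a product of two non-zero polynomials is non-zero. Both are one-line comparisons. For the almost equality, the paper cites Lemma \ref{Qneq0} to conclude that $Q_{i}$ and $Q_{i+1}$ are almost equal and then cancels the pure irreducible factor $x_{i+1}-x_{i}$ from the $\widehat{Q}$-part; you instead recompute the triple compositions from scratch (which is immediate for operators of the form $f\mapsto R\cdot sf$, since $s$ and $\sigma$ are ring automorphisms) and rerun the divisibility argument on the $R$'s directly. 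Your route is self-contained and avoids the heavier expansions of Lemma \ref{expansions} in the second half, at the cost of repeating verbatim the factor analysis that Lemma \ref{Qneq0} already packages; note that your product identity is exactly the one of that lemma after dividing out $(x-y)(x-z)(y-z)$. Either way the argument is complete.
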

Note that the polynomials in Equation \eqref{poldifop}, and thus all those determined in Proposition \ref{canform} and Equation \eqref{Tpol}, may depend on the variables $x_{i}$ and $x_{i+1}$, as well as additional variables that are not any of the $x_{j}$'s, but not on any $x_{j}$ for $j\not\in\{i,i+1\}$. Therefore there is no problem in combining the transitivity of the equivalence relation from Definition \ref{almosteq} with changing the indices of the variables, as long as it is done on both sides of the same comparison in a compatible manner.

\begin{proof}
Consider $\pi=\pi_{i}$ and $\varpi=\pi_{i+1}$ as in Lemma \ref{expansions}. If $T=0$ then $sf$, $\sigma f$, $s\sigma f$, and $\sigma sf$ do not show up at all in the expression for $\pi\varpi\pi f$, while the one for $\varpi\pi\varpi f$ contains the term $\widetilde{T}_{yz}Q^{0}_{xy}[\widetilde{T}_{xz}sf-\widetilde{Q}^{0}_{xz}s\sigma f]$. Since we have $Q^{0}_{xy}\neq0$ and either $\widetilde{T}_{xz}\neq0$ or $\widetilde{Q}^{0}_{xz}\neq0$ by non-vanishing, it follows that $\widetilde{T}=0$. Conversely, the vanishing of $\widetilde{T}$ implies that $\varpi\pi\varpi f$ does not involve $sf$, $\sigma f$, $s\sigma f$, and $\sigma sf$, and we argue similarly using the term $T_{xy}\widetilde{Q}^{0}_{yz}[T_{xz}\sigma f-Q^{0}_{xz}\sigma sf]$ from $\pi\varpi\pi f$. By applying this argument to every $1 \leq i \leq n-1$, the first assertion is established.

Now, we recall from Equation \eqref{Tpol}, with $P=0$, $Q=Q_{0}$, and $R=R_{0}$ as in Proposition \ref{canform}, that if $T=0$ then $Q_{0}(x_{i},x_{i+1})$ reduces to $(x_{i+1}-x_{i})R_{0}(x_{i},x_{i+1})$. Moreover, the non-vanishing of all the operators implies that none of the $Q_{0}$ polynomials vanishes, and to put the dependence of $i$ back into the notation we write $Q_{i}$ and $R_{i}$ for the polynomials $Q_{0}$ and $R_{0}$ that are associated with $\pi_{i}$, so that $Q_{i}=(x_{i+1}-x_{i})R_{i}$. The formula from the proof of Corollary \ref{Tcan} then expresses $\pi_{i}f$ as $R_{i} \cdot s_{i}f$, as asserted.

But Lemma \ref{Qneq0} implies, via non-vanishing, that $Q_{i}$ and $Q_{i+1}$ are almost equal in the variables $x_{i}$ and $x_{i+1}$ for every $1 \leq i \leq n-2$. As the polynomial $x_{i+1}-x_{i}$ is a pure irreducible polynomial that divides $Q_{i}$, we can cancel it from the corresponding $\widehat{Q}$-part in Definition \ref{almosteq}, and obtain the same property for the $R_{i}$'s (and as already mentioned, moving the indices of the variables compatibly does not interfere with this argument). This proves the proposition.
\end{proof}
The operators from Proposition \ref{degenT} are those obtained from Equation \eqref{poldifop} by the substitution $P=Q=R=0$, since the proof of Proposition \ref{canform} shows that in this case we have $R_{0}=S$ and $Q_{0}(x_{i},x_{i+1})=(x_{i+1}-x_{i})S(x_{i},x_{i+1})$. Proposition \ref{degenT} states, in fact, that when a family of non-zero polynomial divided difference operators satisfying the cubic braid relations involves such an operator, then all the operators are of this sort, and the different $S$-polynomials are almost equal.

\section{The Non-Degenerate Operators \label{NonDeg}}

Since the other degenerate cases eventually lead to more complicated results, we now turn to the families of non-degenerate polynomial divided difference operators. Recall from Remark \ref{degnondeg} that this means that for every operator $\pi_{i}$ in the family, neither the polynomial $Q_{0}$ from Proposition \ref{canform} nor the polynomial $T$ from Equation \eqref{Tpol} vanishes.

We adopt again the notation with $x$, $y$, $z$, $s$, $\sigma$, $\partial$, $\delta$, $\pi$, $\varpi$, etc., where now we have, by Lemma \ref{Qneq0}, the same polynomial $T$ for all the operators. We shall keep using the index notation like $T_{xy}$ for $T(x,y)$, and write the polynomials associated with $\pi$ and $\varpi$, as $Q_{0}$ and $\widetilde{Q}_{0}$ respectively (they are almost equal by Lemma \ref{Qneq0}). We now deduce from Lemma \ref{expansions} the following result.
\begin{prop}
A necessary condition for the braid relation $\pi\varpi\pi=\varpi\pi\varpi$ is, when neither $Q_{0}$ nor $\widetilde{Q}_{0}$ nor the joint polynomial $T$ vanishes, that the polynomial $T$ is of the form $T_{xy}=axy+bx+cy+d$ for constants $a$, $b$, $c$, and $d$. \label{sfsigmaf}
\end{prop}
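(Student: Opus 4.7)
The plan is to leverage Lemma~\ref{Qneq0} to reduce the braid identity $\pi\varpi\pi=\varpi\pi\varpi$ to a pure polynomial equation in $T$ alone, and then extract the degree restriction by a leading-term analysis in one of the variables at a time.

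First, by Lemma~\ref{Qneq0} I may take $\widetilde T=T$ throughout the expressions in Lemma~\ref{expansions}. I would then compare the coefficients of $sf$ on both sides of the braid equality. Reading off Lemma~\ref{expansions}, this coefficient is $-Q_{xy}^{0}\bigl[T_{xy}T_{yz}(x-z)-T_{yx}T_{xz}(y-z)\bigr]$ divided by $(x-y)^{2}(x-z)(y-z)$ on the left, and $-Q_{xy}^{0}T_{yz}T_{xz}$ divided by $(x-y)(x-z)(y-z)$ on the right. Clearing the common denominator and dividing by the non-zero factor $Q_{xy}^{0}$ produces the polynomial identity
\[T_{xy}T_{yz}(x-z)-T_{yx}T_{xz}(y-z)=(x-y)T_{yz}T_{xz}.\]

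Next, I set $d_{2}:=\deg_{y}T$ and let $\lambda(u)$ denote the (non-zero) leading coefficient of $T(u,v)$ viewed as a polynomial in $v$. Since $T_{xy}$ and $T_{yx}$ are independent of $z$, the left hand side of this identity has $z$-degree at most $d_{2}+1$, whereas $T_{yz}$ and $T_{xz}$ each have $z$-degree $d_{2}$ with leading coefficients $\lambda(y)$ and $\lambda(x)$, so the right hand side has $z$-degree exactly $2d_{2}$ with leading coefficient $(x-y)\lambda(x)\lambda(y)\neq 0$. Since the identity must hold as polynomials in $z$, this forces $2d_{2}\leq d_{2}+1$, i.e.\ $d_{2}\leq 1$.

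Finally, I would repeat the procedure with the coefficient of $\sigma f$, whose common non-vanishing factor is $\widetilde Q_{yz}^{0}$, obtaining the companion identity
\[T_{xy}T_{xz}(y-z)=T_{xy}T_{yz}(x-z)-T_{zy}T_{xz}(x-y);\]
a symmetric leading-term analysis in $x$ (the left hand side now has $x$-degree $2d_{1}$ with non-zero leading coefficient $\mu(y)\mu(z)(y-z)$, where $d_{1}:=\deg_{x}T$ and $\mu$ is the leading $x$-coefficient of $T$, while the right hand side has $x$-degree at most $d_{1}+1$) yields $d_{1}\leq 1$. Combining the two bounds, $T$ has degree at most one in each argument, and hence $T_{xy}=axy+bx+cy+d$ for some constants $a,b,c,d$, as claimed. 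I foresee no real obstacle: the only care needed is to verify that the factors being cancelled are non-zero (guaranteed by the non-degeneracy hypotheses) and that the leading coefficients $\lambda,\mu$ in the degree analysis are genuinely non-vanishing polynomials in the remaining variable.
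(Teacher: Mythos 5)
Your proof is correct, and its skeleton is the same as the paper's: after invoking Lemma~\ref{Qneq0} to set $\widetilde{T}=T$, you read off the coefficients of $sf$ and of $\sigma f$ in Lemma~\ref{expansions}, cancel the non-vanishing factors $Q^{0}_{xy}$ and $\widetilde{Q}^{0}_{yz}$, and obtain exactly the two polynomial identities that the paper packages as the chain in Equation \eqref{relswithT}; both arguments then finish by a leading-term comparison. Where you diverge is in how the degree bound is organized. You bound the $z$-degree of the first identity and the $x$-degree of the second separately, each giving an inequality of the form $2d\leq d+1$ and hence $\deg T\leq1$ in each argument directly; this is slightly more economical, since by working in an ``outer'' variable you never need to compare $\deg_{y}T_{xy}$ with $\deg_{y}T_{yx}$. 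The paper instead first subtracts the two outer expressions of \eqref{relswithT}, cancels $T_{xz}$, and deduces $\partial T_{xy}=\delta T_{yz}=\mu$ for a constant $\mu$, so that $T_{xy}=\mu x+V(x,y)$ with $V$ symmetric, and only then runs a degree count in the middle variable $y$ to bound the variable degree of $V$. The intermediate identity $\partial T=\mu$ is not wasted there: it is precisely the Hecke parameter reused in the proofs of Proposition~\ref{lastcoeff}, Corollary~\ref{forHecke}, and Proposition~\ref{Hecke}, so with your route one would simply recover it afterwards from the explicit form $T_{xy}=axy+bx+cy+d$ (namely $\mu=b-c$).
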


\begin{proof}
Note that $sf$ shows up on both expressions in Lemma \ref{expansions} with the multiplier $Q^{0}_{xy}$, and for $\sigma f$ we have a multiplier of $\widetilde{Q}^{0}_{yz}$. By removing denominators in both comparisons, canceling these multipliers (which are non-zero by non-degeneracy), and moving sides, we get the equalities
\begin{equation}
T_{xz}[T_{yx}(y-z)+T_{yz}(x-y)]=T_{xy}T_{yz}(x-z)=T_{xz}[T_{zy}(x-y)+T_{xy}(y-z)]. \label{relswithT}
\end{equation}
The equality between the two extremal expressions in Equation \eqref{relswithT} reduces, after canceling $T_{xz}$ (which is also non-zero by non-degeneracy), to the equality $(T_{xy}-T_{yx})(y-z)=(T_{yz}-T_{zy})(x-y)$, which amounts to $\partial T_{xy}=\delta T_{yz}$ after dividing both sides by $(x-y)(y-z)$. But as the first expression is symmetric in $x$ and $y$ and independent of $z$, and the second one is symmetric in $y$ and $z$ and independent of $x$, we deduce that this equality can hold only if both sides equal a constant, which we denote by $\mu$. It follows, via the proof of Lemma \ref{symimdi}, that $T_{xy}=T(x,y)$ must be of the form $\mu x+V(x,y)$ where $V$ is a symmetric polynomial in two variables.

We now claim that the variable degree $r$ of the symmetric polynomial $V$, defined in Remark \ref{vardeg}, is at most 1. Indeed, if $r>1$ then the degree in $y$ (ignoring powers of $x$ and $z$) of $T_{xy}$, $T_{yx}$, $T_{yz}$, and $T_{zy}$ is $r$, while it is 1 for $x-y$ and $y-z$ and 0 for $T_{xz}$. But then the degree in $y$ of the extremal expressions in Equation \eqref{relswithT} is $r+1$, while it is $2r>r+1$ for the middle one and this equation cannot hold. This proves the claim, from which it follows, via the symmetry of $V$, that $V(x,y)$ must be of the form $axy+c(x+y)+d$ where $a$, $c$, and $d$ are constants. By adding $\mu x$ and setting $b:=c+\mu$, the desired formula for $T_{xy}$ follows. This completes the proof of the proposition.
\end{proof}

For obtaining the next property, we shall need the following technical result, for which we recall that $\lceil t \rceil$ is the minimal integer that is not smaller than the real number $t$, and $\lfloor t \rfloor$ is the maximal integer that is not larger than $t$.
\begin{lem}
Let $B$ be a symmetric polynomial in two variables $x$ and $y$ that is homogeneous of degree $m$, and let $r$ be its variable degree. Then the only symmetric polynomials $A$ that are homogeneous of degree $m$ and such that the variable degree of $A(A+B)$ is less than $r+\lceil m/2 \rceil$ are $a=0$ and $A=-B$, with vanishing product. In particular, if $C$ is a non-zero symmetric polynomial that is homogeneous of degree $2r$ and has variable degree less than $r+\lceil m/2 \rceil$ then the equation $A(A+B)+C=0$ has no solution $A$. \label{vanprod}
\end{lem}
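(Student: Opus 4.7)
The plan is to derive the lemma from a short multiplicativity property of the variable degree under products of symmetric polynomials, followed by a case analysis on the variable degree $r_A$ of $A$ versus $r$.

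First I would establish the following sub-lemma: for any two nonzero symmetric homogeneous polynomials $F,G$ in $x,y$ with variable degrees $r_F,r_G$, the variable degree of $FG$ is exactly $r_F+r_G$. The verification is immediate: writing the unique monomial of top $x$-degree in $F$ as $fx^{r_F}y^{\deg F-r_F}$ with $f\neq 0$, and analogously for $G$ with coefficient $g\neq 0$, the resulting product $fg\,x^{r_F+r_G}y^{\deg F+\deg G-r_F-r_G}$ is the only monomial in $FG$ with $x$-exponent $r_F+r_G$, so its coefficient is $fg\neq 0$.

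Next I would assume that both $A$ and $A+B$ are nonzero (the complementary case gives $A=0$ or $A=-B$, each yielding vanishing product) and bound the variable degree $r_A+r_{A+B}$ of $A(A+B)$ below by $r+\lceil m/2\rceil$. I would split into three cases. If $r_A>r$, the top $x^{r_A}$-term of $A$ cannot cancel inside $A+B$, so $r_{A+B}=r_A$ and the sum is $2r_A\ge 2r+2>r+\lceil m/2\rceil$, using $r\ge\lceil m/2\rceil$ from Remark \ref{vardeg}. If $r_A<r$, symmetrically $r_{A+B}=r$ and the sum is $r_A+r\ge\lceil m/2\rceil+r$. The subtle case is $r_A=r$, where cancellation can genuinely occur in the leading $x^r$-term of $A+B$; here I would invoke Remark \ref{vardeg} applied to $A+B$ itself, giving $r_{A+B}\ge\lceil m/2\rceil$ since $A+B$ is nonzero, and the required bound follows. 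In every case the hypothesis is contradicted, forcing $A=0$ or $A=-B$.

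For the ``in particular'' statement I would argue by contradiction. If $A(A+B)+C=0$ then $A(A+B)=-C$; matching homogeneity degrees ($2m$ versus $2r$) leaves only two possibilities, namely $m\neq r$ with $A(A+B)=0=C$, contradicting $C\neq 0$, or $m=r$, in which case $A(A+B)$ has the same variable degree as $C$, less than $r+\lceil m/2\rceil$, so the first part of the lemma gives $A\in\{0,-B\}$ and then $A(A+B)=0$ forces $C=0$, again a contradiction. The main obstacle I anticipate is handling the case $r_A=r$ correctly: a naive estimate of the variable degree of $A+B$ purely from $r_A$ and $r$ can fail because of top-term cancellation, and the cleanest fix is to pull the universal lower bound $\lceil m/2\rceil$ from Remark \ref{vardeg} applied directly to the nonzero polynomial $A+B$ rather than to $A$ or $B$ individually.
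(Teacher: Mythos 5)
Your proof is correct, but it follows a genuinely different and much shorter route than the paper's. The paper expands $A$ and $B$ in the explicit basis $\bigl\{\sum_{l=k-j}^{k+\varepsilon+j}x^{l}y^{m-l}\bigr\}_{j=0}^{h}$ of symmetric homogeneous polynomials, converts the variable-degree hypothesis into divisibility of $A(A+B)$ by $(xy)^{2k-h+1}$, computes the pairwise products of the basis elements, and runs a decreasing induction on the coefficients to force either $\alpha_{l}=0$ for all $l$ or $\alpha_{l}=-\beta_{l}$ for all $l$. You replace all of this with two facts: additivity of the variable degree under products of non-zero symmetric polynomials (it is just the degree in $x$, which is additive over a field), and the universal lower bound $\lceil m/2\rceil$ on the variable degree of any non-zero symmetric homogeneous polynomial of degree $m$ coming from Remark \ref{vardeg}. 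Your trichotomy on $r_{A}$ versus $r$ is sound; the only delicate case is $r_{A}=r$, where the leading terms of $A$ and $B$ may cancel, and you correctly sidestep this by applying Remark \ref{vardeg} to the non-zero polynomial $A+B$ itself rather than trying to read its variable degree off those of $A$ and $B$. Your route also makes transparent why $r+\lceil m/2\rceil$ is the sharp threshold: it is the least possible value of $r_{A}+r_{A+B}$ when neither factor vanishes. The paper's computation yields explicit coefficient information, but nothing beyond the bare statement is used later, so nothing is lost by your shortcut. Your handling of the ``in particular'' clause is likewise valid, and in fact slightly more careful than necessary: once $A(A+B)=-C\neq0$, the variable-degree hypothesis transfers directly to $A(A+B)$ and the first part applies, without any need to match the homogeneity degree $2m$ against $2r$.
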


\begin{proof}
If we write the homogeneity degree $m$ as $2k+\varepsilon$ with $\varepsilon\in\{0,1\}$, then the equality $2r \geq m$ implies that $r=k+\varepsilon+h$ for some $0 \leq h \leq k$ (since $m-r$ then equals $k-h$). It follows that the $B$ is spanned by the polynomials $\big\{\sum_{l=k-j}^{k+\varepsilon+j}x^{l}y^{m-l}\big\}_{j=0}^{h}$ (since the variable degree of the $j$th polynomial here is $k+\varepsilon+j$), and if the spanning coefficients for $B$ are $\{\beta_{i}\}_{i=0}^{h}$ then $\beta_{h}\neq0$ by the assumption on the variable degree of $B$. Moreover, when looking for such a polynomial $A$, note that if $A$ has variable degree larger than $r$ then so is the variable degree of $A+B$, and thus $A(A+B)$ has variable degree that is larger than $2r$ hence than $r+\lceil m/2 \rceil$. Thus we may restrict attention to polynomials $A$ that have variable degree at most $r$, meaning that we can write $A(x,y)$ as $\sum_{j=0}^{h}\alpha_{j}\sum_{l=k-j}^{k+\varepsilon+j}x^{l}y^{r-l}$ with some coefficients $\{\alpha_{j}\}_{j=0}^{h}$ as well. The coefficients for $A+B$ are thus $\{\alpha_{i}+\beta_{i}\}_{i=0}^{h}$.

Now, the assumption on the variable degree of the product is equivalent, via the fact that $m-\lceil m/2 \rceil=\lfloor m/2 \rfloor$, to the assertion that the product is divisible by $(xy)^{m+\lfloor m/2 \rfloor-r+1}$. Using $k$ and $h$, this exponent is $2k-h+1$. This property is unaffected by multiplying by $x-y$, and note that the $j$th basis polynomial is $\partial(x^{k+\varepsilon+j+1}y^{k-j})$, or equivalently $(xy)^{k-j}\partial(x^{2j+\varepsilon+1})$, namely multiplying it by $x-y$ produces $(xy)^{k-j}(x^{2j+\varepsilon+1}-y^{2j+\varepsilon+1})$. Moreover, the product of two such polynomials, say the $i$th and the $j$th, is \[(xy)^{2k-i-j}\!\big(x^{2(i+j)+\varepsilon+1}+y^{2(i+j)+\varepsilon+1}\!-(xy)^{2\min\{i,j\}+\varepsilon+1}(x^{2\max\{i,j\}}+y^{2\max\{i,j\}})\big)\!,\] and as in the last terms the total exponent is $2k-|i-j|+\varepsilon+1$, which is at least $2k-h+1$ when $i$ and $j$ are between 0 and $k$, we may ignore these terms in our calculations.

It follows that the divisibility of $A(A+B)$ by $(xy)^{2k-h+1}$ is equivalent to the latter monomial dividing the expression \[\textstyle{\sum_{i=0}^{h}\sum_{j=0}^{j}\alpha_{j}(\alpha_{i}+\beta_{i})(xy)^{2k-i-j}(x^{2(i+j)+\varepsilon+1}+y^{2(i+j)+\varepsilon+1})},\] where the expression corresponding to $i$ and $j$ depends only on $i$ and $j$ and is divisible by precisely $(xy)^{2k-i-j}$ and no higher power of $xy$. It follows that the total coefficient that expression must vanish whenever $i+j \geq h$ (the terms with $i+j>h$ are divisible by the monomial in question. This implies, with $i+j=2h-p$, the equality $\sum_{i=h-p}^{h}\alpha_{2h-p-i}(\alpha_{i}+\beta_{i})=0$ for every $0 \leq p \leq h$.

Now, for $p=0$ this is $\alpha_{h}(\alpha_{h}+\beta_{h})=0$, meaning that either $\alpha_{h}=0$ or $\alpha_{h}+\beta_{h}=0$ (but not both, since $\beta_{h}\neq0$ by assumption). We prove, by decreasing induction on $l$, that $\alpha_{l}=0$ in the first case and $\alpha_{l}+\beta_{l}=0$ in the second one, where we have established the basis of the induction with $l=h$. Assume that $l<h$ and that the assertion holds for every index $l<j \leq h$, and consider the equality with $p=h-l$. In the first case we have $\alpha_{2h-p-i}=\alpha_{h+l-i}=0$ for every $h-l \leq i<h$ by the induction hypothesis, and the last summand, with $i=h$, is $\alpha_{l}$ times the non-zero expression $\alpha_{h}+\beta_{h}=\beta_{h}$, implying that $\alpha_{l}=0$ as desired. In the second case the induction hypothesis yields $\alpha_{i}+\beta_{i}=0$ for every $l<i \leq h$, with the remaining summand, associated with $i=l$, being $\alpha_{l}+\beta_{l}$ times $\alpha_{h}=-\beta_{h}\neq0$, so that $\alpha_{l}+\beta_{l}$ as needed. This proves our claim.

But then the first case yields $A=0$, and the second one produces $A=-B$, which establishes the first assertion. The second one thus follows from the first, since we then get $A(A+B)=0$ and thus $A(A+B)+C=C\neq0$ by the assumption on $C$. This completes the proof of the lemma.
\end{proof}

We can now deduce another property that is required for our relation.
\begin{prop}
For $T$ as in Proposition \ref{sfsigmaf}, and with $R_{+}$ and $\widetilde{R}_{+}$ that are $\partial$-positive (or $\delta$-positive), if the cubic braid relation holds then $R^{+}_{xy}$ must be just a multiple of $x$, $\widetilde{R}^{+}_{yz}$ has to be a multiple of $y$, and $\partial P_{xy}$ and $\delta\widetilde{P}_{yz}$ have degree at most 1. \label{lastcoeff}
\end{prop}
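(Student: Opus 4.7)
The plan is to match the coefficient of $f$ on both sides of Lemma \ref{expansions} --- the only comparison not yet used, after the coefficients of $sf$, $\sigma f$, $s\sigma f$, $\sigma sf$, and $s\sigma sf=\sigma s\sigma f$ produced Lemma \ref{Qneq0} and Proposition \ref{sfsigmaf} --- and to turn the resulting identity into a total-degree bound on the polynomial $R_{0}$ of Proposition \ref{canform}.

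First I would set the two coefficients of $f$ equal (with $\widetilde{T}=T$), clear the denominators $(x-y)^{2}(x-z)(y-z)^{2}$, and use Equation \eqref{relswithT} in the form $T_{xy}T_{yz}(x-z)=T_{xz}[T_{yx}(y-z)+T_{yz}(x-y)]$ to extract an explicit factor of $T_{xz}$. After canceling $T_{xz}$ (non-zero by non-degeneracy) and using $T_{xy}-T_{yx}=(b-c)(x-y)$, the identity should reduce to
\[(y-z)^{2}\bigl[Q^{0}_{xy}Q^{0}_{yx}-T_{xy}T_{yx}\bigr]=(x-y)^{2}\bigl[\widetilde{Q}^{0}_{yz}\widetilde{Q}^{0}_{zy}-T_{yz}^{2}+(b-c)(y-z)T_{yz}\bigr].\]
The two sides depend on disjoint pairs of variables (apart from the explicit factors $(y-z)^{2}$ and $(x-y)^{2}$), so iterated substitutions $x=y$ and $y=z$, together with the $x\leftrightarrow y$ symmetry of $Q^{0}_{xy}Q^{0}_{yx}-T_{xy}T_{yx}$, force each side to equal $C\cdot(x-y)^{2}(y-z)^{2}$ for some constant $C$ that is independent of $x$, $y$, and $z$.

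Substituting $Q^{0}_{xy}=T_{xy}-(x-y)R^{0}_{xy}$ from $T=Q_{0}+(x-y)R_{0}$, expanding, and using the identity $T_{xy}R^{0}_{yx}-T_{yx}R^{0}_{xy}=(b-c)(x-y)R^{0}_{xy}-T_{xy}(x-y)\partial R^{0}$ to absorb one $(x-y)$ factor, one divides through by $(x-y)^{2}$ to arrive at the polynomial equation
\begin{equation*}
T_{xy}\partial R^{0}=(b-c)R^{0}_{xy}-R^{0}_{xy}R^{0}_{yx}-C,
\end{equation*}
with the analogous equation for $\widetilde{R}^{0}$ in the variables $y$, $z$; note that $\partial R^{0}=\partial R_{+}$ since $\partial P_{+}$ is symmetric. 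The main step is now a degree count in $(x,y)$: if $R^{0}$ had a nonzero homogeneous component $R^{0,(r)}$ of total degree $r\geq 2$, then the left-hand side would have total degree at most $2+(r-1)=r+1$, while the right-hand side would contain the nonzero degree-$2r$ term $-R^{0,(r)}_{xy}R^{0,(r)}_{yx}$ coming from $-R^{0}_{xy}R^{0}_{yx}$ (the polynomial ring being an integral domain). Since no other term on either side reaches degree $2r$, this yields a contradiction, and hence $R^{0}(x,y)=\alpha x+\gamma y+\beta$ for constants $\alpha$, $\gamma$, $\beta$.

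To conclude, the unique decomposition $R^{0}=R_{+}+\partial P_{+}$ of Proposition \ref{canform} into a $\partial$-positive summand and a symmetric one is read off directly from
\[\alpha x+\gamma y+\beta=(\alpha-\gamma)x+\bigl[\gamma(x+y)+\beta\bigr],\]
giving $R_{+}=(\alpha-\gamma)x$ (a multiple of $x$) and $\partial P_{+}=\gamma(x+y)+\beta$ of total degree at most $1$. Applying the same argument to $\widetilde{R}^{0}$ (with $\delta$ in place of $\partial$ and $y,z$ in place of $x,y$) yields that $\widetilde{R}_{+}$ is a multiple of $y$ and that $\delta\widetilde{P}_{+}$ has degree at most $1$. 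The main obstacle is the algebraic bookkeeping that produces the single identity for $R^{0}$ from the coefficient of $f$; once that identity is written down, the degree count is short.
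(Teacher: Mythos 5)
Your proof is correct, and up to the key identity it follows the paper's route exactly: you compare the coefficients of $f$, invoke Equation \eqref{relswithT} to pull out and cancel $T_{xz}$, observe that what remains equates a symmetric polynomial in $x$ and $y$ with one in $y$ and $z$ so that both equal a constant, and substitute $Q^{0}_{xy}=T_{xy}-(x-y)R^{0}_{xy}$ to reach $\partial(R^{0}_{xy}T_{yx})+R^{0}_{xy}R^{0}_{yx}=\nu$, which is your displayed identity after one application of Lemma \ref{Leibniz}. Where you genuinely diverge is the endgame. The paper decomposes $R_{0}=R_{+}+\partial P$ at that point, tracks the degrees $m$ of $R_{+}$ and $h$ of the symmetric part separately in Equation \eqref{RanddQ}, and must rule out cancellation among the three degree-$2m$ terms $R_{+}\cdot sR_{+}$, $\partial Q\cdot(R_{+}+sR_{+})$, and $(\partial Q)^{2}$; that is exactly what the technical Lemma \ref{vanprod} on variable degrees is used for. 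You instead keep $R_{0}$ whole and note that, with $r=\deg R_{0}$, the degree-$2r$ part of $R^{0}_{xy}R^{0}_{yx}$ is the product of two non-zero polynomials and hence non-zero, while every other term in the identity has degree at most $r+1<2r$ once $r\geq2$. Since the three terms the paper worries about recombine precisely into $R^{0}_{xy}R^{0}_{yx}=(R_{+}+\partial P)(sR_{+}+\partial P)$, and a non-zero $\partial$-positive polynomial can never cancel a symmetric one, your argument renders Lemma \ref{vanprod} unnecessary for this purpose---a genuine simplification. Two small points of care: you should fix $r$ to be the top degree of $R_{0}$ (not merely the degree of some non-zero homogeneous component), so that the degree-$2r$ part of $R^{0}_{xy}R^{0}_{yx}$ is exactly $R^{0,(r)}_{xy}R^{0,(r)}_{yx}$; and the ``iterated substitution'' step is more cleanly phrased as in the paper, namely that a polynomial symmetric in $x$ and $y$ and free of $z$ which equals one symmetric in $y$ and $z$ and free of $x$ must be a constant. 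Neither affects correctness.
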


\begin{proof}
Consider the coefficients multiplying $f$ in Lemma \ref{expansions}, with $\widetilde{T}=T$ by Lemma \ref{Qneq0}, and recall from the proof of Proposition \ref{sfsigmaf} that the cubic braid relation implies Equation \eqref{relswithT}. As this equation expresses $T_{xy}^{2}T_{yz}(x-z)$ as $T_{xy}T_{xz}[T_{yx}(y-z)+T_{yz}(x-y)]$ and $T_{xy}T_{yz}^{2}(x-z)$ as $T_{xz}T_{yz}[T_{zy}(x-y)+T_{xy}(y-z)]$, we can divide again by the common multiplier $T_{xz}$ (by non-degeneracy), cancel $T_{xy}T_{yz}$ from both sides, and obtain the equality \[[T_{xy}T_{yx}-Q^{0}_{xy}Q^{0}_{yx}]\tfrac{y-z}{x-y}=[T_{yz}T_{zy}-\widetilde{Q}^{0}_{yz}\widetilde{Q}^{0}_{zy}]\tfrac{x-y}{y-z}\] (in fact, since $Q_{0}$ and $\widetilde{Q}_{0}$ are almost equal, the product $\widetilde{Q}^{0}_{yz}\widetilde{Q}^{0}_{zy}$ from above equals $Q^{0}_{yz}Q^{0}_{zy}$ and one can equally well work with $Q$ alone below).

For applying Proposition \ref{Hecke} below, we first recall from Equation \eqref{Tpol} and Remark \ref{degnondeg} that $Q^{0}_{xy}$ can be written as $T_{xy}-(x-y)R^{0}_{xy}$, which means that $Q^{0}_{yx}=T_{yx}+(x-y)R^{0}_{yx}$, and similarly for $\widetilde{Q}^{0}_{yz}$ and $\widetilde{Q}^{0}_{zy}$. The left hand side thus becomes $(x-y)(y-z)$ times $R^{0}_{xy}R^{0}_{yx}+(R^{0}_{xy}T_{yx}-R^{0}_{yx}T_{xy})/(x-y)$, with the second summand being just $\partial(R^{0}_{xy}T_{yx})$, and the right hand side is thus the same multiplier times $\widetilde{R}^{0}_{yz}\widetilde{R}^{0}_{zy}+\delta(\widetilde{R}^{0}_{yz}T_{zy})$. We cancel this multiplier, and are again left with an equality between a polynomial that is symmetric in $x$ and $y$ and independent of $z$ and a polynomial that is symmetric in $y$ and $z$ and independent of $x$. Thus, as in the proof of Proposition \ref{sfsigmaf}, both polynomials are equal to the same constant $\nu$.

We thus concentrate on the equality $\partial(R^{0}_{xy}T_{yx})+R^{0}_{xy}R^{0}_{yx}=\nu$ and its consequences, and the equality involving $\widetilde{R}^{0}_{yz}$ and $T_{zy}$ will yield similar consequences in the same manner. We recall from the proof of Proposition \ref{canform} that if $R=R_{+}$ then $R_{0}=R_{+}+\partial P$, and with $T$ as in Proposition \ref{sfsigmaf}, we can write $T_{yx}$ as the sum of the symmetric expression $axy+bx+by+d$ and $(c-b)x$. Our second summand thus expands as $R^{+}_{xy}R^{+}_{yx}+\partial P_{xy}(R^{+}_{xy}+R^{+}_{yx}+\partial P_{xy})$, Lemma \ref{Leibniz} shows that the symmetric part of $T$ multiplies $\partial R_{+}$, and the constant $c-b$ multiplies the sum of $\partial(xR_{+})$ and $\partial P$ (Lemma \ref{Leibniz} again, with the symmetric $\partial P$ and with $\partial x=1$). Altogether we obtain the equality
\begin{equation}
(axy+bx+by+d)\partial R_{+}+R_{+} \cdot sR_{+}+(c-b)\partial(Q+xR_{+})+\partial Q\cdot[(R_{+}+sR_{+})+\partial Q]=\nu. \label{RanddQ}
\end{equation}

Denote the degree of $R_{+}$ by $m$, and recall that if $R_{+}\neq0$ then $m\geq1$ since $R_{+}$ is $\partial$-positive. The $\partial$-positivity implies that $R_{+}+sR_{+}$ is also of degree $m$ and the degree of $\partial R_{+}$ is $m-1$, and write $h$ for the degree of $\partial Q$. Then the degrees of the terms on the left hand side of Equation \eqref{RanddQ} are $m+1$, $2m$, $h$, $m$, $m+h$ (for $\partial Q\cdot(R_{+}+sR_{+})$ considered as a single term), and $2h$ respectively. As the total combination equals the constant $\nu$ by that equation, it follows that $h$ cannot be larger than $m$ (since then the highest degree part of the term $(\partial Q)^{2}$, of degree $2h$, cannot cancel), and if $m>1$, so that $2m>m+1$, the degree $h$ cannot be smaller than $m$ either (by applying the same consideration to the highest degree part of $R_{+} \cdot sR_{+}$, of degree $2m$).

The last paragraph assumed that $R_{+}\neq0$. But when $R_{+}=0$ Equation \eqref{RanddQ} only involves the terms $(c-b)\partial Q$, of degree $h$ when non-vanishing, and $(\partial Q)^{2}$, of degree $2h$, on the left hand side. Thus similar considerations show that if $R_{+}=0$ then $h\leq0$, meaning that $\partial Q$ is a constant in this case.

It follows that if $m>1$ then we must have $h=m$, and we set $B$ to be the part of $R_{+}+sR_{+}$ that is homogenous of maximal degree $m$. If $r$ be the variable degree of $R_{+}+sR_{+}$, then the fact that $R_{+}$, hence also its $m$-homogenous part, is $\partial$-positive, implies that the degree of this part of $R_{+}$ in the variable $x$ is also $r$, while that of the $m$-homogenous part $sR_{+}$ is smaller than $\lceil m/2 \rceil$ (for the exponent of $y$ to always be larger). Therefore if $C$ is the part of the symmetric polynomial $R_{+} \cdot sR_{+}$ that is homogenous of degree $2m$ then it is the product of these parts of $R_{+}$ and $sR_{+}$, and its degree in $x$, which is its variable degree, is smaller than $r+\lceil m/2 \rceil$. Take now $A$ to be the part of $\partial Q$ that is homogenous of maximal degree $m$, and then our degree considerations show that the left hand side of Equation \eqref{RanddQ} is $A(A+B)+C$. But this equation implies the vanishing of the latter expression and we have $C\neq0$ by definition, while Lemma \ref{vanprod} shows that there is no choice for $A$ such that this equality holds under these assumptions.

We have thus proved that the degree $m$ of $R_{+}$ is bounded by 1, meaning that $R^{+}_{xy}$ is a multiple of $x$ since it is $\partial$-positive. Moreover, we saw that the degree of $\partial Q$ is bounded by $m$ when $m\geq1$ and by 0 when $R_{+}=0$, so that it is also bounded by 1 in general. From the analogue of Equation \eqref{RanddQ} with $\widetilde{R}_{+}=\widetilde{R}^{+}_{yz}$, $\widetilde{Q}=\widetilde{Q}_{yz}$, and $\delta\widetilde{Q}=\delta\widetilde{Q}_{yz}$, we obtain that the former is a multiple of $y$ and the degree of the latter is also bounded by 1. This proves the proposition.
\end{proof}

\medskip

Propositions \ref{sfsigmaf} and \ref{lastcoeff} put restrictions on the polynomials $T$, $R$, and $\partial Q_{xy}$ that arise from the fact that the cubic braid relation in question implies the validity of Equations \eqref{relswithT} and \eqref{RanddQ}. But we can extract more data out of these equations. We begin with the former one.
\begin{lem}
Let $T$ be as in Proposition \ref{sfsigmaf}. The Equation \eqref{relswithT} holds if and only if $ad=bc$. \label{coeffT}
\end{lem}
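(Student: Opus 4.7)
The plan is to exploit the observation from the proof of Proposition~\ref{sfsigmaf} that the equality between the two extremal expressions in Equation~\eqref{relswithT} amounts to $\partial T_{xy}$ being a constant. For $T(x,y) = axy + bx + cy + d$ we see directly that $T_{xy} - T_{yx} = (b-c)(x-y)$, so $\partial T_{xy} = b - c$ is constant and that half of Equation~\eqref{relswithT} holds automatically. It therefore suffices to analyze the remaining equality between the middle expression and the first extreme, namely
\begin{equation*}
T_{xy} T_{yz} (x-z) = T_{xz}\bigl[T_{yx}(y-z) + T_{yz}(x-y)\bigr].
\end{equation*}

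For the necessity of $ad = bc$ I would specialize to $y = 0$. The evaluations are $T_{xy} = bx+d$, $T_{yz} = cz+d$, $T_{yx} = cx+d$, and $T_{xz} = axz+bx+cz+d$, while the bracket on the right-hand side collapses to $(cx+d)(-z) + (cz+d)x = d(x-z)$. Cancelling the common factor $(x-z)$ reduces the identity to the bivariate polynomial equation $(bx+d)(cz+d) = d(axz+bx+cz+d)$, whose expansion gives $bc \cdot xz = ad \cdot xz$ and therefore forces $ad = bc$.

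For sufficiency I would use the following factorization observation: $ad = bc$ holds if and only if $T(x,y)$ splits as a product $\alpha(x)\beta(y)$ of univariate polynomials of degree at most $1$. When $a \neq 0$ one can take $\alpha(x) = ax + c$ and $\beta(y) = y + b/a$, so that $\alpha(x)\beta(y) = axy + bx + cy + bc/a = T(x,y)$ precisely when $d = bc/a$; when $a = 0$ the condition $bc = 0$ yields the same kind of factorization with one factor constant. Substituting $T_{uv} = \alpha(u)\beta(v)$ into the displayed identity makes the common factor $\alpha(x)\alpha(y)\beta(z)$ pull out on both sides, and the remaining content reduces to
\begin{equation*}
\beta(y)(x-z) = \beta(x)(y-z) + \beta(z)(x-y),
\end{equation*}
which is just the statement that Lagrange interpolation at three distinct nodes is exact on polynomials of degree at most $1$, and is verified by direct expansion using $\beta(t) = \gamma t + \varepsilon$.

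I do not anticipate a serious obstacle; the only bookkeeping point is to ensure that the case $a = 0$ is handled uniformly inside the factorization argument, which is achieved by splitting once into the subcases $a \neq 0$ and $a = 0$.
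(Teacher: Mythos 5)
Your proof is correct, but it takes a different route from the paper's. The paper gives a single unified computation: it verifies that both bracketed expressions in Equation~\eqref{relswithT} reduce to $(ay^{2}+by+cy+d)(x-z)$, cancels the factor $x-z$, and then computes the difference $T_{xy}T_{yz}-(ay^{2}+by+cy+d)T_{xz}$ explicitly to be $(ad-bc)(x-y)(y-z)$, so that both implications drop out of one identity. You instead split the two directions: for necessity you specialize $y=0$ and read off $bc\,xz=ad\,xz$, and for sufficiency you use the factorization $T(x,y)=\alpha(x)\beta(y)$ into univariate factors of degree at most one (which is indeed equivalent to $ad=bc$, a fact the paper also exploits later when discussing reducibility of $T$ and $Q_{0}$), reducing the identity to the degree-one Lagrange interpolation relation $\beta(y)(x-z)=\beta(x)(y-z)+\beta(z)(x-y)$. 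Your preliminary observation that the equality of the two extremal expressions in Equation~\eqref{relswithT} is automatic for $T$ of the given form (since $\partial T_{xy}=\delta T_{yz}=b-c$) is also correct and lets you treat only one of the two equalities. What your approach buys is conceptual transparency --- it explains \emph{why} $ad=bc$ is the right condition, namely that it is exactly the splitting condition for $T$, and the remaining identity is a universal interpolation fact; what the paper's approach buys is brevity and a closed-form expression for the defect, $(ad-bc)(x-y)(y-z)$, which quantifies the failure of the relation when $ad\neq bc$. The case split at $a=0$ in your sufficiency argument is handled adequately.
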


\begin{proof}
It is straightforward to verify, with the given expression for $T$, that both terms in brackets in Equation \eqref{relswithT} reduce to $(ay^{2}+by+cy+d)(x-z)$. We can thus cancel the multiplier $x-z$, so that this equation is equivalent to the equality $T_{xy}T_{yz}-(ay^{2}+by+cy+d)T_{xz}=0$. But by substituting all the $T$-polynomials, this difference is just $(ad-bc)(x-y)(y-z)$, and the vanishing of this polynomial is equivalent to the vanishing of the asserted expression. This proves the lemma.
\end{proof}

The second equation produces the following extra equality.
\begin{lem}
Take some constants $a$, $b$, $c$, and $d$, set $R_{+}(x,y)$ to be $\alpha x$ for another constant $\alpha$, let $\beta$ and $e$ be two additional constants, and let $Q$ be a polynomial in two variables such that $\partial Q_{xy}=\beta(x+y)+e$. Then Equation \eqref{RanddQ} holds if and only if either $\alpha=\beta=0$, or $\alpha=-a$, $\beta=0$, and $e=-c$, or $\alpha=-a$, $\beta=a$, and $e=b$. \label{restonQR}
\end{lem}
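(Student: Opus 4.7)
The plan is to substitute the explicit forms of $R_{+}$ and $\partial Q$ directly into Equation \eqref{RanddQ}, expand the left-hand side as a polynomial in $x$ and $y$, and demand that each non-constant monomial has vanishing coefficient; the constant term is then simply absorbed into $\nu$. Setting $R_{+}(x,y)=\alpha x$, I would record first the auxiliary identities $sR_{+}=\alpha y$, $\partial R_{+}=\alpha$, $R_{+}\cdot sR_{+}=\alpha^{2}xy$, $R_{+}+sR_{+}=\alpha(x+y)$, and $\partial(xR_{+})=\alpha(x+y)$; combined with $\partial Q_{xy}=\beta(x+y)+e$ this gives $\partial(Q+xR_{+})=(\alpha+\beta)(x+y)+e$.

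Plugging these into the left-hand side of Equation \eqref{RanddQ}, the result is visibly symmetric in $x$ and $y$, and the only monomials that can appear are $1$, $x+y$, $xy$, and $x^{2}+y^{2}$. Reading off the coefficients yields three necessary equations. From $x^{2}$ (equivalently $y^{2}$), which arises only from the square of $\partial Q$, one obtains $\beta(\alpha+\beta)=0$; from $xy$ the coefficient is $a\alpha+\alpha^{2}+2\beta(\alpha+\beta)$, which upon invoking the previous identity collapses to $\alpha(\alpha+a)=0$; and from $x+y$ one reads $\alpha b+(c-b)(\alpha+\beta)+e(\alpha+2\beta)=0$. The constant term equals $\alpha d+(c-b)e+e^{2}$ and merely determines $\nu$, imposing no further constraint.

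A short case split then produces exactly the three listed alternatives. If $\beta=0$, the first two equations reduce to $\alpha(\alpha+a)=0$: the choice $\alpha=0$ satisfies the linear equation trivially and gives $\alpha=\beta=0$; the choice $\alpha=-a$ turns the linear equation into $-a(c+e)=0$, which forces $e=-c$ when $a\neq0$ and otherwise collapses back to the previous case. If $\beta\neq0$, then $\alpha=-\beta\neq0$, and $\alpha(\alpha+a)=0$ forces $\alpha=-a$, whence $\beta=a$; the linear equation then reads $a(e-b)=0$, and since $a=\beta\neq0$ we conclude $e=b$. Conversely, each of the three solutions is readily checked to satisfy all four coefficient conditions, giving the ``if'' direction as well. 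The only mild subtlety worth flagging is that the $x$- and $y$-coefficients coincide automatically by symmetry, so a single linear condition suffices; beyond this, the argument is pure bookkeeping and presents no real obstacle.
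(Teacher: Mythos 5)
Your proof is correct and is essentially the paper's own argument: substitute the explicit forms, expand the left-hand side into quadratic, linear, and constant parts, require the non-constant coefficients to vanish, and then run the same case analysis (the paper splits on $\alpha$ first and you on $\beta$ first, but the three alternatives come out identically, with the constant term $\alpha d+e(e+c-b)$ merely fixing $\nu$ in both treatments). One tiny slip in the prose: the $x^{2}$-coefficient $\beta(\alpha+\beta)$ arises from the whole term $\partial Q\cdot[(R_{+}+sR_{+})+\partial Q]$, not only from $(\partial Q)^{2}$, though the coefficient you actually record is correct.
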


\begin{proof}
With our $R_{+}$ the expressions $\partial R_{+}$, $R_{+} \cdot sR_{+}$, $\partial(xR_{+})$, and $R_{+}+sR_{+}$ are $\alpha$, $\alpha^{2}xy$, $\alpha(x+y)$, and $\alpha(x+y)$ respectively. Substituting these, as well as the value of $\partial Q$, into Equation \eqref{RanddQ} produces \[\alpha(axy+bx+by+d)+\alpha^{2}xy+[\beta(x+y)+e+c-b][(\alpha+\beta)(x+y)+e]=\nu.\] The left hand side is the sum of the quadratic terms $\beta(\alpha+\beta)(x+y)^{2}$ and $\alpha(a+\alpha)xy$, the linear term $[b\alpha+(\alpha+\beta)(c-b)+e(\alpha+2\beta)](x+y)$, and the constant term $d\alpha+e(e+c-b)$. Thus the equality holds if and only if the first three terms, namely their coefficients, vanish.

Now, the vanishing of $\alpha(a+\alpha)$ implies that either $\alpha=0$ or $\alpha=-a$. When $\alpha=0$, the vanishing of $\beta(\alpha+\beta)$ implies that $\beta=0$ as well, and then the linear term already vanishes regardless of the value of $e$. Otherwise $\alpha=-a$, and from $\beta(\alpha+\beta)$ we get either $\beta=0$ or $\beta=a$. When $\beta=0$, the remaining expression to vanish is $-a(c+e)$, which implies that either $a=0$ and we are in the previous case again, or $e=-c$. The remaining case is where $\beta=a$, yielding the vanishing of $a(e-b)$, hence either $a=0$ which sends us to the previous case yet again, or we have $e=b$. This proves the lemma.
\end{proof}

\begin{rmk}
Note that the constant $\nu$ equals, by the proof of Lemma \ref{restonQR}, to the constant $d\alpha+e(e+c-b)$. In the first case there it is just $e(e+c-b)$, while in the other two cases it becomes $-ad+bc$, which vanishes when Lemma \ref{coeffT} holds. The fact that if $\alpha=0$ then $\beta=0$ in Lemma \ref{restonQR} is in correspondence with the fact that when $R_{+}$ vanished in the proof of Proposition \ref{lastcoeff}, the degree of $\partial Q$ was bounded by 0 rather than 1. Of course, Lemma \ref{restonQR} holds equally well for $\widetilde{R}_{+}(y,z)=\widetilde{\alpha}y$ and $\delta\widetilde{Q}(y,z)=\widetilde{\beta}(y+z)+\widetilde{e}$. \label{valofnu}
\end{rmk}

\medskip

We can now prove our main theorem.
\begin{thm}
Let $\{\pi_{i}\}_{i=1}^{n-1}$ be a family of polynomial divided difference operators such that for any $1 \leq i \leq n-1$, neither the polynomial $Q_{0}$ from Proposition \ref{canform} nor the polynomial $T$ from Equation \eqref{Tpol} vanishes. Then the cubic braid relations hold between these operators if and only if the family is as in one of the following two cases.
\begin{enumerate}[(1)]
\item There exist constants $a$, $b$, $c$, $d$, not all 0 and such that $ad-bc=0$, and another constant $e$ that equals neither 0 nor $b-c$, such that the polynomials Proposition \ref{canform} are $P^{+}(x_{i},x_{i+1})=ax_{i}x_{i+1}+(b-e)x_{i}+cx_{i+1}+d$, $Q_{+}(x_{i},x_{i+1})=ex_{i}$, and $R_{+}(x_{i},x_{i+1})=0$, uniformly for all $1 \leq i \leq n-1$.
\item There are $a$, $b$, $c$, and $d$ with the same properties, such that for every $i$ independently the polynomials $P^{+}(x_{i},x_{i+1})$, $Q_{+}(x_{i},x_{i+1})$, and $R_{+}(x_{i},x_{i+1})$ are given by \[\begin{matrix} ax_{i}x_{i+1}+bx_{i}+cx_{i+1}+d, & 0, & 0; \\ ax_{i}x_{i+1}+cx_{i}+cx_{i+1}+d, & (b-c)x_{i}, & 0; \\ ax_{i}^{2}+(b+c)x_{i}+cx_{i+1}+d, & -cx_{i}, & -ax_{i}; \\ cx_{i+1}+d, & ax_{i}^{2}+bx_{i}, & -ax_{i}. \end{matrix}\]
\end{enumerate} \label{main}
\end{thm}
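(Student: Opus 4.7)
The approach is to chain the earlier preparatory results and then carry out a careful almost-equality analysis. Suppose $\{\pi_i\}$ is a family of non-degenerate polynomial divided difference operators satisfying the cubic braid relations. By Lemma \ref{Qneq0} applied to each consecutive pair, there is a single polynomial $T$ common to all $\pi_i$ and all $Q_0^{(i)}$ are pairwise almost equal. Proposition \ref{sfsigmaf} combined with Lemma \ref{coeffT} then forces $T(u,v) = auv + bu + cv + d$ with $ad = bc$; non-degeneracy $T \neq 0$ gives $(a,b,c,d) \neq 0$. By Proposition \ref{lastcoeff}, for every $i$ the polynomial $R_+^{(i)}$ is a multiple $\alpha_i x_i$ and $\partial P_+^{(i)}$ is affine; Lemma \ref{restonQR} applied individually to each operator then restricts it to one of three admissible families, whose corresponding data for $(R_+, \partial P_+)$ are $(0, e)$ with $e$ a free parameter (family (A)), $(-ax, -c)$ (family (B)), or $(-ax, a(x+y)+b)$ (family (C)).

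The next step is to translate each family into canonical form 3 of Proposition \ref{canform}, using $R_0 = R_+ + \partial P_+$, $Q_0 = T - (x-y)R_0$, then $Q^+ = Q_0 - sP_+$ followed by the $\partial$-positive / symmetric decomposition of $Q^+$ to recover $Q_+$ and $P^+$. A direct computation yields: family (A) with parameter $e$, via the reparametrization $e' := b - c - e$, produces the triple $P^+ = axy + (b-e')x + cy + d$, $Q_+ = e'x$, $R_+ = 0$ with $Q_0 = auv + (c+e')u + (b-e')v + d$; families (B) and (C) produce the rows (iv) and (iii) of the theorem's table, with the univariate $Q_0$ polynomials $au^2 + (b+c)u + d$ and $av^2 + (b+c)v + d$ respectively.

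The decisive structural step is to impose pairwise almost-equality of the $Q_0^{(i)}$. The key algebraic observation is the factorization $f(t) := at^2 + (b+c)t + d = (at+b)(at+c)/a$, which is forced by $ad = bc$. With this, the $Q_0$ polynomials of families (B), (C), and the special cases $e' \in \{0, b-c\}$ of family (A) all factor as $(au + r_1)(av + r_2)/a$ for various choices of $r_1, r_2 \in \{b,c\}$; taking $\hat{Q} = 1$ with appropriate univariate distributions, these four options are pairwise almost equal with common product-property value $q_l(t)q_r(t) = f(t)$. Conversely, when $e' \notin \{0, b-c\}$ the polynomial $Q_0^{(\mathrm{A})}$ is irreducible as a bivariate polynomial, so almost-equality with any neighbor forces that neighbor to be also in (A) with the same $e'$. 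Propagating along $i$, one obtains the dichotomy: either all $\pi_i$ are in (A) with a common $e' \notin \{0, b-c\}$ (case (1) of the theorem), or each $\pi_i$ independently chooses one of the four compatible options listed in case (2).

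For sufficiency, the reverse direction is direct: in either listed case Equation \eqref{relswithT} holds by Lemma \ref{coeffT}, Equation \eqref{RanddQ} holds operator by operator by Lemma \ref{restonQR}, and the $Q_0$'s are pairwise almost equal by the factorization above; together with the remark following Lemma \ref{Qneq0}, all six coefficients in Lemma \ref{expansions} agree and the cubic braid relation follows. The main obstacle in the plan is the almost-equality analysis of the third paragraph: one must simultaneously verify that the common factorization $f(t) = (at+b)(at+c)/a$ genuinely ties together the four candidate forms of case (2) into a consistent mixable family, and that the irreducibility of the generic case-(1) $Q_0$ rules out any mixing, propagating across the full chain $i = 1, \dots, n-1$.
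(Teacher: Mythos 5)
Your proposal is correct and follows essentially the same route as the paper's own proof: it reduces the cubic relation to the coefficient comparisons of Lemma \ref{expansions}, chains Lemma \ref{Qneq0}, Proposition \ref{sfsigmaf} with Lemma \ref{coeffT}, and Proposition \ref{lastcoeff} with Lemma \ref{restonQR} to pin down $T$, $R_{+}$, and the quadratic coefficient data, and then uses irreducibility versus factorization of the resulting $Q_{0}$-polynomials under almost equality to obtain the dichotomy between Cases (1) and (2). The only (harmless) discrepancies are that your families (B) and (C) actually correspond to rows (iii) and (iv) respectively rather than the other way around, and that the sufficiency direction tacitly needs the constant $\nu$ from Equation \eqref{RanddQ} to agree for consecutive operators --- which does hold, since it equals $e(e+c-b)$ throughout Case (1) and $0$ for every option in Case (2).
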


\begin{proof}
The asserted relation holds if and only if the two expressions from Lemma \ref{expansions} are equal. We saw that in the non-degenerate case, the conditions from Lemma \ref{Qneq0} are equivalent to the coefficients of $\sigma sf$, $s\sigma f$, and $s\sigma sf=\sigma s\sigma f$ in both of them being the same. Moreover, under these conditions, the proof of Proposition \ref{sfsigmaf} shows that the coefficients of $sf$ and $\sigma f$ coincide if and only if Equation \eqref{relswithT} holds. From Lemma \ref{coeffT} we deduce that this is the case precisely when the polynomial $T(x,y)$ from Equation \eqref{Tpol} equals $axy+bx+cy+d$ for constants $a$, $b$, $c$, and $d$ that satisfy $ad=bc$. In addition, under these assumptions, it follows from the proof of Proposition \ref{lastcoeff} that the coefficients of $f$ on both sides are also equal precisely when Equation \eqref{RanddQ} is satisfied, both for $Q$ and $R_{+}$ and for $\widetilde{Q}$ and $\widetilde{R}_{+}$.

Next, assume that $Q=Q_{+}$ is $\partial$-positive and $\widetilde{Q}=\widetilde{Q}_{+}$ is $\delta$-positive, and then the proposition itself, together with the uniqueness in Lemma \ref{symimdi}, imply that $R_{+}(x,y)=\alpha x$, $Q_{+}(x,y)=\beta x^{2}+ex$, $\widetilde{R}_{+}(y,z)=\widetilde{\alpha}y$, and $\widetilde{Q}_{+}(y,z)=\widetilde{\beta}y^{2}+\widetilde{e}y$ for constants $\alpha$, $\beta$, $e$, $\widetilde{\alpha}$, $\widetilde{\beta}$, $\widetilde{e}$. Recalling from Equation \eqref{Tpol} and Proposition \ref{canform} (with $S=0$) that $P^{+}(x,y)=T(x,y)-Q_{+}(x,y)-(x-y)R_{+}(x,y)$ and $Q_{0}(x,y)=P^{+}(y,x)+Q_{+}(x,y)$, we deduce that \[Q_{0}(x,y)=\beta x^{2}+(a+\alpha)xy-(\alpha+\beta)y^{2}+(c+e)x+(b-e)y+d,\] and similarly for the polynomial that is associated with $\varpi$ we get \[\widetilde{Q}_{0}(y,z)=\widetilde{\beta}y^{2}+(a+\widetilde{\alpha})yz-(\widetilde{\alpha}+\widetilde{\beta})y^{2}+(c+\widetilde{e})y+(b-\widetilde{e})z+d.\] We also recall from Lemma \ref{Qneq0} that $Q_{0}$ and $\widetilde{Q}_{0}$ are almost equal, in the sense of Definition \ref{almosteq}.

Now, Lemma \ref{restonQR} determines the cases of values of the constants that satisfy Equation \eqref{RanddQ}, and we consider first the situation with $\alpha=\beta=0$. Then the expression for $Q_{0}$ is either univariate or reducible if and only if $ad=(b-e)(c+e)$, which is equivalent to $e(e+c-b)=0$ since $ad=bc$ by assumption. Thus for $e$ that equals neither 0 nor $b-c$, the polynomial $Q_{0}$ is pure and irreducible, so that is equals $\widetilde{Q}_{0}$ as well and hence $\widetilde{\alpha}=\widetilde{\beta}=0$ and $\widetilde{e}=e$. Clearly, when $\widetilde{\alpha}=\widetilde{\beta}=0$ and $\widetilde{e}$ is neither 0 nor $b-c$ we get a pure irreducible $\widetilde{Q}_{0}$ and thus $Q_{0}=\widetilde{Q}_{0}$, $\alpha=\beta=0$, and $e=\widetilde{e}$. As this is the case when $\pi=\pi_{i}$ and $\varpi=\pi_{i+1}$ for any $1 \leq i \leq n-2$, we deduce that the polynomials are independent of $i$, and this produces, by using the original variables, the first asserted case.

It remains to consider the cases with $e=0$ and with $e=b-c$, as well as the two other possibilities from Lemma \ref{restonQR}. This gives the values $axy+bx+cy+d$, $axy+cx+by+d$, $ay^{2}+(b+c)y+d$, and $ax^{2}+(b+c)x+d$ for $Q_{0}(x,y)$, and since $ad=bc$, they are all almost equal (either they are all the same linear polynomial in either $x$ or $y$ when $a=0$, or they are the product of two non-constant linear polynomials, each of which can be in either $x$ or $y$). Similarly, the resulting values of $\widetilde{Q}_{0}$ are the same (in the appropriate variables). Bt returning to the original variables again, this yields the second asserted case, and since all of our arguments are invertible, we also proved that both cases do construct families of non-degenerate polynomial divided difference operators that satisfy the cubic braid relations. This proves the theorem.
\end{proof}

\begin{rmk}
Theorem \ref{main} expresses the families of polynomial divided difference operators using the polynomials $P_{+}$, $Q^{+}$, and $R_{+}$ from Proposition \ref{canform}. Here we record the formulae for the polynomials in the other normalizations from Proposition \ref{canform}, in the two cases from that theorem.
\begin{enumerate}[(1)]
\item $P_{+}(x_{i},x_{i+1})=(b-c-e)x_{i}$ and $Q^{+}(x_{i},x_{i+1})=ax_{i}x_{i+1}+(c+e)x_{i}+cx_{i+1}+d$, and with them $Q_{0}(x_{i},x_{i+1})=ax_{i}x_{i+1}+(c+e)x_{i}+(b-e)x_{i+1}+d$ and $R_{0}(x_{i},x_{i+1})=b-c-e$, uniformly for all $1 \leq i \leq n-1$, where $e$ equals neither 0 nor $b-c$.
\item For each $i$ separately, the values of $P_{+}(x_{i},x_{i+1})$, $Q^{+}(x_{i},x_{i+1})$, $Q_{0}(x_{i},x_{i+1})$, and $R_{0}(x_{i},x_{i+1})$ are given by
\[\begin{matrix} (b-c)x_{i}, & ax_{i}x_{i+1}+cx_{i}+cx_{i+1}+d, & ax_{i}x_{i+1}+cx_{i}+bx_{i+1}+d, & b-c; \\ 0, & ax_{i}x_{i+1}+bx_{i}+cx_{i+1}+d, & ax_{i}x_{i+1}+bx_{i}+cx_{i+1}+d, & 0; \\ ax_{i}^{2}+bx_{i}, & cx_{i+1}+d, & ax_{i+1}^{2}+(b+c)x_{i+1}+d, & ax_{i+1}+b; \\ -cx_{i}, & ax_{i}^{2}+(b+c)x_{i}+cx_{i+1}+d, & ax_{i}^{2}+(b+c)x_{i}+d, & -ax_{i}-c. \end{matrix}\]
\end{enumerate}
These are all obtained, as in the proof of Proposition \ref{canform}, by decomposing $P^{+}$ as $P_{+}+P_{s}$ with the former $\partial$-positive and the latter symmetric, and using the relations $Q^{+}=Q_{+}+P^{+}-P_{+}$, $Q_{0}=sP^{+}+Q_{+}=sP_{+}+Q^{+}$ (since $S=0$), and $R_{0}=R_{+}+\partial P_{+}=R_{+}+\partial P^{+}$. In fact, the value of $Q_{0}$ was already determined in the proof of Theorem \ref{main}, and that of $R_{0}$ is also obtained, via Equation \eqref{Tpol} as $\frac{T(x_{i},x_{i+1})-Q_{0}(x_{i},x_{i+1})}{x_{i}-x_{i+1}}$, as we recall from Proposition \ref{sfsigmaf} that $T(x_{i},x_{i+1})=ax_{i}x_{i+1}+bx_{i}+cx_{i+1}+d$. \label{otherforms}
\end{rmk}

\begin{rmk}
We have a symmetry in the polynomials from Theorem \ref{main} and Remark \ref{otherforms}, of the following kind. Taking $e$ to $b-c-e$ interchanges the $P_{+}$ with $Q_{+}$ polynomials sa well as the $P^{+}$ with $Q^{+}$ polynomials, and operates as $s=s_{i}$ on $Q_{0}$. The special case of the values 0 and $b-c$ is revealed in such a symmetry involving an interchange of the first two lines of Case (2) in them. This symmetry is completed by noticing that the same occurs when the last two lines in Case (2) are interchanged in a similar manner. We also note that in some situations, some of the four options in Case (2) coincide: When $b=c$ the first two are the same, and recall that $a=0$ then either $b=0$ or $c=0$ since $ad=bc$. Then for $a=c=0$ the third line there equals the first and the fourth one is like the second, while if $a=b=0$ then the third one coincides with the second and the fourth one is the same as the first. Finally, if $a=b=c=0$ then Case (2) contains just one option, which produces the family in which $\pi_{i}=d\partial_{i}$ for every $1 \leq i \leq n-1$, with some non-zero constant $d$. \label{lesselts}
\end{rmk}

\section{The Second Degenerate Type \label{WithVanQ0}}

To complete the analysis, it remains to consider the polynomial divided difference operators involving a vanishing $Q_{0}$. We stick again to the notation involving $x$, $y$, $z$, $s$, $\sigma$, $\partial$, $\delta$, $\pi$, $\varpi$, etc., and the first step now is the following one.
\begin{lem}
Assume that $\pi$ and $\varpi$, with the polynomials $Q_{0}$, $R_{0}$, $\widetilde{Q}_{0}$ and $\widetilde{R}_{0}$ from Proposition \ref{canform}, satisfy the cubic braid relation. Then if $\widetilde{Q}_{0}=0$ then $\widetilde{R}^{0}_{yz}$ is $\widetilde{r}(y)$ for a non-zero univariate polynomial $r$, and $\widetilde{T}_{yz}=(y-z)\widetilde{r}(y)$. Similarly, when $Q_{0}=0$ we get $T_{xy}=(x-y)R^{0}_{xy}$ and $R^{0}_{xy}=r(y)$, where $r$ is a non-zero univariate polynomial. \label{Q0univ}
\end{lem}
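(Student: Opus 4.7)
The plan is to establish that $\widetilde{R}^0_{yz}$ depends only on $y$ (the identity $\widetilde{T}_{yz}=(y-z)\widetilde{r}(y)$ being immediate once this is known); the ``similarly'' half for $R^0_{xy}$ when $Q_0=0$ then follows by the symmetric argument obtained from exchanging the roles of $\pi$ and $\varpi$ and of $x$ and $z$. First, with $\widetilde{P}=\widetilde{S}=0$ as in the first canonical form of Proposition \ref{canform}, the vanishing of $\widetilde{Q}_0$ combined with Equation \eqref{Tpol} gives $\widetilde{T}_{yz}=(y-z)\widetilde{R}^0_{yz}$ for free, and the formula $\varpi g=(\widetilde{T}_{yz}g-\widetilde{Q}^0_{yz}\sigma g)/(y-z)$ from the proof of Corollary \ref{Tcan} collapses to plain multiplication $\varpi g=\widetilde{R}^0_{yz}g$ by a non-zero polynomial. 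Everything thus reduces to ruling out $z$-dependence in $\widetilde{R}^0_{yz}$.

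I would split on whether $Q_0$ also vanishes. If $Q_0=0$, then $\pi$ is likewise multiplication by the non-zero polynomial $R^0_{xy}$, and because polynomial multiplication commutes the cubic braid relation $\pi\varpi\pi f=\varpi\pi\varpi f$ reduces to the scalar identity $(R^0_{xy})^2\widetilde{R}^0_{yz}=R^0_{xy}(\widetilde{R}^0_{yz})^2$ in the integral domain of polynomials. Cancelling the non-vanishing factor $R^0_{xy}\widetilde{R}^0_{yz}$ leaves $R^0_{xy}=\widetilde{R}^0_{yz}$, and since the left hand side depends only on $x$ and $y$ while the right hand side depends only on $y$ and $z$, the common value is a polynomial in $y$ alone, proving both halves of the lemma simultaneously in this sub-case.

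If $Q_0\neq 0$, I substitute $\widetilde{Q}^0\equiv 0$ into the two expansions in Lemma \ref{expansions}. The coefficients of $\sigma f$, $s\sigma f$, $\sigma sf$ and $s\sigma sf$ all drop out automatically, and matching the coefficients of $sf$ on both sides, after clearing the common denominator and the non-vanishing factor $Q^0_{xy}$, yields
\[
T_{xy}\widetilde{T}_{yz}(x-z)-T_{yx}\widetilde{T}_{xz}(y-z)=(x-y)\widetilde{T}_{yz}\widetilde{T}_{xz}.
\]
Substituting $\widetilde{T}_{yz}=(y-z)\widetilde{R}^0_{yz}$ and $\widetilde{T}_{xz}=(x-z)\widetilde{R}^0_{xz}$ and dividing through by $(x-z)(y-z)$ produces the key identity
\[
T_{xy}\widetilde{R}^0_{yz}-T_{yx}\widetilde{R}^0_{xz}=(x-y)\widetilde{R}^0_{yz}\widetilde{R}^0_{xz}.
\]

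A degree count in $z$ now closes the argument. Let $d$ be the degree of $\widetilde{R}_0(y,z)$ in its second variable and let $A(y)$ be its leading $z$-coefficient, which is a non-zero polynomial in $y$ by the definition of $d$; then $\widetilde{R}^0_{xz}=\widetilde{R}_0(x,z)$ has degree $d$ in $z$ with leading coefficient $A(x)$. The left hand side of the key identity has degree at most $d$ in $z$ because $T_{xy}$ and $T_{yx}$ are $z$-free, whereas the right hand side has degree exactly $2d$ with leading $z$-coefficient $(x-y)A(x)A(y)$, which remains non-zero in the polynomial ring by the integral domain property. Assuming $d\geq 1$ therefore produces a contradiction, forcing $d=0$ and hence $\widetilde{R}^0_{yz}=\widetilde{r}(y)$; the non-vanishing of $\widetilde{r}$ is inherited from that of $\widetilde{R}^0_{yz}$, and the formula for $\widetilde{T}_{yz}$ follows from Equation \eqref{Tpol}. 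The main technical point is verifying that the top $z$-term on the right hand side really is $(x-y)A(x)A(y)z^{2d}$ and is non-zero as a polynomial in the remaining variables; once this is in place everything else is bookkeeping, and the mirror degree count in $x$ (applied to the $\sigma f$-coefficient identity of Lemma \ref{expansions} when $Q_0=0$ and $\widetilde{Q}_0\neq 0$) delivers the symmetric second half of the lemma.
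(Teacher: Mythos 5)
Your proof is correct, and it follows the same overall strategy as the paper --- extract a polynomial identity from the coefficient comparison in Lemma \ref{expansions} and then kill the $z$-dependence of $\widetilde{R}_{0}$ by a leading-term degree count --- but it runs through a different coefficient. The paper compares the coefficients of $f$, which yields the single identity $(\widetilde{R}^{0}_{yz})^{2}T_{xy}(x-y)-\widetilde{R}^{0}_{yz}T_{xy}^{2}+\widetilde{R}^{0}_{xz}Q^{0}_{xy}Q^{0}_{yx}=0$ valid whether or not $Q_{0}$ vanishes; the degree-$2m$ term there is $(\widetilde{R}^{0}_{yz})^{2}T_{xy}(x-y)$, so the paper must first invoke Proposition \ref{degenT} to know $T_{xy}\neq0$ before concluding $m=0$. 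You instead compare the coefficients of $sf$, which requires cancelling $Q^{0}_{xy}$ and hence forces the separate (trivial) sub-case $Q_{0}=0$, where both operators are multiplications and the braid relation collapses to $R^{0}_{xy}=\widetilde{R}^{0}_{yz}$; in return, your key identity $T_{xy}\widetilde{R}^{0}_{yz}-T_{yx}\widetilde{R}^{0}_{xz}=(x-y)\widetilde{R}^{0}_{yz}\widetilde{R}^{0}_{xz}$ has its top $z$-term $(x-y)A(x)A(y)z^{2d}$ independent of $T$, so you never need the non-vanishing of $T_{xy}$ or the appeal to Proposition \ref{degenT}. Both routes are sound and of comparable length; the paper's is uniform in one identity, yours trades a case split for a slightly lighter set of prerequisites.
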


\begin{proof}
When $\widetilde{Q}_{0}=0$, the expression for both $\pi\varpi\pi f$ and $\varpi\pi\varpi f$ in Lemma \ref{expansions} involve only $f$ and $sf$, and we recall that $\widetilde{T}_{yz}=(y-z)\widetilde{R}^{0}_{yz}$. Thus, when comparing the coefficients of $f$ on both sides of that lemma, we get the equality of $\frac{T_{xy}^{2}\widetilde{R}^{0}_{yz}(y-z)(x-z)-\widetilde{R}^{0}_{xz}Q^{0}_{xy}Q^{0}_{yx}(x-z)(y-z)}{x-y}$ and $T_{xy}(\widetilde{R}^{0}_{yz})^{2}(y-z)(x-z)$. Canceling out the multiplier $(x-z)(y-z)$ and multiplying by $x-y$ yields the equality
\begin{equation}
(\widetilde{R}^{0}_{yz})^{2}T_{xy}(x-y)-\widetilde{R}^{0}_{yz}T_{xy}^{2}+\widetilde{R}^{0}_{xz}Q^{0}_{xy}Q^{0}_{yx}=0. \label{beforeR=r}
\end{equation}
Recalling that $\varpi\neq0$, we deduce that $\widetilde{T}\neq0$, which implies, by Proposition \ref{degenT}, that $T_{xy}\neq0$ as well.

Now, the only dependence of Equation \eqref{beforeR=r} on $z$ is through the $\widetilde{R}_{0}$ polynomials, and let $m$ denote the degree of this polynomial in the second variable. Then the degree of the first term in $z$ is $2m$ (since $T_{xy}\neq0$), while those of the other terms is bounded by $m$. This equality can only hold if $m=0$, namely if $\widetilde{R}_{0}(y,z)=\widetilde{r}(y)$ for a univariate polynomial $r$. This proves the first assertion.

The second assertion is similar, with $T_{xy}=(x-y)R^{0}_{xy}$ and $\widetilde{T}_{yz}\neq0$ via Proposition \ref{degenT}, the equality $(R^{0}_{xy})^{2}\widetilde{T}_{yz}(y-z)-R^{0}_{xy}\widetilde{T}_{yz}^{2}+R^{0}_{xz}\widetilde{Q}^{0}_{yz}\widetilde{Q}^{0}_{zy}=0$, and checking the degree of each term in $x$, so that $R^{0}_{xy}=r(y)$ for univariate $r$. This proves the lemma.
\end{proof}

We shall also need the following auxiliary result.
\begin{lem}
Assume that $\widetilde{r}$ is a univariate polynomial, and that $\widehat{T}$ is a polynomial of two variables such that $\widetilde{r}(x)\widehat{T}(x,y)-x$ is symmetric in $x$ and $y$. Then $\widetilde{r}$ has degree at most 1. If the polynomial $\widetilde{r}(x)^{2}\widehat{T}(x,y)^{2}-\widetilde{r}(y)\widehat{T}(x,y)(x-y)$ is also symmetric, then either $\widehat{T}$ is a constant and $r\widehat{T}$ is a monic linear polynomial, or $\widetilde{r}$ is a constant and $\widehat{T}$ is any polynomial such that $\widetilde{r}\widehat{T}(x,y)-x$ is symmetric. These conditions are also sufficient for these symmetry properties. \label{fromsym}
\end{lem}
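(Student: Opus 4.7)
The plan is to reformulate the symmetries as polynomial identities in $\widetilde{r}$ and $\widehat{T}$, bound $\deg\widetilde{r}$ via a root substitution, then extract from the second symmetry an equation that factors cleanly.

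I would begin by rewriting the first hypothesis as the identity
\[
\widetilde{r}(x)\widehat{T}(x,y) - \widetilde{r}(y)\widehat{T}(y,x) = x - y,
\]
so that $U(x,y) := \widetilde{r}(x)\widehat{T}(x,y)$ satisfies $\partial U = 1$. For the degree bound, if $\widetilde{r}$ is not a nonzero constant then it has some root $\alpha$ in an algebraic closure of the base field; substituting $x=\alpha$ into the identity yields $\widetilde{r}(y)\widehat{T}(y,\alpha) = y - \alpha$, and since $\widehat{T}(y,\alpha)$ cannot vanish identically (else the left side would be zero), comparing degrees in $y$ forces $\deg\widetilde{r} \leq 1$.

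For the second assertion, I would take the second symmetric polynomial and subtract its image under $x \leftrightarrow y$. Using the factorization $U(x,y)^2 - U(y,x)^2 = \bigl(U(x,y)-U(y,x)\bigr)\bigl(U(x,y)+U(y,x)\bigr) = (x-y)\bigl(U(x,y)+U(y,x)\bigr)$ together with the first identity, the factor $x-y$ cancels on both sides and one is left with
\[
U(x,y) + U(y,x) = \widetilde{r}(y)\widehat{T}(x,y) + \widetilde{r}(x)\widehat{T}(y,x),
\]
which rearranges to $\bigl[\widetilde{r}(x) - \widetilde{r}(y)\bigr]\bigl[\widehat{T}(x,y) - \widehat{T}(y,x)\bigr] = 0$. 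Since the polynomial ring is an integral domain, either $\widetilde{r}$ is a constant (one case of the statement) or $\widehat{T}$ is symmetric.

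In the latter subcase, $\widetilde{r}$ must be non-constant, hence by the first assertion $\widetilde{r}(x) = rx + s$ with $r \neq 0$; substituting $\widehat{T}(y,x) = \widehat{T}(x,y)$ into the original identity gives $r(x-y)\widehat{T}(x,y) = x-y$, so $\widehat{T} = 1/r$ is a constant and $\widetilde{r}\widehat{T} = x + s/r$ is monic linear, as claimed. Sufficiency is a direct verification in each case: for constant $\widetilde{r}$, writing $U = x + V$ with $V$ symmetric, the second expression equals $U \cdot (U - (x-y)) = (x+V)(y+V)$, which is visibly symmetric; for linear $\widetilde{r}$ and constant $\widehat{T} = 1/r$ with $c = s/r$, the expression rearranges to $(x+c)^{2} + (y+c)^{2} - (x+c)(y+c)$, also symmetric. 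The main obstacle is spotting the factorization leading to $\bigl[\widetilde{r}(x) - \widetilde{r}(y)\bigr]\bigl[\widehat{T}(x,y) - \widehat{T}(y,x)\bigr] = 0$; once that is in hand, the dichotomy and the remaining verifications are routine.
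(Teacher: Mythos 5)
Your proof is correct, and it takes a genuinely different route from the paper's at the decisive step. For the degree bound, the paper shows that $x-y$ lies in the ideal generated by $\widetilde{r}(x)$ and $\widetilde{r}(y)$ and rules out $\deg\widetilde{r}\geq2$ by linear independence of monomials modulo that ideal, whereas you substitute a root of $\widetilde{r}$ from the algebraic closure and compare degrees; both are fine, and yours is slightly more elementary. The real divergence is in the dichotomy: the paper first writes $\widehat{T}(x,y)=(y+b)U(x,y)+\frac{1}{a}$ with $U$ symmetric and then runs a divisibility argument on the second expression (subtracting a suitable multiple of $\widehat{T}(y,x)$ to land on $(x-y)^{2}$) to force $U=0$. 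You instead antisymmetrize the second expression directly, use $U(x,y)-U(y,x)=x-y$ to factor out $x-y$, and arrive at the identity $\bigl[\widetilde{r}(x)-\widetilde{r}(y)\bigr]\bigl[\widehat{T}(x,y)-\widehat{T}(y,x)\bigr]=0$, from which the integral-domain property hands you the two cases at once; the non-constant branch then collapses immediately since $\bigl[\widetilde{r}(x)-\widetilde{r}(y)\bigr]\widehat{T}(x,y)=x-y$ with $\widehat{T}$ symmetric forces $\widehat{T}=1/r$. This is cleaner and more symmetric than the paper's argument, and it makes transparent why the two cases are mutually exclusive (a constant $\widetilde{r}$ together with a symmetric $\widehat{T}$ would contradict the first identity). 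Your sufficiency verifications match the paper's. Two cosmetic points only: you should note explicitly that $\widetilde{r}\neq0$ (immediate from $0\neq x-y$) before invoking a root, and recall that $\widetilde{r}(x)-\widetilde{r}(y)=0$ as a two-variable polynomial forces $\widetilde{r}$ constant in any characteristic, so no separate care is needed there.
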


\begin{proof}
Comparing our first expression with its $s$-image implies that $x-y$ lies, in the ring of polynomials in $x$ and $y$, in the ideal generated by $\widetilde{r}(x)$ and $\widetilde{r}(y)$. But modulo that ideal, the monomials $x^{a}y^{b}$ with $a$ and $b$ smaller than the degree of $f$ are still linearly independent. It follows that if this degree is at least 2, then $x-y$ cannot vanish modulo this ideal, hence cannot lie in this ideal. This proves the first assertion.

Assuming now that the degree of $\widetilde{r}$ is 1, so we can write $\widetilde{r}(x)$ as $a(x+b)$ for some constants $b$ and $a\neq0$, and then $a(x+b)\big(\widehat{T}(x,y)-\frac{1}{a}\big)$ yields the expression $\widetilde{r}(x)\widehat{T}(x,y)-x-b$, which was assumed to be symmetric (subtracting the constant $b$ does not affect this property). As it is divisible by $x+b$, it must also be divisible by $y+b$, meaning that $\widehat{T}(x,y)$ can be written as $(y+b)U(x,y)+\frac{1}{a}$ for some polynomial $U$. Moreover, the symmetric expression in question then becomes $a(x+b)(y+b)U(x,y)$, so that $U$ is also symmetric.

Now, our second expression is divisible by $\widehat{T}(x,y)=(y+b)U(x,y)+\frac{1}{a}$ and is symmetric, so that it is also divisible by $\widehat{T}(y,x)=(x+b)U(x,y)+\frac{1}{a}$ (using the symmetry of $U$). If $U\neq0$ then this polynomial does not divide $\widehat{T}(x,y)$, so that it has to divide the other multiplier, which is the scalar $a$ times the expression $a(x+b)^{2}\big[(y+b)U(x,y)+\frac{1}{a}]-(y+b)(x-y)$. If we subtract the multiple $a(y+b)(x+b)[(x+b)U(x,y)+\frac{1}{a}]$ of $\widehat{T}(y,x)$, then the difference, which equals $(x+b)^{2}-(y+b)(x+b)-(y+b)(x-y)=(x-y)^{2}$, is also divisible by $\widehat{T}(y,x)$. But as neither $x-y$ nor its square are of the form $(x+b)U(x,y)+\frac{1}{a}$, we deduce that $U$ cannot be non-zero. But then $U=0$, $\widehat{T}=\frac{1}{a}$, and $\widehat{T}\widetilde{r}(x)$ is the monic polynomial $x+b$, so that $\widetilde{r}(x)\widehat{T}-x$ is the (symmetric) constant $b$ and $\widetilde{r}(x)^{2}\widehat{T}^{2}-\widetilde{r}(y)(x-y)\widehat{T}$ indeed yields the symmetric expression $x^{2}-xy+y^{2}+bx+by+b^{2}$.

If $\widetilde{r}$ is a constant then $\widetilde{r}\neq0$ (since $-x$ is not symmetric), and the first condition expresses the constant multiple $\widetilde{r}\widehat{T}(x,y)$ of $\widehat{T}(x,y)$ as $x+U(x,y)$ for some symmetric polynomial $U$. Then $\widetilde{r}^{2}\widehat{T}(x,y)^{2}-\widetilde{r}\widehat{T}(x,y)(x-y)$ equals the product $(x+U)(y+U)$, which is symmetric since $U$ is. This completes the proof of the lemma.
\end{proof}

Another auxiliary result that we shall require is the following one.
\begin{lem}
Let $T$ and $Q_{0}$ be two non-zero polynomials in the variables $x$ and $y$ that are both divisible by $x-y$, such that the products $T \cdot sT$ and $Q_{0} \cdot sQ_{0}$ coincide. Then there exist polynomials $\varphi$ and $\psi$ such that $T(x,y)=\varphi(x,y)\psi(x,y)$ and $Q_{0}(x,y)=\varphi(x,y)\psi(y,x)$. \label{eqsymprod}
\end{lem}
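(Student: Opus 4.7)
The plan is to work in the UFD $k[x,y]$ and recover $\varphi$ and $\psi$ from a prime-by-prime analysis of the equation $T\cdot sT=Q_{0}\cdot sQ_{0}$, exploiting the fact that $s$ acts as an involution on the set of irreducibles modulo associates.

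First I would classify the irreducible factors into three types: $s$-symmetric ones (for which $sp=p$ after normalization), anti-symmetric ones (for which $sp=-p$, which over $k[x,y]$ forces $p\sim x-y$ by noting that $p(x,x)=-p(x,x)$), and paired ones $\{q,sq\}$ with $q\not\sim sq$. Comparing $p$-adic valuations on both sides of the given norm equality, the symmetric and anti-symmetric irreducibles must appear with exactly the same multiplicity in $T$ and in $Q_{0}$, while for each paired orbit one obtains only the weaker constraint $v_{q}(T)+v_{sq}(T)=v_{q}(Q_{0})+v_{sq}(Q_{0})$.

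With these constraints in hand, I would construct $\varphi$ and $\psi$ by placing all self-associate factors of $T$ into $\varphi$ (together with the leading constant of $T$, and a copy of $1$ into $\psi$), and, for each paired orbit $\{q,sq\}$ with multiplicities $(b,b')$ in $T$ and $(\beta,\beta')$ in $Q_{0}$, selecting non-negative multiplicities $(e,e')$ of $q,sq$ in $\varphi$ and $(f,f')$ in $\psi$ satisfying $e+f=b$, $e'+f'=b'$, $e+f'=\beta$, and $e'+f=\beta'$. The shared-sum constraint from the previous step makes this linear system consistent, and a solution such as $e=\min(b,\beta)$ has non-negative integer entries. This gives $\varphi\psi=T$ on the nose, while $\varphi\cdot s\psi$ agrees with $Q_{0}$ up to some scalar $\mu\in k^{*}$; applying $s$ and comparing the product $\varphi\cdot s\psi\cdot s\varphi\cdot\psi=\mu^{2}Q_{0}\cdot sQ_{0}$ against $T\cdot sT$ then forces $\mu^{2}=1$.

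The main obstacle is the case $\mu=-1$, which can occur only in characteristic different from two and which cannot be removed by any rescaling of $\varphi$ and $\psi$ by constants, since such constants commute with $s$ and therefore leave $\mu$ invariant. This is precisely where the hypothesis that $x-y$ divides both $T$ and $Q_{0}$ becomes indispensable: since $s(x-y)=-(x-y)$, moving a single factor of $x-y$ from $\varphi$ into $\psi$ leaves the product $\varphi\psi=T$ unchanged but multiplies $\varphi\cdot s\psi$ by $-1$, flipping $\mu$ to $+1$ and completing the construction.
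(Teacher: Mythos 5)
Your proof is correct and is in substance the same as the paper's: both arguments split off a common factor $\varphi$ (your explicit valuation bookkeeping, with $e=\min(b,\beta)$ on each paired orbit and all self-associate factors placed in $\varphi$, constructs precisely the $\gcd(T,Q_{0})$ that the paper takes directly), deduce that $\varphi\cdot s\psi=\varepsilon Q_{0}$ with $\varepsilon^{2}=1$, and resolve the sign $\varepsilon=-1$ by transferring one factor of $x-y$ from $\varphi$ to $\psi$. No gaps.
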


\begin{proof}
We take $\tilde{\varphi}$ to be the greatest common divisor of $T$ and $Q_{0}$ (the choice of the precise scalar multiple will not play a role here) and let $\tilde{\psi}$ be the polynomial such that $T=\tilde{\varphi}\tilde{\psi}$. But then $Q_{0} \cdot sQ_{0}=T \cdot sT$ expands as $\tilde{\varphi}\cdot s\tilde{\varphi}\cdot\tilde{\psi} \cdot s\tilde{\psi}$, and since $\tilde{\varphi}$ divides $Q_{0}$ and thus $s\tilde{\varphi}$ divides $sQ_{0}$, we get that $\tilde{\psi} \cdot s\tilde{\psi}$ equals $(Q_{0}/\tilde{\varphi}) \cdot s(Q_{0}/\tilde{\varphi})$. But the $\gcd$ property of $\tilde{\varphi}$ implies that $\tilde{\psi}=T/\tilde{\varphi}$ is co-prime to $Q/\tilde{\varphi}$, so it must divide $s(Q_{0}/\tilde{\varphi})$, and therefore $s\tilde{\psi}$ divides $Q_{0}/\tilde{\varphi}$.

We can thus write $Q_{0}$ as $\varepsilon\tilde{\varphi} \cdot s\tilde{\psi}$, so that $sQ=s\tilde{\varepsilon} \cdot s\tilde{\varphi}\cdot\tilde{\psi}$, and by comparing the product of these expansions with the one for $Q_{0} \cdot sQ_{0}$, we deduce that $\varepsilon \cdot s\varepsilon=1$. This implies that $\varepsilon$ is a scalar satisfying $\varepsilon^{2}=1$, so that $\varepsilon\in\{\pm1\}$.

If $\varepsilon=+1$ then we are done by taking $\varphi=\tilde{\varphi}$ and $\psi=\tilde{\psi}$. Otherwise $\varepsilon=-1$ we recall that $x-y$ divides both $T$ and $Q_{0}$, so that it divides $\tilde{\varphi}$. Then $\tilde{\varphi}(x,y)=(x-y)\varphi(x,y)$, and by setting $\psi(x,y)=(x-y)\tilde{\psi}(x,y)$, we still get $T=\varphi\psi$. By observing that $\psi(y,x)=-(x-y)\tilde{\psi}(y,x)$, we deduce that $\varphi \cdot s\psi=-\tilde{\varphi} \cdot s\tilde{\psi}=Q_{0}$, as desired. This proves the lemma.
\end{proof}

\medskip

We obtain at the following result, for which we define $\zeta$ to be a scalar satisfying $\zeta^{2}-\zeta+1=0$ (namely a primitive 6th root of unity, away from characteristics 2 and 3). Note that the other root of that polynomial, which we denote by $\overline{\zeta}$, is given by either $1/\zeta$ of $1-\zeta$.
\begin{prop}
Take again $\pi$ and $\varpi$ as above, with the cubic braid relation, and assume that $\widetilde{Q}_{0}=0$. Then there are three cases.
\begin{enumerate}[(1)]
\item $\pi f=\varpi f=r(y)f$ for a univariate polynomial $r\neq0$.
\item $\varpi f=a(y+b)f$ and $\pi f=\begin{cases} a(x+b)[(\zeta x+\overline{\zeta}y+b)\partial f+\overline{\zeta}f] & \\ a(x+b)[(\overline{\zeta}x+\zeta y+b)\partial f+\zeta f] & \\ a(y\!+\!b)(\zeta x+\overline{\zeta}y+b)\partial f\!+\!a\big(x\!+\!\overline{\zeta}y\!+\!(1\!+\!\overline{\zeta})b\big)f & \\ a(y\!+\!b)(\overline{\zeta}x+\zeta y+b)\partial f\!+\!a\big(x\!+\!\zeta y\!+\!(1\!+\!\zeta)b\big)\!f\!, & \end{cases}$ for some constants $a\neq0$ and $b$, if $\zeta$ exists in our field of definition.
\item $\varpi$ is a multiple $\mu\operatorname{Id}$ of the identity operator and there exist two polynomials $\varphi$ and $\psi$ such that $\partial(\varphi\psi)=\mu$ and $\pi f=\varphi\partial(\psi f)$.
\end{enumerate} \label{degenQ}
\end{prop}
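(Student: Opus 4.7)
Plan: First, I invoke Lemma \ref{Q0univ} on the assumption $\widetilde{Q}_0=0$ to obtain $\widetilde{R}^0_{yz}=\widetilde{r}(y)$ for a non-zero univariate $\widetilde{r}$, together with $\widetilde{T}_{yz}=(y-z)\widetilde{r}(y)$. The explicit formula $\varpi g=\frac{\widetilde{T}_{yz}g-\widetilde{Q}^0_{yz}\sigma g}{y-z}$ from the proof of Corollary \ref{Tcan} then collapses $\varpi$ to multiplication by $\widetilde{r}(y)$. If moreover $Q_0=0$, the same lemma gives $\pi f=r(y)f$ for a non-zero univariate $r$, and a direct computation shows $\pi\varpi\pi f=r(y)^{2}\widetilde{r}(y)f$ and $\varpi\pi\varpi f=\widetilde{r}(y)^{2}r(y)f$; equating these forces $r=\widetilde{r}$, which is case (1).

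Henceforth assume $Q_0\neq 0$. Substituting $\widetilde{Q}_0=0$, $\widetilde{T}_{yz}=(y-z)\widetilde{r}(y)$, and $\widetilde{T}_{xz}=(x-z)\widetilde{r}(x)$ into Lemma \ref{expansions} kills every term involving $\sigma f$, $s\sigma f$, $\sigma sf$, and $\sigma s\sigma f$ on both sides, so only the coefficients of $f$ and $sf$ remain. Matching the $sf$-coefficient (and canceling the non-zero $Q^0_{xy}$) and the $f$-coefficient yields, respectively,
\[
\widetilde{r}(y)T_{xy}-\widetilde{r}(x)T_{yx}=\widetilde{r}(x)\widetilde{r}(y)(x-y)\qquad(\mathrm{I})
\]
and
\[
\widetilde{r}(x)Q^0_{xy}Q^0_{yx}=\widetilde{r}(y)T_{xy}\bigl[T_{xy}-\widetilde{r}(y)(x-y)\bigr]\qquad(\mathrm{II}).
\]
Coprimality of $\widetilde{r}(x)$ and $\widetilde{r}(y)$ in the bivariate polynomial ring forces $\widetilde{r}(x)\mid T_{xy}$. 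Multiplying (II) and its $(x,y)$-swap by $\widetilde{r}(y)$ and $\widetilde{r}(x)$ respectively, using (I) to simplify $[\widetilde{r}(y)T_{xy}-\widetilde{r}(x)T_{yx}]$, and factoring the resulting identity yields $[\widetilde{r}(x)-\widetilde{r}(y)]\bigl[\widetilde{r}(y)^{2}T_{xy}-\widetilde{r}(x)^{2}T_{yx}\bigr]=0$, splitting the problem into two subcases.

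In the first subcase, $\widetilde{r}=\mu$ is constant, so $\varpi=\mu\operatorname{Id}$, (I) reduces to $\partial T=\mu$, and (II) becomes $T\cdot sT=Q_0\cdot sQ_0$. I would then apply Lemma \ref{eqsymprod} to produce a factorization $T=\varphi\psi$, $Q_0=\varphi\cdot s\psi$; the divisibility of $T$ and $Q_0$ by $x-y$ used in the $\varepsilon=-1$ branch of that lemma's proof is itself forced by polynomiality of $R_0=(T-Q_0)/(x-y)$, since a $-1$ sign would otherwise give $(T-Q_0)\bigr|_{x=y}=2T\bigr|_{x=y}\neq 0$ (in characteristic other than $2$). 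The formula from Corollary \ref{Tcan} then identifies $\pi f=\frac{\varphi\psi f-\varphi\cdot s\psi\cdot sf}{x-y}=\varphi\partial(\psi f)$, and $\partial(\varphi\psi)=\partial T=\mu$, recovering case (3).

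In the second subcase, $\widetilde{r}$ is non-constant and $\widetilde{r}(x)^{2}\mid T$; write $T=\widetilde{r}(x)^{2}\widehat{T}(x,y)$ with $\widehat{T}$ symmetric. Substituting into (I) gives $[\widetilde{r}(x)-\widetilde{r}(y)]\widehat{T}(x,y)=x-y$, at which point Lemma \ref{fromsym} applies (its second conclusion is excluded since $\widetilde{r}$ is non-constant), forcing $\widetilde{r}(y)=a(y+b)$ and $\widehat{T}=1/a$, so $T=a(x+b)^{2}$. Feeding this into (II) gives
\[
Q^0_{xy}\cdot Q^0_{yx}=a^{2}(x+b)(y+b)\bigl(x^{2}-xy+y^{2}+bx+by+b^{2}\bigr),
\]
whose quadratic factor is $(x-\bar\zeta y+\zeta b)(x-\zeta y+\bar\zeta b)$ exactly when $\zeta$ lies in the field. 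Using unique factorization, $Q_0$ must contain one element from each $s$-orbit of irreducible factors, giving four combinatorial shapes (one of $(x+b),(y+b)$ times one of the two linear factors of the quadratic); the overall scalar $\alpha$ is pinned down up to sign by matching leading coefficients, using that $s$ sends each linear factor to a scalar multiple of the other with scalar $-\bar\zeta$ or $-\zeta$. Polynomiality of $R_0$, equivalently the condition $T\bigr|_{x=y}=Q_0\bigr|_{x=y}$, selects exactly one sign in each shape, and a direct computation of $\pi f=Q_0\partial f+R_0 f$ in the four resulting cases recovers the four formulas of case (2). The main obstacle is precisely this last enumeration, where the nontrivial $\zeta$-scalars relating each linear factor to its $s$-image must be tracked very carefully to identify which sign is consistent with a polynomial operator.
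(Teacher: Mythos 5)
Your proposal is correct and follows the same overall strategy as the paper: invoke Lemma \ref{Q0univ}, reduce Lemma \ref{expansions} to the coefficients of $f$ and $sf$ (your (I) and (II) are exactly the paper's Equations \eqref{fromsf} and \eqref{eqfromf}), and then route the two surviving subcases through Lemmas \ref{fromsym} and \ref{eqsymprod} respectively, finishing with the formula $\pi f=\frac{T_{xy}f-Q^{0}_{xy}sf}{x-y}$ from Corollary \ref{Tcan}. Two points where you genuinely deviate are worth recording. First, you obtain the case split by eliminating $Q^{0}_{xy}Q^{0}_{yx}$ between (II) and its $s$-image and factoring the result as $[\widetilde{r}(x)-\widetilde{r}(y)][\widetilde{r}(y)^{2}T_{xy}-\widetilde{r}(x)^{2}T_{yx}]=0$ (I checked this identity; it is correct, using (I) and the non-vanishing of $\widetilde{r}(x)\widetilde{r}(y)(x-y)$), so that ``$\widetilde{r}$ constant'' versus ``$\widetilde{r}(x)^{2}\mid T$ with $T/\widetilde{r}(x)^{2}$ symmetric'' falls out in one stroke; the paper instead reaches $\widetilde{r}(x)^{2}\mid T_{xy}$ by a chain of divisibility arguments on \eqref{eqfromf} and lets Lemma \ref{fromsym} produce the dichotomy. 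Your route is cleaner, and in the non-constant branch the relation $[\widetilde{r}(x)-\widetilde{r}(y)]\widehat{T}=x-y$ even gives $\widehat{T}=1/a$ directly once the degree bound from the first part of Lemma \ref{fromsym} is in hand, so you barely need that lemma's second conclusion. Second, your observation about Lemma \ref{eqsymprod} addresses a real imprecision: that lemma's hypothesis that $T$ and $Q_{0}$ are divisible by $x-y$ does not hold here (indeed $\partial T=\mu\neq0$), and your argument---that the $\varepsilon=-1$ branch forces $(T-Q_{0})|_{x=y}=2T|_{x=y}$, which polynomiality of $R_{0}$ makes zero, so either that branch is vacuous or the divisibility holds after all---is exactly the patch needed, with the characteristic-$2$ caveat handled by $\varepsilon=-1$ coinciding with $\varepsilon=+1$ there. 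The only soft spot is the final enumeration in Case (2): your normalization $(x-\overline{\zeta}y+\zeta b)(x-\zeta y+\overline{\zeta}b)$ of the quadratic factor is valid but makes the $s$-action on the factors carry nontrivial scalars ($-\overline{\zeta}$ and $-\zeta$), whereas the paper's factors $\zeta x+\overline{\zeta}y+b$ and $\overline{\zeta}x+\zeta y+b$ are exchanged by $s$ on the nose, which makes the sign selection via $T|_{x=y}=Q_{0}|_{x=y}$ immediate; you flag this bookkeeping as the main obstacle rather than carrying it out, but the framework you set up does pin down the scalar uniquely in each of the four shapes, so nothing is missing in substance.
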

We remark that in characteristic 3 we have $\zeta=\overline{\zeta}=1/\zeta=1-\zeta=-1$, and the four cases for $\pi f$ in the second case reduce to the two expressions $a(x+b)[(b-x-y)\partial f-f]$ and $a(y+b)(b-x-y)\partial f+a(x-y)f$.

\begin{proof}
Lemma \ref{Q0univ} expresses $\widetilde{T}_{yz}$ as $(y-z)\widetilde{R}^{0}_{yz}$ and $\widetilde{R}^{0}_{yz}$ as $\widetilde{r}(y)$ for some univariate polynomial $\widetilde{r}$ (so that $\widetilde{R}^{0}_{xz}=\widetilde{r}(x)$), and Equation \eqref{beforeR=r} becomes
\begin{equation}
\widetilde{r}(y)^{2}T_{xy}(x-y)-\widetilde{r}(y)T_{xy}^{2}+\widetilde{r}(x)Q^{0}_{xy}Q^{0}_{yx}=0. \label{eqfromf}
\end{equation}

We now consider the coefficients of $sf$ in Lemma \ref{expansions}, which with our value of $\widetilde{T}$ and $\widetilde{R}_{0}$ produce, after canceling $\frac{(x-z)(y-z)}{x-y}$, the equality
\begin{equation}
Q^{0}_{xy}[\widetilde{r}(y)T_{xy}-\widetilde{r}(x)T_{yx}-\widetilde{r}(y)\widetilde{r}(x)(x-y)]=0. \label{fromsf}
\end{equation}
Now, if $Q_{0}=0$ as well then we can also replace $T_{xy}$ by $(x-y)r(y)$, and the last term in Equation \eqref{eqfromf} vanishes. Then we can cancel the multiplier $(x-y)^{2}$, as well as the non-vanishing product $r(y)\widetilde{r}(y)$, and this equality reduces to $\widetilde{r}(y)=r(y)$. This produces the first case.

We can thus henceforth assume that $Q_{0}\neq0$, and so after canceling $Q^{0}_{xy}$ from Equation \eqref{fromsf}, we deduce that $\widetilde{r}(x)$ divides $T_{xy}$. We also note that $\widetilde{r}(y)$ divides the rightmost term in Equation \eqref{eqfromf}, so that it divides the symmetric polynomial $Q^{0}_{xy}Q^{0}_{yx}$ (indeed, this is clear when it is constant, and otherwise none of its irreducible divisors divide $\widetilde{r}(x)$), so by symmetry $\widetilde{r}(x)$ also divides it. As $\widetilde{r}(x)^{2}$ then divides the last two terms in that equation, it must divide $T_{xy}$ as well (since when it is not a constant, its irreducible divisors will divide neither $\widetilde{r}(y)$ nor $x-y$). We thus write $T_{xy}$ as $\widetilde{r}(x)^{2}\widehat{T}_{xy}$, and after substituting it into Equation \eqref{fromsf} and canceling $Q^{0}_{xy}\widetilde{r}(x)\widetilde{r}(y)$, we obtain that $\partial[\widetilde{r}(x)\widehat{T}_{xy}]=1$. But this implies, via Lemma \ref{symimdi} and the $\partial$-positive pre-image $x$ of 1 under $\partial$, that $\widetilde{r}(x)\widehat{T}_{xy}-x$ is symmetric. In addition, substituting this expression for $T_{xy}$ into Equation \eqref{eqfromf} and dividing by $\widetilde{r}(x)^{2}\widetilde{r}(y)$ shows that $\widetilde{r}(x)^{2}\widehat{T}_{xy}^{2}-\widetilde{r}(y)\widehat{T}_{xy}(x-y)$ is the symmetric function $Q^{0}_{xy}Q^{0}_{yx}/\widetilde{r}(x)\widetilde{r}(y)$.

But now we are in the setting considered in Lemma \ref{fromsym}, meaning that there are two possible situations. In the first one we have $\widetilde{r}(x)=a(x+b)$ and $\widehat{T}=\frac{1}{a}$ for constants $a\neq0$ and $b$, and $\widehat{T}^{2}\widetilde{r}(x)^{2}-\widehat{T}\widetilde{r}(y)(x-y)$ was seen in the proof of that lemma to be $x^{2}-xy+y^{2}+bx+by+b^{2}$. When $\zeta$ exists in our field of definition, this polynomial decomposes as the product of $\zeta x+\overline{\zeta}y+b$ and $\overline{\zeta}x+\zeta y+b$. Comparing this expression with the value $Q^{0}_{xy}Q^{0}_{yx}/a^{2}(x+b)(y+b)$ of $Q^{0}_{xy}Q^{0}_{yx}/\widetilde{r}(x)\widetilde{r}(y)$ and recalling that $Q^{0}_{yx}$ is the $s$-image of $Q^{0}_{xy}$, we deduce that $Q^{0}_{xy}$ must be one of the four polynomials $a(x+b)(\zeta x+\overline{\zeta}y+b)$, $a(x+b)(\overline{\zeta}x+\zeta y+b)$, $a(y+b)(\zeta x+\overline{\zeta}y+b)$, and $a(y+b)(\overline{\zeta}x+\zeta y+b)$, or their inverses Recalling that Equation \eqref{Tpol} (with $P=0$) expresses $R_{0}(x,y)$ as $\frac{T(x,y)-Q_{0}(x,y)}{x-y}$, and $T(x,y)=\widehat{T}\widetilde{r}(x)^{2}=a(x+b)^{2}$, we obtain the asserted four options for $\pi f$ in the second case with these polynomials, and the inverses do not give more options as with them the difference $T-Q_{0}$ is not divisible by $x-y$.

In the second one $\widetilde{r}$ is a non-zero constant, which we denote by $\mu$ (so that $\varpi=\mu\operatorname{Id}$), and the symmetry of $e\widehat{T}_{xy}-x=T_{xy}/\mu-x$ is equivalent to the equality $\partial T_{xy}=\mu$. But then Equation \eqref{eqfromf} compares $Q^{0}_{xy}Q^{0}_{yx}$ with $T_{xy}\big(T_{xy}-\mu(x-y)\big)$, and the value of $\partial T_{xy}$ implies that the latter multiplier is just $T_{yx}$. We can then invoke Lemma \ref{eqsymprod}, and write $T=\varphi\psi$ and $Q_{0}=\varphi \cdot s\psi$ for polynomials $\varphi$ and $\psi$. Recalling the formula $\frac{T_{xy}f-Q^{0}_{xy}sf}{x-y}$ from the proof of Corollary \ref{Tcan} for $\pi f$, substituting these expressions for $T$ and $Q_{0}$ and recalling the definition of $\partial$ implies that this is the situation described in the third case. This completes the proof of the proposition.
\end{proof}

\begin{rmk}
Assuming, analogously to Proposition \ref{degenQ}, that $Q_{0}=0$ now, we get the same Case (1) there. When comparing the coefficients of $f$ and $\sigma f$, we obtain that if $\widetilde{Q}_{0}\neq0$ then $\widetilde{T}_{yz}=r(z)^{2}\widehat{T}(y,z)$ with $\delta[r(z)\widehat{T}_{yz}]=1$. Lemma \ref{fromsym} applies, with a very similar proof, for a univariate polynomial $r$, using the symmetry of $r(z)\widehat{T}(y,z)+z$ and $r(z)^{2}\widehat{T}(y,z)^{2}-r(y)\widehat{T}(y,z)(y-z)$ in $y$ and $z$, but now we take $r(z)=-a(z+b)$ to get $\widehat{T}=+\frac{1}{a}$ (with the same expression $y^{2}-yz+z^{2}+by+bz+b^{2}$) and when $r$ is a constant the symmetric product is $(U-y)(U-z)$. In Case (2) we then have \[\pi f=-a(y+b)f\mathrm{\ and\ }\varpi f=\begin{cases} a(z+b)[(\zeta y+\overline{\zeta}z+b)\partial f-\zeta f] & \\ a(z+b)[(\overline{\zeta}y+\zeta z+b)\partial f-\overline{\zeta}f] & \\ a(z+b)(\zeta y+\overline{\zeta}z+b)\partial f-a\big(\zeta y\!+\!z+(1\!+\!\zeta)b\big)f & \\ a(z+b)(\overline{\zeta}y+\zeta z+b)\partial f-a\big(\overline{\zeta}y\!+\!z+(1\!+\!\overline{\zeta})b\big)f & \end{cases}\] (note the signs), which in characteristic 3 reduce to $a(z+b)[(b-y-z)\partial f+f]$ and $a(z+b)(b-y-z)\partial f+a(y-z)f$. If $r$ is the constant $\mu$, then $\delta\widetilde{T}_{yz}=\mu$, the product $\widetilde{Q}^{0}_{yz}\widetilde{Q}^{0}_{zy}$ equals $\widetilde{T}_{yz}\widetilde{T}_{zy}$, and Case (3) becomes with $\pi=\mu\operatorname{Id}$ and $\varpi f=\widetilde{\varphi}\partial(\widetilde{\psi}f)$ for polynomials $\widetilde{\varphi}$ and $\widetilde{\psi}$ in $y$ and $z$ that satisfy $\delta(\widetilde{\varphi}\widetilde{\psi})=\mu$. \label{Qdegen}
\end{rmk}

Proposition \ref{degenQ} and Remark \ref{Qdegen} consider the case of two operators, involving three variables. In general, however, we will be interested in families of polynomial divided difference operators involving more variables and more operators. We then obtain the following consequence.
\begin{cor}
Consider, for $n\geq4$, a family $\{\pi_{i}\}_{i=1}^{n-1}$ of non-zero polynomial divided difference operators, again with the cubic braid relations. Then any operator $\pi_{i}$ for which the associated $Q_{0}$ polynomial vanishes a non-zero scalar multiple of $\operatorname{Id}$. Moreover, each of the operators $\pi_{i-1}$ and $\pi_{i+1}$ is either of the form $f\mapsto\varphi\partial(\psi f)$ where $\varphi$ and $\psi$ are polynomials whose product satisfies $\partial(\varphi\psi)=\mu$, or also equals $\mu\operatorname{Id}$. \label{ngeq4sc}
\end{cor}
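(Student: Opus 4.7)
The overall plan is to split into the interior and edge positions for an index $i$ with $Q_{0,i}=0$, handle the first by a symmetric double application of Lemma \ref{Q0univ}, bootstrap the edge case from the interior one plus a subcase analysis of Remark \ref{Qdegen}, and finally read off the conclusion for the neighbors.

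\emph{Interior case} ($2\leq i\leq n-2$). Since the cubic braid relation holds for both the pair $(\pi_{i-1},\pi_{i})$ and the pair $(\pi_{i},\pi_{i+1})$, I apply Lemma \ref{Q0univ} twice. In the first pair, $\pi_{i}$ plays the role of $\varpi$ with $\widetilde{Q}_{0}=0$, so the first assertion of the lemma gives $R_{0,i}(x_{i},x_{i+1})=\widetilde{r}(x_{i})$. In the second pair, $\pi_{i}$ plays the role of $\pi$ with $Q_{0}=0$, so the ``similarly'' clause gives $R_{0,i}(x_{i},x_{i+1})=r(x_{i+1})$. A polynomial depending only on $x_{i}$ and only on $x_{i+1}$ is constant, hence $R_{0,i}=\mu$, and the formula $\pi_{i}f=\frac{T_{i}f-Q_{0,i}s_{i}f}{x_{i}-x_{i+1}}$ from the proof of Corollary \ref{Tcan}, together with $Q_{0,i}=0$ and $T_{i}=(x_{i}-x_{i+1})R_{0,i}$, yields $\pi_{i}f=\mu f$ with $\mu\neq 0$ since $\pi_{i}\neq 0$.

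\emph{Edge case} ($i=1$; the case $i=n-1$ is symmetric). Apply Remark \ref{Qdegen} to $(\pi_{1},\pi_{2})$. In Case (1), $\pi_{1}=\pi_{2}=r(x_{2})\operatorname{Id}$; then $\pi_{2}$ also has $Q_{0}=0$, and since $n\geq 4$ the operator $\pi_{2}$ is in the interior, so the interior argument applied to $\pi_{2}$ forces $\pi_{2}=\mu\operatorname{Id}$, making $r$ constant and $\pi_{1}=\mu\operatorname{Id}$. Case (3) gives $\pi_{1}=\mu\operatorname{Id}$ immediately. The delicate subcase is Case (2), where $\pi_{1}=-a(x_{2}+b)\operatorname{Id}$ with $a\neq 0$ and $\pi_{2}$ is a specific non-degenerate operator built from $\zeta$. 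I rule this out using the braid relation between $\pi_{2}$ and $\pi_{3}$: a direct calculation of the $T$-polynomial in each of the four forms in Remark \ref{Qdegen} Case (2) gives either $T_{2}=a(x_{3}+b)^{2}$ or a quadratic in $(x_{2},x_{3})$ with a nonzero $x_{3}^{2}$-coefficient, so in neither form is $T_{2}$ bilinear of the shape $\alpha x_{2}x_{3}+\beta x_{2}+\gamma x_{3}+\delta$. Hence $\pi_{3}$ cannot be non-degenerate by Proposition \ref{sfsigmaf}; it cannot have $T_{3}=0$ either, for then Proposition \ref{degenT} would force $T_{2}=0$; so $Q_{0,3}=0$. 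Now I apply Proposition \ref{degenQ} to $(\pi_{2},\pi_{3})$: Case (1) contradicts $Q_{0,2}\neq 0$; in Case (2) the four forms for $\pi_{2}$ all produce $T_{2}=a'(x_{2}+b')^{2}$, incompatible with the $T_{2}$ obtained from the Remark (a polynomial of $x_{3}$ alone or carrying a $z^{2}$ term); and in Case (3) one would need $\partial T_{2}$ to be constant, whereas a straightforward computation shows $\partial T_{2}$ is nonconstant for every form coming from the Remark.

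\emph{The ``moreover'' assertion.} Having $\pi_{i}=\mu\operatorname{Id}$, apply Proposition \ref{degenQ} to $(\pi_{i-1},\pi_{i})$ with $\pi_{i}$ in the role of $\varpi$: Case (2) would force $\pi_{i}=a(x_{i}+b)\operatorname{Id}$ with $a\neq 0$, contradicting $\pi_{i}=\mu\operatorname{Id}$; only Case (1), which yields $\pi_{i-1}=\mu\operatorname{Id}$, and Case (3), which yields $\pi_{i-1}f=\varphi\partial(\psi f)$ with $\partial(\varphi\psi)=\mu$, survive. The symmetric application of Remark \ref{Qdegen} to $(\pi_{i},\pi_{i+1})$ gives the same dichotomy for $\pi_{i+1}$.

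The hard part is the subcase analysis ruling out Case (2) of Remark \ref{Qdegen} in the edge argument: it requires unpacking the four explicit forms, computing their $T$-polynomials, and comparing against the constraints imposed by $\pi_{3}$ through all three cases of Proposition \ref{degenQ}, with particular attention to characteristic $3$ where the four forms degenerate to two and some $T$-coefficients collapse. The rest of the argument is essentially bookkeeping on top of Lemma \ref{Q0univ} and the already-proved Propositions \ref{degenT}, \ref{sfsigmaf}, and \ref{degenQ}.
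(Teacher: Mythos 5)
Your proposal is correct and follows essentially the same route as the paper's proof: interior indices are handled by the two-sided application of Lemma \ref{Q0univ} forcing $R_{0,i}$ to depend on neither variable, edge indices by excluding Case (2) of Proposition \ref{degenQ}/Remark \ref{Qdegen} through the braid relation with the next operator inward (comparing the forced $T$-polynomial $a(x+b)^{2}$ against Proposition \ref{sfsigmaf}, Proposition \ref{degenT}, and the three cases of Proposition \ref{degenQ}), and the ``moreover'' clause by rereading the surviving cases. If anything, you are slightly more explicit than the paper on the variable-dependence bookkeeping in the interior case and on the final dichotomy for $\pi_{i\pm1}$, which the paper leaves largely implicit.
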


\begin{proof}
We first consider $2 \leq i \leq n-2$ (which is possible since $n\geq4$). Since $i\geq2$, we can apply Proposition \ref{degenQ} with $\pi=\pi_{i-1}$ and $\varpi=\pi_{i}$. Then we have, in all three cases, that $\pi_{i}f=\widetilde{r}(x_{i+1})f$ for some non-zero univariate polynomial $\widetilde{r}$ (this was seen already in Lemma \ref{Q0univ}). But with $i \leq n-2$ we can also consider Remark \ref{Qdegen} with $\pi=\pi_{i}$ and $\varpi=\pi_{i+1}$ (or Lemma \ref{Q0univ} again), and obtain that $\pi_{i}f=r(x_{i-1})f$ for a non-zero univariate polynomial $\widetilde{r}$. But these two can happen together if and only if both polynomials are the same scalar $\mu$.

Assume now that $i=n-1$, and we take $\varpi=\pi_{n-1}$ and $\pi=\pi_{n-2}$ in Proposition \ref{degenQ}. Hence $\varpi f=r(x_{n-1})f$, and we need to show that $r$ is a scalar, which is automatic in Case (3). In Case (1) we have the same polynomial also for $\pi=\pi_{n-2}$, with $n-2\geq2$, and we already saw that this polynomial must then be a scalar. We need to show that Case (2) cannot happen, for which we observe, via the proof of Proposition \ref{degenQ}, that the polynomial $T$ associated with $\pi$ is $a(x_{n-2}+b)^{2}$ with $a\neq0$, and the corresponding polynomial $Q_{0}$ is also non-zero. When we consider the braid relation between $\pi_{n-3}$ and $\pi_{n-2}$, note that if the polynomial $Q_{0}$ that is associated with $\pi_{n-3}$ is non-zero, then Lemma \ref{Qneq0} shows that the two operators must produce the same polynomial $T$, which was seen to be non-zero. But then Proposition \ref{sfsigmaf} forces $T$ the quadratic form of $T(x_{n-2},x_{n-1})$ to be a multiple of $x_{n-2}x_{n-1}$, a property that our value of $T$ does not have.

It remains to consider the situation in Case (2) where $\pi_{n-3}$ has a vanishing polynomial $Q_{0}$, so that we are in the situation considered in Remark \ref{Qdegen} for $\pi=\pi_{n-3}$ and $\varpi=\pi_{n-2}$. But our situation is not the one described in Case (1) there since the polynomial $Q_{0}$ associated with $\pi_{n-2}$ is non-zero. Case (2) there would have implies that the polynomial $T$ of $\pi_{n-2}$ is of the form $c(x_{n-1}+d)^{2}$, which is impossible since our $T$ is $a(x_{n-2}+b)^{2}$. Finally, we saw in the proof of Proposition \ref{sfsigmaf} and Remark \ref{Qdegen} that in Case (3) the product $\varphi\psi$ is the polynomial $T$. But the condition that $\partial_{n-2}(\varphi\psi)$ is a constant does not hold for our $T$, which means that this last option for Case (2) for $\varpi=\pi_{n-1}$ and $\pi=\pi_{n-2}$ in Proposition \ref{degenQ} is impossible as well, and $\pi_{n-1}$ is as desired.

The remaining index $i=1$ is dealt with in an analogous manner, by invoking Remark \ref{Qdegen} for $\pi=\pi_{1}$ and $\varpi=\pi_{2}$, obtaining the result immediately in Case (3) and easily in Case (1), and excluding Case (2) by a similar consideration using the braid relations between $\pi_{2}$ and $\pi_{3}$ (with Lemma \ref{Qneq0} and Proposition \ref{sfsigmaf} in case the polynomial $Q_{0}$ corresponding to the latter does not vanish, and the cases in Proposition \ref{degenQ} when it does vanish). This proves the corollary.
\end{proof}

\section{Hecke Algebras and Commuting Operators \label{HecandCom}}

Recall that given constants $\mu$ and $\nu$, the Hecke algebra $\mathcal{H}_{\mu,\nu}$ (of degree $n$) is generated by $\{u_{i}\}_{i=1}^{n-1}$ satisfying the braid relations as well as the equality $u_{i}^{2}=\mu u_{i}+\nu$ for every $1 \leq i \leq n-1$. We wish to show that the polynomial divided difference operators $\{\pi_{i}\}_{i=1}^{n-1}$ from Theorem \ref{main} always generate such an algebra, and see which of the families involving degenerate polynomial divided difference operators also produce a algebra of that sort. In fact, when $n\geq4$ this will be the case for any family, except the one from Proposition \ref{degenQ} in case the polynomials there are not scalars. We will also determine when two families of polynomial divided difference operators that satisfy the braid relations generate commuting algebras.

For this we once again fix the index $i$, denote the variables $x_{i}$ and $x_{i+1}$ by $x$ and $y$ respectively, and write $s$, $\partial$, and $\pi$ for $s_{i}=s_{xy}$, $\partial_{i}=\partial_{xy}$, and $\pi_{i}$ respectively. We work with the first normalization in Proposition \ref{canform}, in which $\pi g=Q_{0}(x,y)\partial g+R_{0}(x,y)g$ for every function $g$ of $x$ and $y$, and we shorten the coefficients to $Q^{0}_{xy}$ and $R^{0}_{xy}$ as above. In addition, let another family $\{\widehat{\pi}_{i}\}_{i=1}^{n-1}$ of polynomial divided difference operators be given, whose $i$th element $\widehat{\pi}:=\widehat{\pi}_{i}$ takes a function $g=g(x,y)$ to $\widehat{Q}^{0}_{xy}\partial g+\widehat{R}^{0}_{xy}g$.

We now carry out the following evaluation.
\begin{lem}
For $\pi$ and $\widehat{\pi}$ as above and a function $f$ of $x$ and $y$ we have \[\pi(\widehat{\pi}f)=\big(Q^{0}_{xy}\partial\widehat{Q}^{0}_{xy}+R^{0}_{xy}\widehat{Q}^{0}_{xy}+Q^{0}_{xy}\widehat{R}^{0}_{yx}\big)\partial f+\big(Q^{0}_{xy}\partial\widehat{R}^{0}_{xy}+R^{0}_{xy}\widehat{R}^{0}_{xy}\big)f.\] \label{prodopers}
\end{lem}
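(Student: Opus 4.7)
The plan is a direct computation. Starting from $\widehat{\pi}f=\widehat{Q}^{0}_{xy}\partial f+\widehat{R}^{0}_{xy}f$, I would apply $\pi$ in its first canonical form, obtaining
\[\pi(\widehat{\pi}f)=Q^{0}_{xy}\partial(\widehat{Q}^{0}_{xy}\partial f+\widehat{R}^{0}_{xy}f)+R^{0}_{xy}(\widehat{Q}^{0}_{xy}\partial f+\widehat{R}^{0}_{xy}f),\]
and then expand the first bracket using the modified Leibniz rule from Lemma \ref{Leibniz}.

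For the term $\partial(\widehat{Q}^{0}_{xy}\partial f)$, the key observation is that $\partial f$ is symmetric in $x$ and $y$ (being the output of a divided difference), so either by the second part of Lemma \ref{Leibniz} or by the first part combined with $\partial(\partial f)=0$, this reduces to $\partial\widehat{Q}^{0}_{xy}\cdot\partial f$. For the term $\partial(\widehat{R}^{0}_{xy}f)$, the general Leibniz rule yields $\partial\widehat{R}^{0}_{xy}\cdot f+s\widehat{R}^{0}_{xy}\cdot\partial f$, and $s\widehat{R}^{0}_{xy}=\widehat{R}^{0}_{yx}$.

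Collecting the resulting four contributions multiplied by $Q^{0}_{xy}$ with the two contributions multiplied by $R^{0}_{xy}$, and grouping the coefficients of $\partial f$ and $f$, gives exactly the asserted formula. There is no real obstacle here: the only step that requires care is the symmetry argument justifying that $\partial$ kills $\partial f$, which is what produces an expression containing only $\partial f$ and $f$ (rather than also higher-order divided differences). This is what makes the output of the composition another operator of the same shape.
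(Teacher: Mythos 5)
Your proposal is correct and follows exactly the paper's own argument: expand $\pi(\widehat{\pi}f)$ using the first canonical form, apply the modified Leibniz rule of Lemma \ref{Leibniz} to the two terms inside $\partial$, use the symmetry of $\partial f$ to kill the would-be second-order term, and collect the coefficients of $\partial f$ and $f$. No gaps.
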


\begin{proof}
Using the definition of $\pi$ and $\widehat{\pi}$ we obtain that \[\pi(\widehat{\pi}f)=Q^{0}_{xy}\partial\big(\widehat{Q}^{0}_{xy}\partial f+\widehat{R}^{0}_{xy}f\big)+R^{0}_{xy}\big(\widehat{Q}^{0}_{xy}\partial f+\widehat{R}^{0}_{xy}f\big).\] We apply Lemma \ref{Leibniz} to the first two terms while recalling that $\partial f$ is symmetric, and when we gather the resulting multipliers of $f$ and of $\partial f$, we obtain the asserted expression. This proves the lemma.
\end{proof}
In particular, the composition of two polynomial divided difference operators acting on the same variables is also a polynomial divided difference operator, whose form in the first normalization from Proposition \ref{canform} is immediately read off of Lemma \ref{prodopers}.

For Hecke algebras we deduce from Lemma \ref{prodopers} the following consequence.
\begin{cor}
A polynomial divided difference operator $\pi$, with polynomials $Q_{0}\neq0$, $R_{0}$, and $T$, satisfies the Hecke condition $\pi^{2}=\mu\pi+\nu\operatorname{Id}$ if and only if the equalities $\partial T_{xy}=\mu$ and $\partial(R^{0}_{xy}T_{yx})+R^{0}_{xy}R^{0}_{yx}=\nu$ hold. \label{forHecke}
\end{cor}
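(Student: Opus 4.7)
The plan is to apply Lemma~\ref{prodopers} with $\widehat{\pi}=\pi$. This expresses $\pi^{2}f$ in the first normalization of Proposition~\ref{canform} as
\[
\pi^{2}f=\bigl(Q^{0}_{xy}\partial Q^{0}_{xy}+R^{0}_{xy}Q^{0}_{xy}+Q^{0}_{xy}R^{0}_{yx}\bigr)\partial f+\bigl(Q^{0}_{xy}\partial R^{0}_{xy}+(R^{0}_{xy})^{2}\bigr)f,
\]
while $\mu\pi f+\nu f=\mu Q^{0}_{xy}\partial f+(\mu R^{0}_{xy}+\nu)f$ is already in that normalization. By the uniqueness asserted in Proposition~\ref{canform}, the Hecke condition is then equivalent to the two coefficient equalities
\[
Q^{0}_{xy}\partial Q^{0}_{xy}+R^{0}_{xy}Q^{0}_{xy}+Q^{0}_{xy}R^{0}_{yx}=\mu Q^{0}_{xy},\qquad Q^{0}_{xy}\partial R^{0}_{xy}+(R^{0}_{xy})^{2}=\mu R^{0}_{xy}+\nu.
\]

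The first equality can be divided through by $Q^{0}_{xy}$ (using $Q_{0}\neq0$ and that we work over a polynomial ring, an integral domain), reducing it to $\partial Q^{0}_{xy}+R^{0}_{xy}+R^{0}_{yx}=\mu$. Now Equation~\eqref{Tpol} with $P=0$ yields $T_{xy}=(x-y)R^{0}_{xy}+Q^{0}_{xy}$, and applying Lemma~\ref{Leibniz} (together with the easy fact $\partial(x-y)=2$ manifestly handled by a direct division) gives $\partial\bigl((x-y)R^{0}_{xy}\bigr)=R^{0}_{xy}+R^{0}_{yx}$. Hence $\partial T_{xy}=\partial Q^{0}_{xy}+R^{0}_{xy}+R^{0}_{yx}$, and the first equality becomes exactly $\partial T_{xy}=\mu$, as claimed.

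Assuming $\partial T_{xy}=\mu$ (so $\partial T_{yx}=-\mu$), I would rewrite the second equation through $T_{yx}$. Lemma~\ref{Leibniz} gives
\[
\partial(R^{0}_{xy}T_{yx})=\partial R^{0}_{xy}\cdot T_{yx}+R^{0}_{yx}\cdot\partial T_{yx}=\partial R^{0}_{xy}\cdot T_{yx}-\mu R^{0}_{yx}.
\]
Substituting $Q^{0}_{xy}=T_{xy}-(x-y)R^{0}_{xy}$ into the second coefficient equality and subtracting the candidate identity $\partial(R^{0}_{xy}T_{yx})+R^{0}_{xy}R^{0}_{yx}=\nu$, the difference simplifies, using $T_{xy}-T_{yx}=(x-y)\mu$ and $R^{0}_{xy}-R^{0}_{yx}=(x-y)\partial R^{0}_{xy}$, to
\[
(\mu-R^{0}_{xy})\bigl[(x-y)\partial R^{0}_{xy}-(R^{0}_{xy}-R^{0}_{yx})\bigr]=0,
\]
making the two forms equivalent. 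This establishes the second condition $\partial(R^{0}_{xy}T_{yx})+R^{0}_{xy}R^{0}_{yx}=\nu$.

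The main obstacle is not conceptual but purely algebraic: the second coefficient equality of $\pi^{2}$ does not match the claimed formula on the nose, and one must reshape it using the first condition (to convert $Q^{0}_{xy}$ into $T_{xy}$ and substitute $\partial T_{xy}=\mu$) and the antisymmetry identities relating $T$ to $sT$ and $R^{0}$ to $sR^{0}$, in order to collect everything into a single factored expression that visibly vanishes. Once that identity is recorded, both directions of the ``if and only if'' follow at once from Lemma~\ref{prodopers} and the canonicity of the first normalization.
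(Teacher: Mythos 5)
Your proposal is correct and follows essentially the same route as the paper: take $\widehat{\pi}=\pi$ in Lemma \ref{prodopers}, match the coefficients of $\partial f$ and $f$, cancel $Q^{0}_{xy}\neq0$ to turn the first equality into $\partial T_{xy}=\mu$, and convert the second into the asserted formula via $Q^{0}_{xy}=T_{xy}-(x-y)R^{0}_{xy}$, $(x-y)\partial R^{0}_{xy}=R^{0}_{xy}-R^{0}_{yx}$, $\mu=-\partial T_{yx}$, and Lemma \ref{Leibniz}. The only cosmetic difference is that you verify the equivalence of the two forms of the $\nu$-condition by showing their difference vanishes identically, whereas the paper transforms one expression directly into the other.
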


\begin{proof}
By taking $\widehat{\pi}=\pi$ in Lemma \ref{prodopers}, the multiplier of $\partial f$ in $\pi^{2}f$ becomes $Q^{0}_{xy}$ times $\partial Q^{0}_{xy}+R^{0}_{xy}+R^{0}_{yx}$, and as $T_{xy}=Q^{0}_{xy}+(x-y)R^{0}_{xy}$ by Equation \eqref{Tpol} (as in Remark \ref{degnondeg}), a simple calculation using Lemma \ref{Leibniz} shows that the latter combination is just $\partial T_{xy}$. Since for $\pi^{2}$ and $\mu\pi+\nu$ to coincide for every $f$ we need the coefficients of $\partial f$ and $f$ in them to be the same (as in, e.g., the last assertion in Lemma \ref{expansions}), the comparison of those of $\partial f$ and the assumption that $Q^{0}_{xy}\neq0$ yield the first equality.

From the coefficients of $f$ in $\pi^{2}f-\mu\pi f=\nu f$ we obtain that $\nu$ must equals the combination $Q^{0}_{xy}\partial R^{0}_{xy}+R^{0}_{xy}(R^{0}_{xy}-\mu)$. We express $Q^{0}_{xy}$ using $T_{xy}$ as above, recall that $(x-y)\partial R^{0}_{xy}=R^{0}_{xy}-R^{0}_{yx}$, cancel $(R^{0}_{xy})^{2}$, write $\mu$ as $-\partial T_{yx}$ by the previous equality, and apply Lemma \ref{Leibniz} to get that the expression that equals $\nu$ is the asserted one. Since our argument goes also in the opposite direction, the other implication is established as well. This proves the corollary.
\end{proof}

We can now establish the quadratic relation required for the Hecke property.
\begin{prop}
The operators $\{\pi_{i}\}_{i=1}^{n-1}$ from Theorem \ref{main} generate a Hecke algebra $\mathcal{H}_{b-c,\nu}$, where $\nu$ equals $e(e+c-b)$ in Case (1) and 0 in Case (2). \label{Hecke}
\end{prop}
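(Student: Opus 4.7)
The braid relations hold by Proposition \ref{quadcom} and Theorem \ref{main}, so the only content of the proposition is the quadratic relation $\pi_i^2=(b-c)\pi_i+\nu\operatorname{Id}$ for each $i$. I plan to apply Corollary \ref{forHecke}, which reduces this to checking the two identities $\partial T_{xy}=\mu$ and $\partial(R^0_{xy}T_{yx})+R^0_{xy}R^0_{yx}=\nu$ on the canonical polynomials $T$ and $R_0$ listed in Theorem \ref{main} and Remark \ref{otherforms}.

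First I would dispose of $\mu$ uniformly. By Proposition \ref{sfsigmaf} the polynomial $T(x,y)=axy+bx+cy+d$ is the same in both cases, and a one-line computation gives $T(x,y)-T(y,x)=(b-c)(x-y)$, hence $\partial T_{xy}=b-c$. This identifies $\mu=b-c$ in all cases.

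Next I treat the computation of $\nu$. In Case~(1), Remark \ref{otherforms} gives $R_0=b-c-e$, a scalar. Since $R^0_{xy}=R^0_{yx}=b-c-e$ and $\partial T_{yx}=-(b-c)$, the required expression equals $(b-c-e)\bigl[(c-b)+(b-c-e)\bigr]=-e(b-c-e)=e(e+c-b)$, which is the claimed value. In Case~(2), I would verify the four sub-cases separately using the values of $R_0$ from Remark \ref{otherforms}, namely $b-c$, $0$, $ax_{i+1}+b$, and $-ax_i-c$. The first two are constants and the computation is immediate (using $\partial T_{yx}=c-b$). For the last two, I apply the Leibniz rule (Lemma \ref{Leibniz}) with $f=R^0_{xy}$ and $g=T_{yx}$: a direct expansion gives $\partial(R^0_{xy}T_{yx})=-R^0_{xy}R^0_{yx}$ after the constant term $bc-ad$ is killed by the hypothesis $ad=bc$. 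Hence the sum is zero, so $\nu=0$ uniformly throughout Case~(2).

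There is no real obstacle; the only mild subtlety is that the relation $ad=bc$ from Theorem \ref{main} is essential for the cancellation in the two non-trivial sub-cases of Case~(2), and one has to remember that $T$ is already the joint polynomial of all the operators (by Lemma \ref{Qneq0}), which justifies using the single symbol $\mu$ across all $i$. Once the two identities of Corollary \ref{forHecke} are verified, the Hecke quadratic relation holds at each $i$, and combined with the braid relations this exhibits $\{\pi_i\}_{i=1}^{n-1}$ as generators of $\mathcal{H}_{b-c,\nu}$ with the asserted value of $\nu$.
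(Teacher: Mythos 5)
Your proposal is correct and follows essentially the same route as the paper: both reduce the quadratic relation to the two identities of Corollary \ref{forHecke}, obtain $\partial T_{xy}=b-c$ from the form of $T$, and then evaluate $\partial(R^{0}_{xy}T_{yx})+R^{0}_{xy}R^{0}_{yx}$ case by case (the paper cites Lemma \ref{restonQR} and Remark \ref{valofnu} for the value $\nu=d\alpha+e(e+c-b)$, where you instead recompute it directly from the explicit polynomials of Remark \ref{otherforms}, with the same cancellation via $ad=bc$). Your computations check out, including the two non-constant sub-cases where $\partial(R^{0}_{xy}T_{yx})=-R^{0}_{xy}R^{0}_{yx}$.
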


\begin{proof}
We need to show that each operator $\pi=\pi_{i}$ from that theorem satisfies $\pi^{2}=(b-c)\pi+\nu\operatorname{Id}$ for the asserted value of $\nu$. The proof of Proposition \ref{sfsigmaf} shows that the polynomial $T$ satisfies $\partial T=b-c$ as Corollary \ref{forHecke} requires, and the proof of Proposition \ref{lastcoeff} establishes the second equality there, where Lemma \ref{restonQR} and Remark \ref{valofnu} establish the value of $\nu$ to be the desired one. This proves the proposition.
\end{proof}
One can write the Hecke equation for every $\pi=\pi_{i}$ in Proposition \ref{Hecke} as $(\pi-e\operatorname{Id})\big(\pi-(b-c-e)\operatorname{Id}\big)=0$. When $e$ equals neither 0 nor $b-c$, this expresses each such $\pi$ as an invertible operator, with $\pi^{-1}=\frac{\pi-(b-c)\operatorname{Id}}{e(e-b+c)}$.

It is clear that for $\mu\neq0$, the Hecke relation $\pi^{2}=\mu\pi$ (with $\nu=0$) is equivalent to $\pi$ and $\mu\operatorname{Id}$ satisfying the cubic braid relation. We can thus deduce from Corollary \ref{ngeq4sc} the general form of a family of polynomial divided difference operators that satisfy the braid relations and where at least one of the $Q_{0}$-polynomials vanish. Just like Proposition \ref{Hecke} shows that those from Theorem \ref{main} always produce Hecke algebras, we will get that when $n\geq4$, families with some vanishing $Q_{0}$'s also do the same.

Let $I$ be a subset of the integers between 1 and $n-1$, and take $i \in I$. We will say that $i$ is \emph{isolated} if neither $i-1$ nor $i+1$ lie in $I$, where for $i=1$ and for $i=n-1$, only the neighbor 2 or $n-2$ has to be considered. The elements of $I$ that are not isolated form a disjoint union of separated sets of consecutive integers, which we will call the \emph{intervals} of $I$. In particular, an interval of $I$ is not an isolated point, and is maximal in the sense that it cannot be extended to a larger set of consecutive integers that are all contained in $I$. With this terminology, we can now obtain the following result.
\begin{thm}
For $n\geq4$, let $\{\pi_{i}\}_{i=1}^{n-1}$ be a family of non-vanishing polynomial divided difference operators, in which we let $I$ be the set of indices $1 \leq i \leq n-1$ for which $\pi_{i}$ is not a scalar multiple of $\operatorname{Id}$. Then this family satisfies the braid relations and contains at least one vanishing $Q_{0}$-polynomial precisely when the following conditions hold:
\begin{enumerate}[(1)]
\item The complement of $I$ is not empty, and for every $i$ in that complement we have $\pi_{i}=\mu\operatorname{Id}$ for the same non-zero constant $\mu$;
\item For any isolated element of $I$ we have $\pi_{i}f=\varphi_{i}\partial_{i}(\psi_{i}f)$, where $\varphi_{i}$ and $\psi_{i}$ are functions of $x_{i}$ and $x_{i+1}$ that satisfy $\partial_{i}(\varphi_{i}\psi_{i})=\mu$;
\item The restriction of $\{\pi_{i}\}_{i=1}^{n-1}$ to an interval of $I$ yields a family like in Case (2) of Theorem \ref{main}, in which the parameters $a$, $b$, $c$, and $d$ also satisfy $b-c=\mu$.
\end{enumerate}
Moreover, every such family generates a Hecke algebra of the form $\mathcal{H}_{\mu,0}$. \label{withvanQ0}
\end{thm}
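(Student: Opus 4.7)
The plan is to classify each index $i$ by whether it lies in $I^c$ (where $\pi_i$ is scalar), is isolated in $I$, or sits inside an interval of $I$, and then to match this trichotomy with the structural results already available. The useful translation, obtained by a single application of Lemma \ref{Leibniz}, is that an operator of the form $f\mapsto\varphi\partial_i(\psi f)$ has canonical polynomials $Q_0=\varphi s_i\psi$, $R_0=\varphi\partial_i\psi$ and $T=\varphi\psi$, so the condition $\partial_i(\varphi\psi)=\mu$ appearing in Corollary \ref{ngeq4sc} is just $\partial_iT=\mu$.

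For the forward direction, assume some $Q_{0,i}$ vanishes. Corollary \ref{ngeq4sc} identifies $I^c$ as exactly the set of indices with $Q_{0,i}=0$, each producing $\pi_i=\mu_i\operatorname{Id}$ with $\mu_i\ne 0$, so $I^c\neq\emptyset$. The global constancy of $\mu$ required for Condition (1) is obtained by propagating between neighbors: adjacent scalars must carry the same $\mu$ by Corollary \ref{ngeq4sc}; a non-scalar $\pi_j$ with a scalar neighbor $\pi_{j\pm 1}=\mu_{j\pm 1}\operatorname{Id}$ is, again by the same corollary, of the form $\varphi\partial_j(\psi\,\cdot\,)$ with $\partial_jT_j=\mu_{j\pm 1}$, and since $T_j$ is canonical (Corollary \ref{Tcan}) the scalars on the two sides of such a non-scalar element must agree. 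For an isolated $i\in I$ both neighbors are scalars, and Corollary \ref{ngeq4sc} applied to either of them directly delivers Condition (2).

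For Condition (3), I restrict the family to a fixed interval of $I$. Since a scalar $\mu\operatorname{Id}$ in the family has non-vanishing $T=(x-y)\mu$, Proposition \ref{degenT} precludes vanishing $T$ anywhere in the family, so the restriction is a family of non-degenerate polynomial divided difference operators satisfying the braid relations and Theorem \ref{main} applies. The cubic braid between an interval endpoint $\pi_j$ and an adjacent scalar $\mu\operatorname{Id}$ collapses to $\pi_j^2=\mu\pi_j$, forcing the Hecke parameters of the restriction to be $(\mu,0)$; by Proposition \ref{Hecke} the Case (1) family has $\nu=e(e+c-b)\ne 0$, so we must be in Case (2), and reading off $\partial T=b-c=\mu$ completes Condition (3). (An interval of $I$ without any scalar neighbor would cover all of $\{1,\ldots,n-1\}$, contradicting $I^c\neq\emptyset$, so every interval admits such a comparison.)

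For the converse, Proposition \ref{quadcom} handles the quadratic relations, so only the cubic braid $\pi_i\pi_{i+1}\pi_i=\pi_{i+1}\pi_i\pi_{i+1}$ on consecutive indices needs verification. When both indices lie in $I^c$ both sides reduce to $\mu^3\operatorname{Id}$; when both belong to the same interval of $I$ the identity is Theorem \ref{main}(2); and when exactly one of the pair is scalar the identity reduces to $\pi_j^2=\mu\pi_j$ for the non-scalar $\pi_j$, which is Proposition \ref{Hecke} in the interval case and in the isolated case follows from the one-line calculation $\pi_j^2f=\varphi\partial_j(\psi\varphi\partial_j(\psi f))=\varphi\cdot\partial_j(\varphi\psi)\cdot\partial_j(\psi f)=\mu\pi_jf$ using $\partial_j^2=0$ and Lemma \ref{Leibniz}. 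The Hecke algebra statement is an immediate consequence of the same case analysis: every $\pi_i$ satisfies $\pi_i^2=\mu\pi_i$, which together with the braid relations is the defining presentation of $\mathcal{H}_{\mu,0}$. The main obstacle is the global constancy of $\mu$ when intervals, isolated elements and scalars interleave freely, and it is handled uniformly by the identity $\partial T=\mu$ at each non-scalar operator adjacent to a scalar together with the interval-wide constancy $\partial T=b-c$ from Theorem \ref{main}(2).
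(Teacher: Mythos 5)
Your proposal is correct and follows essentially the same route as the paper's proof: Corollary \ref{ngeq4sc} to place all vanishing-$Q_{0}$ indices in $I^{c}$ and describe their neighbors, Proposition \ref{degenT} to rule out vanishing $T$, Theorem \ref{main} plus the forced Hecke parameter $\nu=0$ to pin down Case (2) with $b-c=\mu$ on each interval, and the equivalence of the cubic braid relation with a scalar neighbor to $\pi_{j}^{2}=\mu\pi_{j}$ for the converse and the Hecke statement. The only cosmetic deviations are that you verify $\pi_{j}^{2}=\mu\pi_{j}$ for isolated operators by a direct Leibniz computation rather than citing Proposition \ref{degenQ}, and that you make the propagation of $\mu$ explicit via $\partial_{i}T_{i}=\mu$ with $T_{i}=\varphi_{i}\psi_{i}$; both are sound and match the paper's reasoning in substance.
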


\begin{proof}
The quadratic braid relations are clear via Proposition \ref{quadcom}, and we focus on the cubic ones. It is clear that non-zero two multiples, say $\mu\operatorname{Id}$ and $\lambda\operatorname{Id}$, satisfy the cubic braid relation if and only if $\lambda=\mu$ (indeed, the scalar multiples $\lambda\mu^{2}$ and $\lambda^{2}\mu$ have to be the same). Case (3) in Proposition \ref{degenQ} and Remark \ref{Qdegen} shows that the operators from our Condition (2) satisfy the cubic braid relations with their neighbors from the complement of $I$, and Theorem \ref{main} implies the cubic braid relation among any consecutive operators that lie inside an interval of $I$. Moreover, as Condition (1) implies that no interval is the full set, each one must involve at least one operator at an end of an interval, and this operator has to satisfy the cubic braid relation with its neighbor $\mu\operatorname{Id}$. But Proposition \ref{Hecke} shows that it satisfies the Hecke condition $\pi_{i}^{2}=\mu\pi_{i}$, and we saw that this is equivalent to the desired braid relation. We have thus obtained that every such family satisfies the braid relations, and using the same equivalence we also obtain the generation of a Hecke algebra $\mathcal{H}_{\mu,0}$.

Conversely, Corollary \ref{ngeq4sc} shows that since $n\geq4$, all the operators that have a vanishing $Q_{0}$-polynomial are in the complement of $I$, so that this complement is not empty. We also saw that neighboring operators in the complement of $I$ are the same scalar multiple of $\operatorname{Id}$. This corollary produces Condition (2) as well, and the fact that we can extract the value of the scalar from $\varphi_{i}$ and $\psi_{i}$ implies that the scalar multiples of $\operatorname{Id}$ on both sides of an isolated element of $I$ are the same. Finally, no $T$-polynomial can vanish (by Proposition \ref{degenT}), so the form of the restriction of the family to every interval of $I$ is given by Theorem \ref{main}. But Proposition \ref{Hecke} then produces a Hecke condition for such a restriction, and we saw that this restriction has to satisfy a Hecke condition with $\nu=0$ for the cubic braid relation with a neighbor from the complement of $I$. Thus the restriction is as in Case (2) of that theorem, and again the value of the scalar multiples of $\operatorname{Id}$ on both sides are determined by the Hecke condition from that proposition, namely $b-c$ on both sides of the interval. Hence Conditions (3) and (1) are also established. This proves the theorem.
\end{proof}
Note that in Theorem \ref{withvanQ0}, the braid relation between an operator as in Case (2) of Theorem \ref{main} and $\mu\operatorname{Id}$ implies that the former operators have to be of the form $f\mapsto\varphi\partial(\psi f)$, or equivalently $f\mapsto\varphi\cdot(s\psi\partial f+\partial\psi \cdot f)$, for functions $\varphi$ and $\psi$. Indeed, the polynomial $T$ is either linear univariate (including a constant) if $a=0$, or the product of two strict linear polynomials, one in each variable, otherwise. Then the second line in Case (2) of Remark \ref{otherforms} is with $\varphi=T$ and $\psi=1$, the first one is obtained by taking $\varphi=1$ and $\psi=T$, and when $a\neq0$ (otherwise we covered all options, by Remark \ref{lesselts}) in the third line $\varphi$ is the polynomial in $x_{i+1}$ and $\psi$ is the one in $x_{i}$, while in the fourth one we interchange their roles.

We conclude the discussion about Hecke algebras by considering the remaining cases from Proposition \ref{degenQ} and Remark \ref{Qdegen} for $n=3$, as well as the families from Proposition \ref{degenT}. When the operator multiplies any function $f$ by a non-scalar polynomial, it does not satisfy any Hecke relation, as the only polynomials $r$ that satisfy $r^{2}=\mu r+\nu$ are scalars. It follows that for $n=3$ the families of non-vanishing polynomial divided difference operators, with non-vanishing $T$-polynomials, that satisfy the braid relation and the Hecke condition are those from Theorem \ref{main} and from the extension of Theorem \ref{withvanQ0} to $n=3$.

Similarly, the only Hecke relation that an operator $\pi$ taking $f$ to a polynomial times $s_{i}f$ is one of the form $\pi^{2}=\nu$ (the argument at the end of the proof of Lemma \ref{expansions} explains this). This again happens if and only if the polynomial is a scalar $\lambda$, and then $\nu=\lambda^{2}$. Hence the operators from Proposition \ref{degenT} satisfy the Hecke condition if and only if the polynomials $R_{i}$ there are scalars (and then they are the same scalar, by being almost equal). Note that the almost equality condition implies that for the operators from that proposition we have $\pi_{i}^{2}f=R_{i}(x_{i},x_{i+1})R_{i}(x_{i+1},x_{i})f$, and the multiplier is the value on $x_{i}$ and $x_{i+1}$ of the same symmetric polynomial for all $i$.

\medskip

For commutativity, we draw from Lemma \ref{prodopers} the following consequence.
\begin{cor}
The operators $\pi$ and $\widehat{\pi}$ from Lemma \ref{prodopers} commute precisely when the equalities $Q^{0}_{xy}\partial\widehat{Q}^{0}_{xy}=\widehat{Q}^{0}_{xy}\partial Q^{0}_{xy}$ and $Q^{0}_{xy}\partial\widehat{R}^{0}_{xy}=\widehat{Q}^{0}_{xy}\partial R^{0}_{xy}$ hold. \label{comindexi}
\end{cor}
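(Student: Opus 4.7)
The plan is to apply Lemma \ref{prodopers} twice---once as written for $\pi(\widehat{\pi}f)$, and once with the roles of $\pi$ and $\widehat{\pi}$ interchanged to get $\widehat{\pi}(\pi f)$---and then compare coefficients. Since both expressions produced by that lemma are of the form $A(x,y)\partial f+B(x,y)f$ with $A$ and $B$ depending only on $x$ and $y$, and since plugging $f=1$ and $f=x$ (for which $\partial f$ equals $0$ and $1$ respectively) recovers $B$ and then $A$, the operator identity $\pi\widehat{\pi}=\widehat{\pi}\pi$ is equivalent to the separate matching of the coefficients of $f$ and of $\partial f$ in these two expressions.

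The coefficient of $f$ in $\pi\widehat{\pi}f-\widehat{\pi}\pi f$ is, directly from Lemma \ref{prodopers}, the expression $Q^{0}_{xy}\partial\widehat{R}^{0}_{xy}+R^{0}_{xy}\widehat{R}^{0}_{xy}-\widehat{Q}^{0}_{xy}\partial R^{0}_{xy}-\widehat{R}^{0}_{xy}R^{0}_{xy}$, in which the two products $R^{0}_{xy}\widehat{R}^{0}_{xy}$ cancel, leaving precisely the second asserted equality. So the $f$-coefficient vanishes if and only if the equality $Q^{0}_{xy}\partial\widehat{R}^{0}_{xy}=\widehat{Q}^{0}_{xy}\partial R^{0}_{xy}$ holds.

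For the coefficient of $\partial f$, gathering the terms gives the expression $(Q^{0}_{xy}\partial\widehat{Q}^{0}_{xy}-\widehat{Q}^{0}_{xy}\partial Q^{0}_{xy})+\widehat{Q}^{0}_{xy}(R^{0}_{xy}-R^{0}_{yx})-Q^{0}_{xy}(\widehat{R}^{0}_{xy}-\widehat{R}^{0}_{yx})$. I will then invoke the identity $R^{0}_{xy}-R^{0}_{yx}=(x-y)\partial R^{0}_{xy}$, which is immediate from the definition of $\partial$, together with its analogue for $\widehat{R}^{0}$, to rewrite this difference as $(Q^{0}_{xy}\partial\widehat{Q}^{0}_{xy}-\widehat{Q}^{0}_{xy}\partial Q^{0}_{xy})+(x-y)(\widehat{Q}^{0}_{xy}\partial R^{0}_{xy}-Q^{0}_{xy}\partial\widehat{R}^{0}_{xy})$. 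Under the second asserted equality the second parenthesis vanishes, so this coefficient vanishes precisely when the first asserted equality also holds; conversely, if both asserted equalities hold, the above manipulations show that both coefficients vanish and hence $\pi\widehat{\pi}=\widehat{\pi}\pi$.

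There is no real obstacle here beyond the algebraic bookkeeping; the only step worth flagging is the use of the $(x-y)\partial$-identity that eliminates the $R$-related mixed terms from the $\partial f$-coefficient, which is exactly what couples the two asserted equalities together and ensures that neither alone need imply commutativity.
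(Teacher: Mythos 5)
Your proposal is correct and follows essentially the same route as the paper: compare the two expressions from Lemma \ref{prodopers} coefficientwise, read off the second equality from the $f$-coefficients after cancelling $R^{0}_{xy}\widehat{R}^{0}_{xy}$, and use $R^{0}_{xy}-R^{0}_{yx}=(x-y)\partial R^{0}_{xy}$ to reduce the $\partial f$-coefficient difference to the first equality modulo the second. The only cosmetic difference is that you justify the coefficient-matching by evaluating at $f=1$ and $f=x$ rather than citing the argument of Lemma \ref{expansions}, which is equally valid.
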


\begin{proof}
We compare the expression for $\pi(\widehat{\pi}f)$ from that lemma with that of $\widehat{\pi}(\pi f)$ obtained by interchanging $Q^{0}_{xy}$ with $\widehat{Q}^{0}_{xy}$ and $R^{0}_{xy}$ with $\widehat{R}^{0}_{xy}$, and note that the term $R^{0}_{xy}\widehat{R}^{0}_{xy}f$ is the same on both sides. As in Lemma \ref{expansions}, the operators commute if and only if the coefficients of $f$ and $\partial f$ are the same in both compositions, and the second assert equality is that of the remaining coefficients of $f$ on the two sides. Now, when we consider the difference between the coefficients of $\partial f$, the parts involving $R_{0}$ and $\widehat{R}_{0}$ reduce to the difference between $Q^{0}_{xy}\partial\widehat{R}^{0}_{xy}$ and $\widehat{Q}^{0}_{xy}\partial R^{0}_{xy}$. Hence, given the second equality, the coefficients of $\partial f$ are the same if and only if the first equality holds as well. This proves the corollary.
\end{proof}

We can now establish the commutativity relations between two polynomial divided difference operators $\pi=\pi_{i}$ and $\widehat{\pi}=\widehat{\pi}_{i}$ with the same index $i$, when both operators are of the type considered in Theorem \ref{main} and Remark \ref{otherforms}. Since we now take one operator in each family, it will be convenient to gather Case (1) with the first two lines in Case (2) to obtain the \emph{extended Case (1)} (in which the value of $e$ is simply unrestricted). Moreover, Remark \ref{lesselts} shows that the remaining two lines in Case (2) are special cases of the extended Case (1) when $a=0$, so that we refer as the \emph{strict Case (2)} to the operators obtained by the last two lines in Case (2) under the assumption that $a\neq0$.
\begin{prop}
Let $\pi$ be as in Theorem \ref{main} and Remark \ref{otherforms}, and let $\widehat{\pi}$ be another such operator, that is based on the coefficients $\widehat{a}$, $\widehat{b}$, $\widehat{c}$, and $\widehat{d}$, and also $\widehat{e}$ in the extended Case (1). Assuming that $\pi$ and $\widehat{\pi}$ are not scalar multiples of one another, they commute if and only if their description as in Proposition \ref{canform} with $Q_{0}$, $R_{0}$, $\widehat{Q}_{0}$, and $\widehat{R}_{0}$ is in one of the following situations.
\begin{enumerate}[(1)]
\item $\pi$ and $\widehat{\pi}$ are operators from the extended Case (1) that are not scalar multiples of one another, and we have the equalities $c-b+2e=\widehat{c}-\widehat{b}+2\widehat{e}=0$.
\item There are constants $a$, $b$, $c$, $d$, and $e$ with $ad=bc$, $b \neq c$, and $b-c\neq2e$ such that $\pi f=\big(axy+(c+e)x+(b-e)x+d\big)\partial f+(b-c-e)f$ and $\widehat{\pi}f$ is a scalar multiple of $\big(axy+(c+e)x+(b-e)x+d\big)\partial f-ef$.
\item $\pi f=\big(ax^{2}+(b+c)x+d\big)\partial f-(ax+c)f$ for constants $a\neq0$, $b \neq c$, and $d$ with $ad=bc$, and $\widehat{\pi}f$ is a scalar multiple of $\big(ax^{2}+(b+c)x+d\big)\partial f-(ax+b)f$.
\item $\pi f=\big(ay^{2}+(b+c)y+d\big)\partial f+(ay+b)f$ for $a$, $b$, $c$, and $d$ as in Case (3), and $\widehat{\pi}f$ is a scalar multiple of $\big(ay^{2}+(b+c)y+d\big)\partial f+(ay+c)f$.
\end{enumerate} \label{sameindex}
\end{prop}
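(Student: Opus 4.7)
The plan is to start from Corollary \ref{comindexi}, which reduces the commutation of $\pi$ and $\widehat{\pi}$ to two polynomial identities. After clearing denominators the first one becomes $Q^{0}_{xy}\widehat{Q}^{0}_{yx}=\widehat{Q}^{0}_{xy}Q^{0}_{yx}$, and the second one involves the anti-symmetric parts of $R_{0}$ and $\widehat{R}_{0}$. I will substitute the explicit formulas from Theorem \ref{main} and Remark \ref{otherforms}, splitting into combinations according to whether each of $\pi$ and $\widehat{\pi}$ comes from the \emph{extended Case (1)} (meaning Case (1) of Theorem \ref{main} together with the first two lines of its Case (2), where $Q_{0}$ is at most bilinear and $R_{0}$ is a constant) or from the \emph{strict Case (2)} (the last two lines under $a\neq0$, where $Q_{0}$ is quadratic and univariate in $x_{i}$ or $x_{i+1}$).

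When both operators lie in the extended Case (1), a direct computation gives $Q_{0}(x,y)-Q_{0}(y,x)=(c+2e-b)(x-y)$, so the first commutation identity simplifies to $(c+2e-b)\widehat{Q}_{0}=(\widehat{c}+2\widehat{e}-\widehat{b})Q_{0}$, while the second is automatic because $R_{0}$ and $\widehat{R}_{0}$ are constants. If $c+2e-b=0$, non-degeneracy of $Q_{0}$ forces $\widehat{c}+2\widehat{e}-\widehat{b}=0$ as well, yielding Case (1) of the proposition. Otherwise the identity forces $\widehat{Q}_{0}=\lambda Q_{0}$ for a non-zero scalar $\lambda$, so that $\widehat{\pi}=\lambda\pi+\eta\operatorname{Id}$ for some $\eta$. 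Imposing on $\widehat{\pi}$ the defining relation $\widehat{a}\widehat{d}=\widehat{b}\widehat{c}$ of the extended parametrization leaves, apart from $\eta=0$ (the excluded scalar-multiple case), only the single option $\eta=-\lambda(b-c)$, which requires $b\neq c$ and, together with the non-vanishing $c+2e-b\neq 0$, reproduces Case (2).

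For the mixed combinations, where one operator lies in the extended Case (1) and the other in the strict Case (2), I will compare $x$-degrees in the identity $Q^{0}_{xy}\widehat{Q}^{0}_{yx}=\widehat{Q}^{0}_{xy}Q^{0}_{yx}$. The univariate quadratic $Q_{0}$ on one side creates a degree imbalance that kills in succession all the bilinear coefficients on the other side, ultimately making the extended $Q_{0}$ vanish and contradicting non-degeneracy; hence these combinations cannot occur. When both operators lie in the strict Case (2), the same degree inspection rules out pairing a line-3 operator with a line-4 one (since their $Q_{0}$ polynomials are univariate in different variables), and for matching lines the first condition forces $\widehat{Q}_{0}=\lambda Q_{0}$. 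Combining this with $\widehat{a}\widehat{d}=\widehat{b}\widehat{c}$ yields $\{\widehat{b},\widehat{c}\}=\{\lambda b,\lambda c\}$, and the swapped option, which gives a non-scalar multiple precisely when $b\neq c$, together with a direct check of the second commutation identity, reproduces Cases (3) and (4). The converse direction in all four cases amounts to substitution into Corollary \ref{comindexi}.

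The main obstacle will be the ruling out of the mixed combinations, where the argument must track enough monomials in the expansion $Q^{0}_{xy}\widehat{Q}^{0}_{yx}-\widehat{Q}^{0}_{xy}Q^{0}_{yx}$ to eliminate all the bilinear coefficients on the extended side without leaving a loophole. A secondary subtlety is to verify that the conditions ``$b\neq c$'' and ``$b-c\neq 2e$'' in Case (2) correspond respectively to $\widehat{\pi}\neq\lambda\pi$ and to being outside of Case (1), and similarly that ``$b\neq c$'' in Cases (3) and (4) is exactly the non-scalar-multiple condition after the parameter swap.
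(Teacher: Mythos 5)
Your proof is correct and follows the same skeleton as the paper's: both reduce commutation to the two identities of Corollary \ref{comindexi}, split according to whether each operator lies in the extended Case (1) or the strict Case (2), deduce that the two $Q_{0}$-polynomials must be proportional, and then use $ad=bc$ to see that the only option besides a scalar multiple is the swap of $b$ and $c$ (possible non-trivially only when $b \neq c$), which yields Cases (2)--(4). The one place you genuinely diverge is the step you flag as the main obstacle, namely excluding the mixed combinations: you chase $x$-degrees in $Q^{0}_{xy}\widehat{Q}^{0}_{yx}=\widehat{Q}^{0}_{xy}Q^{0}_{yx}$, and this does go through (the $x^{3}$-coefficient forces $a=0$ and kills one linear coefficient of the extended $Q_{0}$, and the $x^{2}$-coefficient then kills the remaining ones, contradicting $Q_{0}\neq0$), but the paper gets the same conclusion for free from the \emph{second} identity of Corollary \ref{comindexi}: $\partial R_{0}$ vanishes in the extended Case (1) and equals the non-zero constant $-a$ in the strict Case (2), so $Q^{0}_{xy}\partial\widehat{R}^{0}_{xy}=\widehat{Q}^{0}_{xy}\partial R^{0}_{xy}$, with neither $Q_{0}$ vanishing, immediately forces both operators into the same class (and, in the strict case, even gives $\widehat{a}Q_{0}=a\widehat{Q}_{0}$ directly). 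Apart from that, your finish in the extended case via $\widehat{\pi}=\lambda\pi+\eta\operatorname{Id}$ and the two roots of $t^{2}-\lambda(b+c)t+\lambda^{2}bc$ is a correct repackaging of the paper's identification $\widehat{b}=c$, $\widehat{c}=b$, $\widehat{e}=c-b+e$.
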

We have once again adopted the notation $x=x_{i}$ and $y=x_{i+1}$ as above.

\begin{proof}
It is clear from Remarks \ref{otherforms} and \ref{lesselts} and our definition that $\partial R_{0}$ vanishes when $\pi_{i}$ is in the extended Case (1) and equals a non-zero constant in the strict Case (2), and the same for $\partial\widehat{R}_{0}$ and $\widehat{\pi}$. Since $Q_{0}$ and $\widehat{Q}_{0}$ do not vanish in Theorem \ref{main}, the second equality in Corollary \ref{comindexi} implies that $\partial R_{0}\neq0$ if and only if $\partial\widehat{R}_{0}\neq0$, so that $\pi_{i}$ and $\widehat{\pi}$ are either both in the extended Case (1) or both in the strict Case (2). Consider first the latter case, and since we work up to scalar multiplication, we may assume that $\widehat{a}=a$, and then the equality in question implies that $\widehat{Q}_{0}=Q_{0}$, from which the first equality in Corollary \ref{comindexi} immediately follows. Thus both are of the same type (either a quadratic polynomial in $x_{i}$ or one in $x_{i+1}$), and we also have $\widehat{d}=d$ and $\widehat{b}+\widehat{c}=b+c$. But since the equalities $ad=bc$ and $\widehat{a}\widehat{d}=\widehat{b}\widehat{c}$ imply that $\widehat{b}\widehat{c}=bc$ as well, and we assume that $\pi$ and $\widehat{\pi}$ are not scalar multiples of one another, we must have $\widehat{b}=c$, $\widehat{c}=b$, and $b \neq c$. This produces the asserted Cases (3) and (4), according to the type of $Q_{0}$ and $\widehat{Q}_{0}$.

We thus now assume that $\pi_{i}$ and $\widehat{\pi}$ are both as in the extended Case (1), and we only have to consider the first equality in Corollary \ref{comindexi}. Once again both sides vanish simultaneously, with $Q_{0}$ and $\widehat{Q}_{0}$ non-zero, and we recall that $\partial Q_{0}$ and $\partial\widehat{Q}_{0}$ are now scalars. If the scalars are non-zero then $Q_{0}$ and $\widehat{Q}_{0}$ are again scalar multiples of one another, and without loss of generality we can once again assume that they are equal. In this case $\widehat{a}=a$ and $\widehat{d}=d$ once more, and we have $\widehat{c}+\widehat{e}=c+e$ and $\widehat{b}-\widehat{e}=b-e$, as well as $\widehat{b}\widehat{c}=bc$ as before. Since $\pi$ and $\widehat{\pi}$ are not scalar multiples of one another, this is only possible when again $\widehat{b}=c$ and $\widehat{c}=b$ with $b \neq c$, and $\widehat{e}=c-b+e$. This yields the operators described in the asserted Case (2).

The remaining situation is where $\pi_{i}$ and $\widehat{\pi}$ are in the extended Case (1) and the scalars $\partial Q_{0}$ and $\partial\widehat{Q}_{0}$ vanish. Then the operators commute by Corollary \ref{comindexi}. As the scalars in question are $c-b+2e$ and $\widehat{c}-\widehat{b}+2\widehat{e}=0$, this is the situation considered in the asserted Case (1). As all of our arguments are invertible, the operators in all cases indeed commute (and are not scalar multiples of one another). This completes the proof of the proposition.
\end{proof}
Note that in the three last cases in Proposition \ref{comindexi}, the operators from Theorem \ref{main} and Remark \ref{otherforms} that commute with $\pi$ are the multiples of $\pi$ and those of $\pi-(b-c)\operatorname{Id}$ (hence just those of $\pi$ when $b=c$), and the latter is a multiple of the inverse of $\pi$ when $\pi$ is invertible via Proposition \ref{Hecke}. The operators from the first case in Proposition \ref{comindexi}, which thus all commute with one another, are invertible in this sense when $b \neq c$ (since then $e$ equals neither 0 nor $b-c$), and not otherwise. The latter are also precisely the operators in the extended Case (1) whose $Q_{0}$-polynomial is symmetric (as was seen in the proof). Note that in characteristic 2, Case (1) of Proposition \ref{comindexi} occurs for every $e$ and every $\widehat{e}$ in the extended Case (1) when $b=c$ and $\widehat{b}=\widehat{c}$, and for none in case the latter equalities do not hold.

The operators associated with intervals in Theorem \ref{withvanQ0} are special cases of those from Theorem \ref{main} and Remark \ref{otherforms}, hence their commutation relation is already covered in Proposition \ref{comindexi}. When considering the more general operators than can show up for isolated indices, i.e., those from Case (3) of Proposition \ref{degenQ} and Remark \ref{Qdegen} (or Corollary \ref{ngeq4sc}), the determination of commutation of these operators with themselves or with those from Theorem \ref{main} and Remark \ref{otherforms} is more involved. We only remark that in the extended Case (1) of the latter, since its $R_{0}$-polynomial is annihilated by $\partial$, so does the product $\partial\varphi\partial\psi$, so that either $\varphi$ or $\psi$ is symmetric. But then it divides the non-zero scalar $\partial(\varphi\psi)=\mu$, so that the scalar $\partial Q_{0}=b-c-2e$ must be non-zero, and the operators are either the appropriate scalar multiples of $f \mapsto Q_{0}\partial f$, or of $f\mapsto\partial(sQ_{0} \cdot f)$ (with the scalar multiplier determined by $\mu$). These operators are the translations $\pi-(b-c-e)\operatorname{Id}$ and $\pi-e\operatorname{Id}$ of $\pi$. As for commuting with the operators from Proposition \ref{degenT}, with vanishing $T$-polynomials, they can only commute with the operators from the first case in Proposition \ref{comindexi} (by the symmetry of $Q_{0}$ and the vanishing of $\partial R_{0}$), and they indeed commute if and only if the associated polynomial $R_{i}$ from that proposition is symmetric as well.

We conclude by remarking that there are virtually no pairs of families, as in Theorems \ref{main} or \ref{withvanQ0}, where all the operators in one family commute with all the others in the second one. while Proposition \ref{comindexi} deals with the operators of the same index in two families, and Proposition \ref{quadcom} shows that commutativity is trivial when the indices differ by 2 or more, the condition that operators with consecutive indices commute is very restrictive. Indeed, comparing the expressions for $\pi\varpi f$ and $\varpi\pi f$ in the proof of Lemma \ref{expansions} shows, via the coefficients of $s\sigma f$ and $\sigma sf$, that the equality $\pi\varpi=\varpi\pi$ can hold only if either $Q^{0}_{xy}$ with $\widetilde{Q}^{0}_{yz}$ vanish. Then, via Corollary \ref{ngeq4sc}, we obtain the case where one family only involved multiples of $\operatorname{Id}$, which trivially commute with all operators.

\noindent\textsc{Einstein Institute of Mathematics, the Hebrew University of Jerusalem, Edmund Safra Campus, Jerusalem 91904, Israel}

\noindent E-mail address: zemels@math.huji.ac.il

\end{document}